\documentclass[11pt,a4paper]{amsart}
%
% {\`a} utiliser pour dvi
%
\usepackage{amssymb}
% \usepackage[hypertex]{hyperref}
%
% {\`a} utiliser pour pdf
%
\usepackage[dvips,bookmarks,colorlinks=true,linkcolor=blue,urlcolor=red]{hyperref}
%
% Pour en faire un pdf, utiliser dvipdf
% \usepackage[light]{draftcopy}
% \special{!%
%   TeXDict begin %
%     currentdict/@setspecial get%
%     /SAVED@setspecial X% X = exch def
%     /@setspecial{%
%       SAVED@setspecial%
%       /bop-hook{}B% B = bind def
%     }N % N = def end%
% }
%% Ceci permet d'{\'e}viter que \og draftcopy \fg n'affiche \og draft \fg pour chaque
%% dessin postcript inclus dans le texte
%\usepackage{pstricks,pst-plot}
%\usepackage[dvips]{graphicx}
%\usepackage{subfigure,graphicx,caption}
%\usepackage[notref]{showkeys}
%\usepackage[vflt]{floatflt}
%\usepackage{ae,aeguill,frenchle} % si en fran{\c c}ais
%
% \setlength{\topmargin}{-2cm}
% \setlength{\textheight}{25cm}
\setlength{\oddsidemargin}{0.5cm}
\setlength{\evensidemargin}{0.5cm}
\setlength{\textwidth}{15cm}
\numberwithin{equation}{section}
%%%%%%%%%%%%      Commandes Maths     %%%%%%%%%%%%%
%%%%%%%%%%%%%%%%%%%%%%     Commandes AMS       %%%%%%%%%%%%%%%%%%%%%%%%%%%%%%
%

%

%

\newcommand{\D}{\displaystyle}

%
%%%%%%%%%%%%%%%%%%%%%%       Ensembles         %%%%%%%%%%%%%%%%%%%%%%%%%%%%
%
\newcommand{\car}{\mathbf{1}}
\newcommand{\R}{{\mathbb R}}

\newcommand{\Z}{{\mathbb Z}}
\newcommand{\N}{{\mathbb N}}

\newcommand{\esp}{{\mathbb E}}
\newcommand{\pro}{{\mathbb P}}

%
%%%%%%%%%%%%%%%%%%%%%%      Fonctions             %%%%%%%%%%%%%%%%%%%%%%%%%%%%%%
%

%

%

%

%

%

\newcommand{\dist}{{\rm dist}\,}

\newcommand{\vers}{\operatornamewithlimits{\to}}
%
%%%%%%%%%%%%%%%%%%%%%%%%%% Commandes locales %%%%%%%%%%%%%%%%%%%%%%%%%%%%%%%
%

%
%%%%%%%%%%%%      Commandes TeX       %%%%%%%%%%%%%
\theoremstyle{plain}
\newtheorem{Th}{Theorem}[section]
\newtheorem{Le}{Lemma}[section]
\newtheorem{Pro}{Proposition}[section]

\theoremstyle{definition}
\newtheorem{Rem}{Remark}[section]
\title[Asymptotic ergodicity in the localized phase]{Asymptotic
  ergodicity of the eigenvalues of random operators in the localized
  phase}
\author{Fr{\'e}d{\'e}ric Klopp} 
\address[Fr{\'e}d{\'e}ric
Klopp]{LAGA, U.M.R. 7539 C.N.R.S, Institut Galil{\'e}e, Universit{\'e}
  Paris-Nord, 99 Avenue J.-B. Cl{\'e}ment, F-93430 Villetaneuse, France}
\email{\href{mailto:klopp@math.univ-paris13.fr}{klopp@math.univ-paris13.fr}}
\keywords{}
\subjclass{}
\begin{document}
% \french
%
\begin{abstract}
  We prove that, for a general class of random operators, the family
  of the unfolded eigenvalues in the localization region is
  asymptotically ergodic in the sense of N. Minami
  (see~\cite{Mi:11}). N. Minami conjectured this to be the case for
  discrete Anderson model in the localized regime. We also provide a
  local analogue of this result. From the asymptotics ergodicity, one
  can recover the statistics of the level spacings as well as a number
  of other spectral statistics. Our proofs rely on the analysis
  developed in~\cite{Ge-Kl:10}.
  \vskip.5cm\noindent \textsc{R{\'e}sum{\'e}.}  On d{\'e}montre que, pour une
  classe g{\'e}n{\'e}rale d'op{\'e}rateurs al{\'e}a\-toires, les familles valeurs
  propres ``d{\'e}pli{\'e}es'' sont asymptotiquement ergodiques au sens de
  N. Minami (voir~\cite{Mi:11}). N. Minami {\`a} conjectur{\'e} que ceci est
  vrai pour le mod{\`e}le d'Anderson discret dans le r{\'e}gime localis{\'e}.  On
  d{\'e}montre {\'e}galement un r{\'e}sultat analogue pour les valeurs propres
  ``locales''. L'ergodicit{\'e} asymptotique des valeurs propres permet
  alors d'en d{\'e}duire les statistiques des espacements de niveaux ainsi
  que nombre d'autres statistiques spectrales. Nos preuves reposent
  sur l'analyse faite dans~\cite{Ge-Kl:10}.
\end{abstract}
\thanks{The author is partially supported by the grant
  ANR-08-BLAN-0261-01.}
\setcounter{section}{-1}
\maketitle
\section{Introduction}
\label{intro}
On $\ell^2(\Z^d)$, consider the random Anderson model
\begin{equation*}
  H_{\omega}=-\Delta+\lambda V_\omega
\end{equation*}
where
\begin{itemize}
\item $-\Delta$ is the free discrete Laplace operator
  \begin{equation}
    \label{eq:29}
    (-\Delta u)_n=\sum_{|m-n|=1}u_m\quad\text{ for }
    u=(u_n)_{n\in\Z^d}\in\ell^2(\Z^d);
  \end{equation}
\item $V_\omega$ is the random potential
  \begin{equation}
    \label{eq:54}
    (V_\omega u)_n=\omega_n u_n\quad\text{ for }u=(u_n)_{n\in\Z^d}
    \in\ell^2(\Z^d).
  \end{equation}
  We assume that the random variables $(\omega_n)_{n\in\Z^d}$ are
  independent identically distributed and that their common
  distribution admits a compactly supported bounded density, say $g$.
\item The coupling constant $\lambda$ is chosen positive.
\end{itemize}
It is then well known (see e.g.~\cite{MR2509110}) that
\begin{itemize}
\item let $\Sigma:=[-2d,2d]+$supp$\,g$ and $S_-$ and $S_+$ be the
  infimum and supremum of $\Sigma$; for almost every
  $\omega=(\omega_n)_{n\in\Z^d}$, the spectrum of $H_{\omega}$ is
  equal to $\Sigma$;
\item there exists a bounded density of states, say $E\mapsto\nu(E)$,
  such that, for any continuous function $\varphi:\ \R\to\R$, one has
  \begin{equation}
    \label{eq:10}
    \int_\R\varphi(E)\nu(E)dE=
    \mathbb{E}(\langle\delta_0,\varphi(H_\omega)\delta_0\rangle).
  \end{equation}
  Here, and in the sequel, $\mathbb{E}(\cdot)$ denotes the expectation
  with respect to the random parameters, and $\pro(\cdot)$ the
  probability measure they induce.\\
  Let $N$ be the integrated density of states of $H_\omega$ i.e. $N$
  is the distribution function of the measure $\nu(E)dE$. The function
  $\nu$ is only defined $E$-almost everywhere. In the sequel, when we
  speak of $\nu(E)$ for some $E$, we mean that the non decreasing
  function $N$ is differentiable at $E$ and that $\nu(E)$ is its
  derivative at $E$.
\end{itemize}
For $L\in\N$, let $\Lambda=\Lambda_L=[-L,L]^d$ be a large box and
$|\Lambda|:=\#\Lambda=(2L+1)^d$ be its cardinality. Let
$H_{\omega}(\Lambda)$ be the operator $H_\omega$ restricted to
$\Lambda$ with periodic boundary conditions. The notation
$|\Lambda|\to+\infty$ is a shorthand for considering
$\Lambda=\Lambda_L$ in the limit $L\to+\infty$. Let us denote the
eigenvalues of $H_{\omega}(\Lambda)$ ordered increasingly and repeated
according to multiplicity by $E_1(\omega,\Lambda)\leq
E_2(\omega,\Lambda)\leq \cdots\leq E_{|\Lambda|}(\omega,\Lambda)$.
\par For $t\in[0,1]$, consider the following point process
\begin{equation}
  \label{eq:13}
  \Xi(\omega,t,\Lambda) = 
  \sum_{n=1}^{|\Lambda|} \delta_{|\Lambda|[N(E_n(\omega,\Lambda))-t]}.
\end{equation}
We prove
\begin{Th}
  \label{thr:1}
  For sufficiently large coupling constant $\lambda$, $\omega$-almost
  surely, when $|\Lambda|\to+\infty$, the probability law of the point
  process $\Xi(\omega,\cdot,\Lambda)$ under the uniform distribution
  $\car_{[0,1]}(t)dt$ converges to the law of the Poisson point
  process on the real line with intensity $1$.
\end{Th}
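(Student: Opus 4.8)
The statement is a "random-choice-of-$t$" version of Poisson statistics: instead of fixing an energy and rescaling, we pick a uniformly random offset $t\in[0,1]$ and watch the whole unfolded spectrum through the window $|\Lambda|[N(E_n)-t]$. Averaging over $t$ is a smoothing device: it should kill correlations between distinct eigenvalues exactly the way averaging over energy does in the classical Minami-type arguments, but here it does so $\omega$-almost surely because the randomness in $t$ does the averaging rather than the randomness in $\omega$.

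Let me think about how I would set this up.

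**Step 1: Reduce to a localized set of eigenvalues.** In the localized regime (large $\lambda$), the eigenvalues $E_n(\omega,\Lambda)$ come with exponentially localized eigenfunctions, with localization centers $x_n(\omega,\Lambda)\in\Lambda$. The key structural fact from [Ge-Kl:10] is that the spectral statistics in a window are governed by a decoupling: eigenvalues whose localization centers are far apart behave independently, and in a box of size $|\Lambda|$ the number of eigenvalues in a window of width $O(1/|\Lambda|)$ is $O(1)$, with their centers typically at mutual distance $\sim|\Lambda|^{1/d}$. So I'd first invoke the machinery of [Ge-Kl:10]: a good set of configurations $\omega$ (of full measure, or measure $\to 1$ fast enough for Borel–Cantelli along a subsequence and then interpolation) on which the eigenvalue/eigenfunction pairs admit this description.

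**Step 2: Compute the Laplace functional of $\Xi(\omega,\cdot,\Lambda)$ under $dt$.** To prove convergence to Poisson intensity $1$, compute, for a nice test function $f\geq 0$ (say continuous, compactly supported, or $f$ a finite sum $\sum \theta_j \mathbf{1}_{I_j}$), the quantity
$$
\mathcal{L}_\Lambda(f;\omega) := \int_0^1 \exp\!\Big(-\sum_{n=1}^{|\Lambda|} f\big(|\Lambda|[N(E_n(\omega,\Lambda))-t]\big)\Big)\,dt .
$$
Change variables or just observe: as $t$ ranges over $[0,1]$, the point $|\Lambda|[N(E_n)-t]$ sweeps across $\mathbb{R}$; since $N$ is (essentially) the distribution function, the images $\{N(E_n)\}_{n}$ are roughly equispaced with spacing $1/|\Lambda|$ — in fact, by the definition of unfolding, the unfolded points $|\Lambda| N(E_n)$ have empirical density $\approx 1$. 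For a single bump $f = \theta\,\mathbf{1}_{[a,b]}$ of width $\ell=b-a$, the $t$-Lebesgue measure of $t$'s for which a given $E_n$ contributes is $\ell/|\Lambda|$; the expected number of contributing $n$'s is therefore $\ell$ (this is exactly why the intensity is $1$). The whole game is to show that the contributions from different $n$ become asymptotically independent, so that the sum in the exponent behaves like a Poisson random sum and $\mathcal{L}_\Lambda(f;\omega)\to\exp(-\int(1-e^{-f(x)})dx)$.

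**Step 3: Decorrelation via [Ge-Kl:10].** Here's the heart. Split the energy axis near the window into small pieces. Using the localization / Wegner / Minami estimates packaged in [Ge-Kl:10], show: (a) with overwhelming probability no more than one eigenvalue of $H_\omega(\Lambda)$ falls into any sub-window of size $\sim (\log|\Lambda|)^C/|\Lambda|$ (Minami), so "collisions" in $t$-space are negligible; (b) the finitely many eigenvalues that can be seen through a fixed window have localization centers that are far apart, hence — again by [Ge-Kl:10] — their joint distribution factorizes up to an error $o(1)$ as $|\Lambda|\to\infty$; (c) crucially, these estimates are quantitative enough (errors summable or summable along $L_k = k^\alpha$) to upgrade "in probability" to "$\omega$-almost surely" by Borel–Cantelli plus monotonicity/continuity interpolation between the subsequence values of $L$. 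Then the $t$-average, which is deterministic given $\omega$, directly produces the Poisson Laplace functional: each eigenvalue independently contributes a "mark" drawn from the window with the right normalization, and a Poisson-limit computation (the classical $(1-\lambda/n)^n\to e^{-\lambda}$ mechanism, or a direct second-moment/Chen–Stein bound) closes the argument.

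**Main obstacle.** The genuinely hard step is Step 3(c): converting the convergence-in-$\omega$-probability statements of [Ge-Kl:10] into an almost-sure statement for the $t$-averaged process. One must control the fluctuations of $\mathcal{L}_\Lambda(f;\omega)$ in $\omega$ — e.g. show $\mathbb{E}\big(|\mathcal{L}_\Lambda(f;\omega) - \mathbb{E}\mathcal{L}_\Lambda(f;\omega)|^2\big)$ is small and summable along a subsequence $L_k$, using the independence of the potential in distant boxes (the localization centers being far apart is what makes the relevant random variables nearly independent), and then interpolate in $L$. A secondary technical nuisance is that $N$ is only defined through its derivative a.e.\ and the unfolding $E\mapsto N(E)$ may be flat on spectral gaps; one restricts attention to the region where $\nu(E)>0$ (the localization region), and checks that the contribution of bad energies to the $t$-average is negligible. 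I expect the bookkeeping — matching the length scales in the Minami estimate, the localization length, and the subsequence $L_k$ — to be where most of the work goes, but the conceptual engine is entirely borrowed from [Ge-Kl:10].
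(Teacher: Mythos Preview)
Your proposal is correct and follows essentially the same route as the paper: reduce to the Laplace functional $\mathcal{L}_\Lambda(\varphi;\omega)$, partition the energy interval into small pieces on which the i.i.d.\ approximation theorem of~\cite{Ge-Kl:10} applies, compute first and second moments in $\omega$ of the Laplace functional on each piece (your Step~3(c)), sum a second-moment bound along a polynomial subsequence $L_k$ to get almost-sure convergence via Borel--Cantelli, and finally interpolate in $L$. You have also correctly flagged the two technical nuisances the paper deals with explicitly---the flat pieces of $N$ (handled by a preliminary reduction, Lemma~\ref{le:1}) and the interpolation between subsequence values (Lemma~\ref{le:8}). The only place your sketch is slightly imprecise is Step~3(b): the paper does not argue directly that ``eigenvalues in a fixed window have far-apart localization centers, hence their joint law factorizes''; rather it invokes the structural Theorem~\ref{thr:vbig1} of~\cite{Ge-Kl:10}, which replaces the eigenvalues in each small energy interval $J_{j,\Lambda}$ by the eigenvalues of restrictions to disjoint sub-cubes, these being i.i.d.\ by the (IAD) assumption---but this is exactly the mechanism you have in mind.
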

\noindent This proves in particular a conjecture by N.~Minami
(see~\cite{Mi:06,Mi:11}); a weaker version of Theorem~\ref{thr:1},
namely,
$L^2$-conver\-gence in $\omega$ when $d=1$, is proved in~\cite{Mi:11}.\\
Theorem~\ref{thr:1}, in particular, implies the convergence of the
level spacings statistics already obtained for this model under more
restrictive assumptions in~\cite{Ge-Kl:10} (see also
Theorem~\ref{thr:7} in the present paper for more details). Indeed, in
Theorem~\ref{thr:1}, we do not make any regularity assumption on the
distribution of the random variables except for their having a common
bounded compactly supported
density.\\
Actually, Theorem~\ref{thr:1} is a prototype of the general result we
state and prove below. Essentially, we prove that the claim in
Theorem~\ref{thr:1} holds in the localization region for any random
Hamiltonian satisfying a Wegner and a Minami estimate (see assumptions
(W) and (M) in section~\ref{sec:results}). To do so, we use the
analysis made in~\cite{Ge-Kl:10}; in particular, our analysis relies
on one of the approximation theorems proved in~\cite{Ge-Kl:10},
namely, Theorem~1.16.
\section{The results}
\label{sec:results}
Consider $H_\omega=H_0+V_\omega$, a $\Z^d$-ergodic random
Schr{\"o}\-dinger operator on $\mathcal{H}=L^2(\R^d)$ or $\ell^2(\Z^d)$
(see e.g.~\cite{MR94h:47068,MR1935594}). Typically, the background
potential $H_0$ is the Laplacian $-\Delta$, possibly perturbed by a
periodic potential. Magnetic fields can be considered as well; in
particular, the Landau Hamiltonian is also admissible as a background
Hamiltonian. For the sake of simplicity, we assume that $V_\omega$ is
almost surely bounded; hence, almost surely, $H_\omega$ have the same
domain $H^2(\R^d)$ or $\ell^2(\Z^d)$.
\subsection{The setting and the assumptions}
\label{sec:setting-assumptions}
For $\Lambda$, a cube in either $\R^d$ or $\Z^d$, we let
$H_\omega(\Lambda)$ be the self-adjoint operator $H_\omega$ restricted
to $\Lambda$ with periodic boundary conditions. As in~\cite{Ge-Kl:10},
our analysis stays valid for Dirichlet boundary conditions.\\
Furthermore, we shall denote by $\car_J(H)$ the spectral projector of
the operator $H$ on the energy interval $J$. $\esp(\cdot)$ denotes the
expectation with respect to $\omega$.\\
Our first assumption will be an independence assumption for local
Hamiltonians that are far away from each other, that is,
\begin{description}
\item[(IAD)] There exists $R_0>0$ such that for any two cubes
  $\Lambda$ and $\Lambda'$ such that dist$(\Lambda,\Lambda')>R_0$, the
  random Hamiltonians $H_\omega(\Lambda)$ and $H_\omega(\Lambda')$ are
  stochastically independent.
\end{description}
\begin{Rem}
  This assumption may be relaxed to assume that the correlation
  between the random Hamiltonians $H_\omega(\Lambda)$ and
  $H_\omega(\Lambda')$ decays sufficiently fast as
  dist$(\Lambda,\Lambda')\to+\infty$. We refer to~\cite{Ge-Kl:10} for
  more details.
\end{Rem}
Let $\Sigma$ be the almost sure spectrum of $H_\omega$. Pick $I$ a
relatively compact open subset of $\Sigma$. Assume the following
holds:
\begin{description}
\item[(W)] a Wegner estimate holds in $I$, i.e. there exists $C>0$
  such that, for $J\subset I$, and $\Lambda$, a cube in $\R^d$ or
  $\Z^d$, one has
  \begin{equation}
    \label{eq:1}
    \esp\left[\text{tr}(\car_J(H_\omega(\Lambda)))
    \right]\leq C |J|\,|\Lambda| .
  \end{equation}
\item[(M)] a Minami estimate holds in $I$, i.e. there exists $C>0$ and
  $\rho>0$ such that, for $J\subset I$, and $\Lambda$, a cube in
  $\R^d$ or $\Z^d$, one has
  \begin{equation}
    \label{eq:2}
    \esp\left[\text{tr}(\car_J(H_\omega(\Lambda)))
      \cdot[\text{tr}(\car_J(H_\omega(\Lambda)))-1]\right]\leq
    C (|J|\,|\Lambda|)^{1+\rho}.
  \end{equation}
\end{description}
\begin{Rem}
  \label{rem:1}
  The Wegner estimate (W) has been proved for many random
  Schr{\"o}\-dinger models e.g. for both discrete and continuous Anderson
  models under rather general conditions on the single site potential
  and on the randomness (see
  e.g.~\cite{MR2509108,MR2509110,MR2307751,MR2378428}) but also for
  other models (see e.g.~\cite{MR2658987,1007.2483}). The right hand
  side in~\eqref{eq:1} can be lower bounded by the probability to have
  at least one eigenvalue in $J$ (for $J$
  small).\\
  Weaker forms of assumption (W) i.e. when the right hand side is
  replaced with $C |J|^\alpha\,|\Lambda|^\beta$ for some
  $\alpha\in(0,1]$ and $\beta\geq1$, are known to hold also for some
  non monotonous models (see
  e.g.~\cite{MR95m:82080,MR1934351,MR2423576}). This is sufficient for
  our proofs to work if one additionnally knows that the integrated
  density of states is absolutely continuous.  \vskip.2cm
  \noindent On the Minami estimate (M), much less is known: in any
  dimension, it holds for the discrete Anderson model with $I=\Sigma$
  (see~\cite{MR97d:82046,MR2290333,MR2360226,MR2505733}). For the
  continuous Anderson model in any dimension, in~\cite{MR2663411}, it
  is shown to hold at the bottom of the spectrum under more
  restrictive conditions on the single site potential than needed to
  prove the Wegner estimate (W). These proofs yield an optimal
  exponent $\rho=1$. The right hand side in~\eqref{eq:2} can be lower
  bounded by the probability to have at least two eigenvalues in $J$.
  So, (M) can be interpreted as a measure of the independence of close
  by eigenvalues.
\end{Rem}
The integrated density of states is defined as
\begin{equation}
  \label{eq:83}
  N(E):=\lim_{|\Lambda|\to+\infty}\frac{\#\{\text{e.v. of
    }H_\omega(\Lambda)\text{ less than E}\}}{|\Lambda|}
\end{equation}
By (W), $N(E)$ is the distribution function of a measure that is
absolutely continuous with respect to to the Lebesgue measure on
$\R$. Let $\nu$ be the density of state of $H_\omega$ i.e. the
distributional derivative of $N$. In the sequel, for a set $I$,
$|N(I)|$ denotes the Lebesgue measure of $N(I)$ i.e.
$\D|N(I)|=\int_I\nu(E)dE$.\vskip.1cm\noindent
Let us now describe what we call the localized regime in the
introduction. For $L\geq1$, $\Lambda_L$ denotes the cube
$[-L/2,L/2]^d$ in either $\R^d$ or $\Z^d$. In the sequel, we write
$\Lambda$ for $\Lambda_L$ i.e. $\Lambda=\Lambda_L$ and when we write
$|\Lambda|\to+\infty$, we mean $L\to+\infty$.\\
Let $\mathcal{H}_\Lambda$ be $\ell^2(\Lambda\cap\Z^d)$ in the discrete
case and $L^2(\Lambda)$ in the continuous one. For a vector
$\varphi\in\mathcal{H}$, we define
\begin{equation}
  \label{eq:22}
  \|\varphi\|_x=
  \begin{cases}
    \|\car_{\Lambda(x)}\varphi\|_2\text{ where }\Lambda(x)=\{y;\
    |y-x|\leq1/2\}&\quad\text{
      if }\mathcal{H}=L^2(\R^d),\\
    \quad\quad|\varphi(x)|&\quad\text{ if }\mathcal{H}=\ell^2(\Z^d).
  \end{cases}
\end{equation}
Let $I$ be a compact interval. We assume that $I$ lies in the region
of complete localization (see e.g.~\cite{MR2078370,MR2203782}) for
which we use the following finite volume version:
\begin{description}
\item[(Loc)] for all $\xi\in(0,1)$, one has
  \begin{equation}
    \label{eq:84}
    \sup_{L>0}\,\sup_{\substack{\text{supp} f\subset I \\ |f|\leq1}}\,
    \esp\left(\sum_{\gamma\in\Z^d} e^{|\gamma|^\xi}\,
      \|\car_{\Lambda(0)}f(H_\omega(\Lambda_L))
      \car_{\Lambda(\gamma)}\|_2\right)<+\infty.
  \end{equation}
\end{description}
\begin{Rem}
  \label{rem:2}
  Such a region of localization has been shown to exist and described
  for many random models (see
  e.g.~\cite{MR2203782,MR2002h:82051,MR2207021,MR1935594,MR95m:82080,MR1934351,MR2423576,MR2658987,1007.2483});
  a fairly recent review can be found in~\cite{MR2509110}; other
  informational texts include~\cite{MR94h:47068,MR2078370}.\\
  Once a Wegner estimate is known (though it is not an absolute
  requirement see e.g.~\cite{MR2180453,MR2314108,MR2352267}), the
  typical regions where localization holds are vicinities of the edges
  of the spectrum. One may have localization over larger regions (or
  the whole) of the spectrum if the disorder is large like in
  Theorem~\ref{thr:1}.\\
  This assumption (Loc) may be relaxed; we refer to Remark 1.3
  of~\cite{Ge-Kl:10} for more details.
\end{Rem}
For $L\in\N$, recall that $\Lambda=\Lambda_L$ and that
$H_{\omega}(\Lambda)$ is the operator $H_\omega$ restricted to
$\Lambda$ with periodic boundary conditions. The notation
$|\Lambda|\to+\infty$ is a shorthand for considering
$\Lambda=\Lambda_L$ in the limit $L\to+\infty$.\\
Finally, let $E_1(\omega,\Lambda)\leq E_2(\omega,\Lambda)\leq
\cdots\leq E_N(\omega,\Lambda)\leq\cdots $ denote the eigenvalues of
$H_\omega(\Lambda)$ ordered increasingly and repeated according to
multiplicity.\vskip.2cm
\noindent We state our results in two cases. In the first case
described in section~\ref{sec:macr-energy-interv}, we consider a
macroscopic energy interval i.e. the energy interval in which we study
the eigenvalues is a fixed compact interval where all the above
assumptions hold. In the second case described in
section~\ref{sec:micr-energy-interv}, the energy interval shrinks to a
point but not too fast so as to contain enough
eigenvalues that is asymptotically infinitely many eigenvalues.\\
We also consider another point of view on the random
Hamiltonian. Namely, under assumption (Loc), in $I$, one typically
proves that the spectrum is made only of eigenvalues and that to these
eigenvalues, one associates exponentially decaying eigenfunctions
(exponential or Anderson localization) (see
e.g.~\cite{MR94h:47068,MR2078370,MR2203782,MR2509110}). One can then
enumerate these eigenvalues in an energy interval by considering only
those with localization center (i.e. with most of their mass) in some
cube $\Lambda$ and study the thus obtained process. This is done in
section~\ref{sec:results-full-random}.
\subsection{Macroscopic energy intervals}
\label{sec:macr-energy-interv}
For $J=[a,b]$ a compact interval such that $N(b)-N(a)=|N(J)|>0$ and a
fixed configuration $\omega$, consider the point process
\begin{equation}
  \label{eq:6}
  \Xi_J(\omega,t,\Lambda) = 
  \sum_{E_n(\omega,\Lambda)\in J}
  \delta_{|N(J)||\Lambda|[N_J(E_n(\omega,\Lambda))-t]}
\end{equation}
under the uniform distribution in $[0,1]$ in $t$; here we have set
\begin{equation}
  \label{eq:7}
  N_J(\cdot):=\frac{N(\cdot)-N(a)}{N(b)-N(a)}=\frac{N(\cdot)-N(a)}{|N(J)|}.  
\end{equation}
Our main result is
\begin{Th}
  \label{thr:2}
  Assume (IAD), (W), (M) and (Loc) hold. Assume that $J\subset I$, the
  localization region, is such that $|N(J)|>0$.\\
  Then, $\omega$-almost surely, the probability law of the point
  process $\Xi_J(\omega,\cdot,\Lambda)$ under the uniform distribution
  $\car_{[0,1]}(t)dt$ converges to the law of the Poisson point
  process on the real line with intensity $1$.
\end{Th}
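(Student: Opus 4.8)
The plan is to reduce the statement to a convergence-in-distribution result for a point process on the probability space $([0,1], \car_{[0,1]}(t)dt)$ and then to apply the approximation Theorem~1.16 of~\cite{Ge-Kl:10}. First I would recall that, under (IAD), (W), (M) and (Loc), the analysis of~\cite{Ge-Kl:10} shows that the eigenvalues of $H_\omega(\Lambda)$ in $J$ that are ``close together'' (within a window of size comparable to $1/|\Lambda|$ in unfolded coordinates) come, with overwhelming probability, from disjoint sub-cubes of $\Lambda$ of side length $\ell = \ell(L)$ chosen so that $\ell \to \infty$ but $\ell/L \to 0$; the Minami estimate (M) guarantees each such sub-cube contributes at most one eigenvalue in the window with probability $1 - o(1)$, and the Wegner estimate (W) controls the total number of eigenvalues. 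This gives an approximation of $\Xi_J(\omega,\cdot,\Lambda)$, as a point process in the variable $t$, by a superposition of independent contributions, one per sub-cube, each of which is asymptotically a single point placed at a location that is asymptotically uniform on $[0,1]$ (after the unfolding by $N_J$).

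The second step is the probabilistic heart of the argument: a superposition of $k_L \to \infty$ independent point processes, each consisting of (at most) one point, each point being uniformly distributed on a scale that, after multiplication by $|N(J)||\Lambda|$, is a Poisson-like thinning, converges to the Poisson point process of intensity $1$ provided (a) the expected number of points in any bounded interval converges to the Lebesgue measure of that interval, and (b) the probability that any single sub-cube contributes two or more points in a bounded interval is negligible in the sum. Point (a) follows from the definition of $N$ and $N_J$ together with (W): the normalization in~\eqref{eq:6}–\eqref{eq:7} is precisely chosen so that the intensity is $1$. Point (b) is exactly where the Minami estimate (M) enters, via its role in Theorem~1.16 of~\cite{Ge-Kl:10}. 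I would invoke the classical criterion for convergence to a Poisson process (convergence of avoidance functions / Laplace functionals, as in Kallenberg's theorem) to conclude.

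The delicate point — and the one I expect to be the main obstacle — is the almost-sure-in-$\omega$ nature of the conclusion. The randomness in $t$ (the uniform distribution on $[0,1]$) does the ``averaging'' that a single eigenvalue cannot do, so one is really proving that for a.e.\ fixed $\omega$, the empirical distribution over $t \in [0,1]$ of the configuration $\Xi_J(\omega,t,\Lambda)$ converges weakly to the law of the Poisson process. This requires showing that the error terms in the approximation of~\cite{Ge-Kl:10} can be made summable in $L$ along a subsequence (say $L = L_n$ with $L_n$ growing polynomially), so that a Borel--Cantelli argument yields the a.s.\ convergence along the subsequence, and then interpolating between consecutive $L_n$ using monotonicity/continuity estimates on $N$ and on the eigenvalue counting functions to fill in all $L$. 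One must check that the exceptional $\omega$-set coming from (Loc) (the tail of the localization sum) and from the Wegner and Minami bounds has probability zero after this Borel--Cantelli step; this is where uniformity in $J \subset I$ and the precise form of the estimates~\eqref{eq:1}, \eqref{eq:2}, \eqref{eq:84} must be used carefully.

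Finally I would assemble the pieces: fix $\omega$ outside the exceptional null set; for such $\omega$ and all large $L$, the point process $\Xi_J(\omega,\cdot,\Lambda)$ is within a controlled error (in the sense of the relevant metric on laws of point processes) of the superposition of independent single-point processes described above; that superposition converges to the intensity-$1$ Poisson point process by the Poisson limit theorem; hence $\Xi_J(\omega,\cdot,\Lambda)$ converges in law to the same limit. This completes the proof of Theorem~\ref{thr:2}.
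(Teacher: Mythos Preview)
Your proposal follows the same overall architecture as the paper: reduce to convergence of Laplace functionals, use the box--decomposition approximation from~\cite{Ge-Kl:10} to replace the eigenvalues by an i.i.d.\ array, apply a Poisson limit computation, and upgrade to $\omega$-almost sure convergence via Borel--Cantelli along a polynomial subsequence together with an interpolation step to fill in the remaining scales. You have correctly identified the delicate point (the almost-sure-in-$\omega$ conclusion) and the mechanism that resolves it, and these are exactly Lemmas~\ref{le:2} and~\ref{le:8} in the paper.

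There is one step you have elided that the paper treats as essential. The approximation theorem you invoke (Theorem~1.16 of~\cite{Ge-Kl:10}, recalled here as Theorem~\ref{thr:vbig1}) applies only to intervals $I_\Lambda$ with $|N(I_\Lambda)|\to 0$ satisfying the admissibility conditions~\eqref{eq:69}; it cannot be applied directly to the macroscopic interval $J$. The paper therefore first partitions $J$ into sub-intervals $J_{j,\Lambda}$ each of IDS-weight $|\Lambda|^{-\alpha}$ for a carefully chosen $\alpha\in(0,1)$, shows (equation~\eqref{eq:73}) that the Laplace functional over $J$ decomposes, up to a power-of-$|\Lambda|$ error, as a convex combination of the functionals over the $J_{j,\Lambda}$, discards the ``bad'' indices where the density of states is too thin relative to the Minami exponent, and only then runs the i.i.d.\ approximation interval by interval. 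Your phrase ``within a window of size comparable to $1/|\Lambda|$'' gestures toward this localization in energy, but the description ``a superposition of independent contributions, one per sub-cube, each asymptotically a single point'' is not correct for the full $J$: each small cube $\Lambda_\ell(\gamma)$ carries $\asymp |N(J)|\,\ell^d\to\infty$ eigenvalues in $J$, so the Bernoulli (at-most-one) structure only appears \emph{after} the energy partition. Once that partition and the accompanying bookkeeping are in place, the remainder of your outline matches the paper's proof.
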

\noindent First, let us note that Theorem~\ref{thr:1} is an immediate
consequence of Theorem~\ref{thr:2} as it is well known that, for the
discrete Anderson model at large disorder, the whole spectrum is
localized in the sense of (Loc) (see e.g.~\cite{MR2509110}).\\
A number of spectral statistics for the sequence of unfolded
eigenvalues are immediate consequences of Theorem~\ref{thr:2} and the
results of~\cite{MR2352280}. For example, by Proposition 4.4
of~\cite{MR2352280}, it implies the convergence of the empirical
distribution of unfolded level spacings to $e^{-x}$
(see~\cite{MR2352280,Mi:11,Ge-Kl:10}). We refer to~\cite{MR2352280}
for more results on the statistics of asymptotically ergodic sequences.\\
As in~\cite{Ge-Kl:10}, one can also study the statistics of the levels
themselves i.e. before unfolding. Using classical results on
transformations of point processes (see~\cite{MR1700749,MR2364939})
and the fact that $N$ is Lipschitz continuous and increasing, one
obtains
\begin{Th}
  \label{thr:8}
  Assume (IAD), (W), (M) and (Loc) hold. Assume that $J=[a,b]\subset
  I$ is a compact interval in the localization region satisfying $|N(J)|>0$.\\
  Define
  \begin{itemize}
  \item the probability density
    $\D\nu_J:=\frac{1}{|N(J)|}\nu(t)\car_J(t)$ where
    $\D\nu=\frac{dN}{dE}$ is the density of states of $H_\omega$;
  \item the point process $\D\tilde\Xi_J(\omega,t,\Lambda) =
    \sum_{E_n(\omega,\Lambda)\in J}
    \delta_{\nu(t)|\Lambda|[E_n(\omega,\Lambda)-t]}$.
  \end{itemize}
  Then, $\omega$-almost surely, the probability law of the point
  process $\tilde\Xi_J(\omega,\cdot,\Lambda)$ under the distribution
  $\D\nu_J(t)dt$ converges to the law of the Poisson point process on
  the real line with intensity $1$.
\end{Th}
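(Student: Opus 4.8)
The plan is to deduce Theorem~\ref{thr:8} from Theorem~\ref{thr:2} by pushing the convergence statement forward through the unfolding map and its inverse. Recall that the point process $\Xi_J(\omega,\cdot,\Lambda)$ of Theorem~\ref{thr:2} lives on the unfolded scale obtained by applying the map $E\mapsto N_J(E)=(N(E)-N(a))/|N(J)|$ and then rescaling by $|N(J)||\Lambda|$; the process $\tilde\Xi_J(\omega,\cdot,\Lambda)$ of Theorem~\ref{thr:8} instead keeps the energies on their natural scale and rescales locally by $\nu(t)|\Lambda|$ around the reference point $t$. Since $N$ is Lipschitz and strictly increasing on $J$ (strict monotonicity near energies where $\nu>0$, which is where the reference point $t$ effectively ranges under the measure $\nu_J(t)\,dt$), the unfolding map $N_J$ is a bi-Lipschitz homeomorphism of $J$ onto $[0,1]$, and its inverse is differentiable at $N_J(t)$ with derivative $1/(|N(J)|\nu(t))$ precisely when $\nu(t)$ exists and is positive.

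The key steps, in order, are as follows. First, I would record the abstract statement on transformations of point processes from~\cite{MR1700749,MR2364939}: if a family of point processes converges in law to a Poisson process of intensity one, and one applies a fixed $C^1$-diffeomorphism with nonvanishing derivative, then the image processes converge in law to the Poisson process whose intensity is the push-forward; in particular, after the appropriate local rescaling one again gets intensity one. Here, however, the "diffeomorphism" is $N_J^{-1}$, which need only be differentiable $\nu_J$-almost everywhere at the reference point — but this is exactly the regime covered by the Minami-style notion of asymptotic ergodicity, where the reference point $t$ is itself averaged against a probability density. So the second step is to observe that, because the statement of Theorem~\ref{thr:2} is an assertion about the \emph{averaged} law of $\Xi_J(\omega,t,\Lambda)$ with $t$ uniform on $[0,1]$, applying the change of variables $t=N_J(s)$, $dt=|N(J)|\nu(s)\,ds\,\car_J(s)$, turns the uniform average on $[0,1]$ into the average against $\nu_J(s)\,ds$ on $J$; this is precisely the measure appearing in Theorem~\ref{thr:8}. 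The third step is to check that under this substitution the scaling factor $|N(J)||\Lambda|[N_J(E_n)-t]$ becomes, to leading order as $|\Lambda|\to+\infty$, the factor $\nu(s)|\Lambda|[E_n-s]$ of $\tilde\Xi_J$: indeed $N_J(E_n)-N_J(s)=N_J'(s)(E_n-s)+o(|E_n-s|)$ with $N_J'(s)=\nu(s)/|N(J)|$ at points of differentiability, and since the relevant eigenvalues $E_n$ lie within $O(1/|\Lambda|)$ of $s$ on the Poisson scale, the error term is $o(1)$ and does not affect the limiting process. Finally, I would invoke the continuity of the map sending a point configuration to its image under a near-affine perturbation, together with the fact that the limit is the same Poisson process, to conclude that $\tilde\Xi_J(\omega,\cdot,\Lambda)$ under $\nu_J(t)\,dt$ converges in law to the intensity-one Poisson process, $\omega$-almost surely.

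The main obstacle is making rigorous the passage from "$\Xi_J$ converges under $t$ uniform" to "$\tilde\Xi_J$ converges under $\nu_J(t)\,dt$" while controlling two sources of error simultaneously: the nonlinearity of $N$ (handled by its differentiability $\nu$-a.e. and Lipschitz continuity, which bounds the second-order term uniformly) and the fact that $\nu$ may vanish on part of $J$ or fail to exist on a Lebesgue-null set (handled because $\nu_J(t)\,dt$ assigns no mass to $\{\nu=0\}$ and no mass to the null set of non-differentiability, so these exceptional sets are invisible to the averaged statement). Concretely one fixes a bounded continuous test functional $F$ on the space of point configurations, writes $\esp_{t\sim\mathrm{Unif}[0,1]}[F(\Xi_J(\omega,t,\Lambda))]$, substitutes $t=N_J(s)$, and must show this equals $\esp_{s\sim\nu_J}[F(\tilde\Xi_J(\omega,s,\Lambda))]+o(1)$; the $o(1)$ comes from replacing $N_J(E_n)-N_J(s)$ by its linearization, which is legitimate by dominated convergence once one notes that $F$ depends only on points in a fixed bounded window and that, by the Wegner estimate (W), the number of eigenvalues $E_n$ contributing in that window is tight. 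Everything else — the convergence itself, the almost-sure character, the identification of the limit — is then inherited verbatim from Theorem~\ref{thr:2}.
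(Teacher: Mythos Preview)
Your proposal is correct and follows essentially the same route as the paper: change variables $t=N_J(s)$ to turn the uniform law on $[0,1]$ into $\nu_J(s)\,ds$ on $J$, and then use the a.e.\ differentiability of $N$ to show that the two point processes are related by a transformation converging to the identity. The only packaging difference is that the paper encodes this last step as an explicit sequence of measurable maps $\chi_{\omega,\Lambda}$ on $\mathcal{M}_p(\R)$ (recovering $t$ from the point configuration itself via a formula in the eigenvalues) and then invokes the extended continuous mapping theorem~\cite[Theorem~5.5]{MR1700749}, whereas you propose to work directly at the level of test functionals and dominated convergence in $s$; the analytic content is identical.
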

\noindent We note that, in Theorem~\ref{thr:8}, we don't make any
regularity assumption on $N$ except for the Wegner estimate. This
enables us to remove the regularity condition imposed on the density
of states $\nu$ in the proof of the almost sure convergence of the
level spacings statistics given in~\cite{Ge-Kl:10}.  Thus, we prove
\begin{Th}
  \label{thr:7}
  Assume (IAD), (W), (M) and (Loc) hold.  Pick $J\subset I$ a compact
  interval in the localization region such that $|N(J)|>0$. Let
  $N(J,\omega,\Lambda)$ be the random number of eigenvalues of
  $H_\omega(\Lambda)$ is $J$. Define the eigenvalue or level spacings
  as
  \begin{equation*}
    \forall 1\leq j\leq N(J,\omega,\Lambda),\quad 
    \delta_J E_j(\omega,\Lambda)= \frac{|N(J)|}{|J|} |\Lambda|
    (E_{j+1}(\omega,\Lambda)-E_j(\omega,\Lambda))\geq 0
  \end{equation*}
  and the empirical distribution of these spacings to be the random
  numbers, for $x\geq0$
  \begin{equation*}
    DLS(x;J,\omega,\Lambda)=\frac{\#\{j;\ E_j(\omega,\Lambda)\in
      J,\ \delta_J E_j(\omega,\Lambda)\geq x\}
    }{N(J,\omega,\Lambda)}.
  \end{equation*}
  Then, $\omega$-almost surely, as $|\Lambda|\to+\infty$,
  $DLS'(x;J,\omega,\Lambda)$ converges uniformly to the distribution
  $x\mapsto g_{\nu,J}(x)$ where $\D g_{\nu,J}(x)=
  \int_{J}e^{-\nu_J(\lambda)|J|x}\nu_J(\lambda) d\lambda$.
\end{Th}
\subsection{Microscopic energy intervals}
\label{sec:micr-energy-interv}
One can also prove a version of Theorem~\ref{thr:2} that is local in
energy. In this case, one needs that the weight the density of states
puts on the energy interval under consideration not be too small with
respect to the length of the energy interval (see the first condition
in~\eqref{eq:63}). One proves
\begin{Th}
  \label{thr:3}
  Assume (IAD), (W), (M) and (Loc) hold. Pick $E_0\in I$.\\
  Fix $(I_\Lambda)_\Lambda$ a decreasing sequence of intervals such
  that $\displaystyle\sup_{I_\Lambda}|x|\vers_{|\Lambda|\to+\infty}0$.
  Assume that, for some $\delta>0$ and
  $\tilde\rho\in(0,\rho/(1+(1+\rho)d))$ (recall that $\rho$ is defined
  in (M)), one has
  \begin{gather}
    \label{eq:63}
    N(E_0+I_\Lambda)\cdot|I_\Lambda|^{-1-\tilde\rho}\geq1,\quad
    |\Lambda|^{1-\delta}\cdot N(E_0+I_\Lambda) \vers_{|\Lambda|\to+\infty}
    +\infty\\
    \intertext{ and }
    \label{eq:64}
    \text{if }\ell'=o(L)\text{ then
    }\frac{N(E_0+I_{\Lambda_{L+\ell'}})}{N(E_0+I_{\Lambda_L})}
    \vers_{L\to+\infty}1.
  \end{gather}
  Then, $\omega$-almost surely, the probability law of the point
  process $\Xi_{E_0+I_\Lambda}(\omega,\cdot,\Lambda)$ under the
  uniform distribution $\car_{[0,1]}(t)dt$ converges to the law of the
  Poisson point process on the real line with intensity $1$.
\end{Th}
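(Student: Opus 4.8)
The plan is to follow the same strategy as for Theorem~\ref{thr:2}, but with a careful bookkeeping of the size of the energy window $I_\Lambda$ and the corresponding density-of-states weight $N(E_0+I_\Lambda)$. The starting point is the approximation Theorem~1.16 of~\cite{Ge-Kl:10}: under (IAD), (W), (M) and (Loc), the eigenvalues of $H_\omega(\Lambda)$ lying in a small energy window can be approximated, up to an error that is negligible in probability, by the superposition over a grid of disjoint subcubes $\Lambda_{\ell}(\gamma)$ of side $\ell=\ell(L)$ (with $\ell\to\infty$ and $\ell=o(L)$) of the eigenvalues of the independent local Hamiltonians $H_\omega(\Lambda_{\ell}(\gamma))$ restricted to that window, each such subcube contributing at most one eigenvalue thanks to the Minami estimate. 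The key quantitative point is that the number of subcubes is $\sim (L/\ell)^d$, the expected number of eigenvalues per subcube in $E_0+I_\Lambda$ is $\sim N(E_0+I_\Lambda)\,\ell^d$ by (W) and the definition of $N$, and the Minami error per subcube is $\lesssim (|I_\Lambda|\,\ell^d)^{1+\rho}$; summing gives a total Minami error $\lesssim (L/\ell)^d (|I_\Lambda|\ell^d)^{1+\rho}$, which must be small compared to the total expected count $\sim |\Lambda|\,N(E_0+I_\Lambda)$. This is exactly where the constraints in~\eqref{eq:63} — in particular $N(E_0+I_\Lambda)|I_\Lambda|^{-1-\tilde\rho}\geq 1$ together with the admissible range $\tilde\rho\in(0,\rho/(1+(1+\rho)d))$ — are used: they guarantee that one can choose the intermediate scale $\ell$ so that all error terms vanish in the limit.

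The second step is to translate the problem on $t$ into a genuine point-process statement. Under the uniform distribution $\car_{[0,1]}(t)dt$, averaging over $t$ replaces the single window $E_0+I_\Lambda$ by a family of shifted windows and, as in~\cite{Ge-Kl:10} and in the proof of Theorem~\ref{thr:2}, turns the deterministic (in $\omega$) statement into one about the distribution of the unfolded points $|N(E_0+I_\Lambda)|\,|\Lambda|[N_{E_0+I_\Lambda}(E_n(\omega,\Lambda))-t]$. One then checks convergence to the Poisson process by the standard two criteria (see e.g.~\cite{MR1700749,MR2364939}): (i) convergence of the intensity, i.e. the expected number of points in a bounded Borel set $B$ converges to $|B|$, which follows from the absolute continuity of $N$ (a consequence of (W)) together with~\eqref{eq:64}, the latter ensuring that replacing $L$ by $L+\ell'$ with $\ell'=o(L)$ does not change the normalization $N(E_0+I_\Lambda)$ to leading order, so that the subcube decomposition is compatible with the unfolding; and (ii) asymptotic independence / absence of clustering, i.e. for disjoint bounded sets the factorial moments factorize, which is where (IAD) (independence of the $H_\omega(\Lambda_{\ell}(\gamma))$) and again the Minami estimate (each subcube gives a Bernoulli-type contribution, no doubling) enter. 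The almost-sure-in-$\omega$ character is obtained, as in~\cite{Ge-Kl:10}, by a Borel–Cantelli argument along a subsequence of scales $L_k$ growing polynomially, using the second condition in~\eqref{eq:63}, namely $|\Lambda|^{1-\delta}N(E_0+I_\Lambda)\to+\infty$, to control the variance of the relevant counting functionals and sum the deviation probabilities, followed by an interpolation between consecutive $L_k$ that relies precisely on~\eqref{eq:64}.

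The main obstacle is the simultaneous optimization of the scale $\ell=\ell(L)$ against the shrinking window $I_\Lambda$: one needs $\ell\to\infty$ and $\ell=o(L)$ for the decoupling, $N(E_0+I_\Lambda)\ell^d\to\infty$ so that each relevant subcube typically does contain an eigenvalue and the process is not degenerate, yet $(L/\ell)^d(|I_\Lambda|\ell^d)^{1+\rho}=o(|\Lambda|\,N(E_0+I_\Lambda))$ for the Minami error to be negligible. Verifying that the prescribed range $\tilde\rho<\rho/(1+(1+\rho)d)$ together with~\eqref{eq:63} is exactly what makes such a choice of $\ell$ possible is the technical heart of the proof; once the scale is fixed, the Poisson convergence and the almost-sure upgrade proceed by the same arguments as in the macroscopic case, Theorem~\ref{thr:2}, with only notational changes. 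I expect the remaining steps — the intensity computation, the factorial-moment factorization, and the Borel–Cantelli/interpolation argument — to be routine adaptations of~\cite{Ge-Kl:10}.
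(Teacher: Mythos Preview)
Your overall architecture matches the paper's: reduce to the local i.i.d.\ picture via the approximation theorem (Theorem~\ref{thr:vbig1} here, Theorem~1.16 in~\cite{Ge-Kl:10}), prove Poisson convergence for the resulting Bernoulli array, upgrade to almost-sure along a polynomial subsequence via Borel--Cantelli, and interpolate between consecutive scales using~\eqref{eq:64}. One structural point you omit: the paper explicitly splits into two regimes. If $|N(E_0+I_\Lambda)|\le|\Lambda|^{-\alpha}$ (with $\alpha$ chosen as in section~\ref{sec:reduct-local-behav}), Theorem~\ref{thr:vbig1} is applied directly to the single interval $E_0+I_\Lambda$ with $\ell_\Lambda\asymp|N(E_0+I_\Lambda)|^{-\nu}$; if $|N(E_0+I_\Lambda)|\ge|\Lambda|^{-\alpha}$, one first partitions $E_0+I_\Lambda$ into subintervals of $N$-weight $\asymp|\Lambda|^{-\alpha}$ exactly as in the macroscopic proof, and then the argument is literally that of Theorem~\ref{thr:2}. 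The first condition in~\eqref{eq:63} is only invoked in the first regime (cf.\ Remark~\ref{rem:4}).

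There is, however, a genuine error in your scale calibration. You write that one needs $N(E_0+I_\Lambda)\,\ell^d\to\infty$ ``so that each relevant subcube typically does contain an eigenvalue.'' This is backwards. The Poisson limit arises precisely because each subcube $\Lambda_\ell(\gamma)$ carries at most one eigenvalue in the window with \emph{small} probability $\sim|N(E_0+I_\Lambda)|\,\ell^d$, while the number of subcubes $\sim(L/\ell)^d$ is large enough that the expected total count $\sim|N(E_0+I_\Lambda)|\,|\Lambda|$ diverges (this is the second condition in~\eqref{eq:63}). The admissibility condition~\eqref{eq:69} in Theorem~\ref{thr:vbig1} in fact requires $|N(I_\Lambda)|^{1/(1+\rho')}\ell^d\to0$, and Lemma~\ref{le:7} gives $\pro(X=1)\sim|N(I_\Lambda)|\,\ell^d$, which must be $o(1)$ for the Bernoulli-to-Poisson mechanism to operate. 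If you imposed $N(E_0+I_\Lambda)\,\ell^d\to\infty$ instead, each subcube would typically carry many eigenvalues, the Minami estimate would give no useful control on multiplicities, and the whole reduction would collapse. Once you reverse this inequality, the scale window~\eqref{condell} and the choice $\ell\asymp|N(I_\Lambda)|^{-\nu}$ with $\nu$ satisfying~\eqref{eq:25} are exactly what~\eqref{eq:63} is designed to allow, and the remainder of your outline (with the Laplace-functional computation replacing your factorial-moment language, as in Lemma~\ref{le:10}) goes through essentially as in the macroscopic case.
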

\noindent Note that the first condition in~\eqref{eq:63} requires that
the derivative of $N$ does not vanish too fast at $E_0$. As a
consequence of Theorem~\ref{thr:3}, using the results
of~\cite{MR2352280}, one shows that one has convergence of the
unfolded local level spacings distribution at any point of the almost
sure spectrum if one looks at ``large'' enough neighborhoods of the
point; here, ``large'' does not mean that the neighborhood needs to be
large: it merely needs not to shrink too fast to $0$
(see~\eqref{eq:63}).
\subsection{Results for the random Hamiltonian on the whole space}
\label{sec:results-full-random}
In our previous results, we considered the eigenvalues of the random
Hamiltonian restricted to a box. As in~\cite{Ge-Kl:10}, one can also
consider the operator $H_\omega$ on the whole space. Therefore, we
recall
\begin{Pro}[\cite{Ge-Kl:10}]
  \label{pro:1}
  Assume (IAD), (W) and (Loc). Fix $q>2d$. Then, there exists
  $\gamma>0$ such that, $\omega$-almost surely, there exists
  $C_\omega>1$, $\esp(C_\omega)<\infty$, such that
  \begin{enumerate}
  \item with probability $1$, if $E\in I\cap\sigma(H_\omega)$ and
    $\varphi$ is a normalized eigenfunction associated to $E$ then,
    for some $x(E,\omega)\in\R^d$ or $\Z^d$, a maximum of
    $x\mapsto\|\varphi\|_x$, for some $C_\omega>0$, one has, for
    $x\in\R^d$,
    \begin{equation*}
      \|\varphi\|_x\leq C_\omega (1+|x(E,\omega)|^2)^{q/2}e^{-\gamma
        |x-x(E,\omega)|^\xi}
    \end{equation*}
    where $\|\cdot\|_x$ is defined in~\eqref{eq:22}.\\
    Moreover, one has $\esp(C_\omega)<+\infty$.\\
    $x(E,\omega)$ is a center of localization for $E$ or $\varphi$.
  \item Pick $J\subset I$ such that $|N(J)|>0$. Let
    $N^f(J,\Lambda,\omega)$ denotes the number of eigenvalues of
    $H_\omega$ having a center of localization in $\Lambda$. Then,
    there exists $\beta>0$ such that, for $\Lambda$ sufficiently
    large, one has
    \begin{equation*}
      \left|\frac{N^f(J,\Lambda,\omega)}{|N(J)|\,|\Lambda|}-1\right|
      \leq\frac1{\log^\beta|\Lambda|}.
    \end{equation*}
  \end{enumerate}
\end{Pro}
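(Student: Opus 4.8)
Since this proposition is taken verbatim from~\cite{Ge-Kl:10}, in practice one would simply invoke it; for completeness, here is how the argument is organized.

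\textbf{Part (1): from the finite-volume hypothesis (Loc) to infinite-volume SULE bounds.} The plan is first to transfer the kernel decay in~\eqref{eq:84} from the boxes $H_\omega(\Lambda_L)$ to $H_\omega$ on the whole space. This is a routine covering/telescoping step: one writes $f(H_\omega)$ as a limit of $f(H_\omega(\Lambda_L))$, glues boxes using a Combes--Thomas estimate, and concludes that $\sup_{\gamma'}\esp\big(\sum_{\gamma\in\Z^d}e^{|\gamma-\gamma'|^\xi}\,\|\car_{\Lambda(\gamma')}f(H_\omega)\car_{\Lambda(\gamma)}\|_2\big)<\infty$, uniformly over $f$ with $\supp f\subset I$, $|f|\le1$. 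Given a normalized eigenfunction $\varphi$ at energy $E\in I$, one picks a smooth $f=f_E$ with $\supp f\subset I$, $|f|\le1$, $f\equiv1$ near $E$, so that $\varphi=f(H_\omega)\varphi$ and hence $\|\varphi\|_x\le\sum_\gamma\|\car_{\Lambda(x)}f(H_\omega)\car_{\Lambda(\gamma)}\|_2\,\|\varphi\|_\gamma$; only finitely many $f_E$ are needed to cover the compact interval $I$ at unit scale, so the weight survives. Since $x(E,\omega)$ maximizes $x\mapsto\|\varphi\|_x$ and $\sum_\gamma\|\varphi\|_\gamma^2=1$, a Schur/Cauchy--Schwarz estimate produces $\|\varphi\|_x\le D_\omega(x(E,\omega))\,e^{-\frac\gamma2|x-x(E,\omega)|^\xi}$ with a random, $x(E,\omega)$-dependent constant $D_\omega$. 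Finally, the polynomial prefactor $(1+|x(E,\omega)|^2)^{q/2}$ and the integrability $\esp(C_\omega)<\infty$ come out of a Borel--Cantelli argument: by the Wegner estimate (W), a box of side $n$ carries only $O(n^d)$ eigenvalues with energy in $I$, so the probability that some associated local constant $D_\omega$ exceeds $n^{q}$ inside that box is summable in $n$ precisely when $q>2d$; off the corresponding null set one may replace $D_\omega(y)$ by $C_\omega(1+|y|^2)^{q/2}$ with $\esp(C_\omega)<\infty$.

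\textbf{Part (2): reduction to a finite-volume count.} I would compare $N^f(J,\Lambda,\omega)$ with $\tr\car_J(H_\omega(\Lambda^+))$, where $\Lambda^+=\Lambda_{L+\ell_L}$ and $\ell_L$ is a power of $\log L$. By Part (1), an eigenfunction of $H_\omega$ with center in $\Lambda$ has mass $1-o(1)$ inside $\Lambda^+$, so its restriction is an approximate eigenfunction of $H_\omega(\Lambda^+)$; conversely, every eigenvalue of $H_\omega(\Lambda^+)$ is $o(1)$-close to an eigenvalue of $H_\omega$ whose center, unless it lies in the $\ell_L$-collar of $\partial\Lambda$, is either in $\Lambda$ or outside it (Part (1) again handles the ambiguous collar case). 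A two-sided count then yields $|N^f(J,\Lambda,\omega)-\tr\car_J(H_\omega(\Lambda^+))|\lesssim\#\{\text{eigenvalues of }H_\omega(\Lambda^+)\text{ in a slightly enlarged }J\text{ with center in the collar}\}$, which by (W) is $O(\ell_L\,L^{d-1})$, hence $o(\log^{-\beta}|\Lambda|)$ relative to $|\Lambda|$.

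\textbf{The main step: concentration of the finite-volume count.} What remains, and where the real work lies, is to show $\omega$-almost surely that $\big||\Lambda^+|^{-1}\tr\car_J(H_\omega(\Lambda^+))-|N(J)|\big|\le\log^{-\beta}|\Lambda|$. Tile $\Lambda^+$ by disjoint sub-cubes of side $L'_L$ (again a power of $\log L$, much larger than $R_0$), separated by thin corridors. A bracketing/rank estimate gives $\tr\car_J(H_\omega(\Lambda^+))=\sum_{\Lambda'}\tr\car_J(H_\omega(\Lambda'))+O((L'_L)^{-1}|\Lambda|)$ for the interface error; by (IAD) the summands are independent; being bounded by (W) and having second moment controlled by (M), they concentrate (Bernstein) around $\sum_{\Lambda'}\esp[\tr\car_J(H_\omega(\Lambda'))]$ with fluctuation $O\big((L'_L/L)^{d/2}|\Lambda|\big)$, which is summable in $L$ for Borel--Cantelli. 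The last ingredient is the bias $\big||\Lambda'|^{-1}\esp[\tr\car_J(H_\omega(\Lambda'))]-|N(J)|\big|\to0$ as $L'_L\to\infty$ — the convergence of the finite-volume density of states, which holds under (W) but in general only slowly, and this is exactly what forces the final exponent to be a mere power of $\log$. Optimizing the three scales $\ell_L\ll L'_L\ll L$ and discarding the Borel--Cantelli null sets finishes the proof. The principal obstacle is precisely this balancing act: the interface error wants $L'_L$ large, the fluctuation wants $L'_L$ small, and the bias is only known to decay slowly in $L'_L$; reconciling the three is what pins down $\beta$ in~\cite{Ge-Kl:10}.
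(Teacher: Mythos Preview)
The paper does not prove this proposition at all; it is quoted from~\cite{Ge-Kl:10} and simply used as input for Theorem~\ref{thr:5}. So there is nothing in the present paper to compare your sketch against, and your opening sentence is exactly right: one just invokes the reference.

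That said, your sketch has one genuine slip. The hypotheses of Proposition~\ref{pro:1} are (IAD), (W) and (Loc) only --- \emph{not} (M). In your concentration step for Part~(2) you write that the sub-box counts have ``second moment controlled by (M)''; you cannot appeal to Minami here. The fix is easy and standard: the local traces $\tr\car_J(H_\omega(\Lambda'))$ are \emph{deterministically} bounded by $C_J|\Lambda'|$ (trivially by $|\Lambda'|$ in the discrete case, and by a Weyl-type bound in the continuous case since $J$ is compact and the potential is a.s.\ bounded). With a deterministic bound on each summand, Hoeffding/Bernstein gives the fluctuation estimate $O\big((L'_L/L)^{d/2}|\Lambda|\big)$ directly, no second-moment input needed. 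Relatedly, ``bounded by (W)'' is imprecise: (W) bounds the \emph{expectation} of the trace, not the trace itself; it is the deterministic bound that feeds the concentration inequality. With this correction the rest of your outline --- SULE from (Loc) plus Borel--Cantelli for Part~(1), collar comparison plus tiling and bias control for Part~(2) --- is the standard route and matches what is done in~\cite{Ge-Kl:10}.
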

\noindent In view of Proposition~\ref{pro:1}, $\omega$-almost surely,
there are only finitely many eigenvalues of $H_\omega$ in $J$ having a
localization center in $\Lambda_L$. Thus, we can enumerate these
eigenvalues as $E^f_1(\omega,\Lambda)\leq E^f_2(\omega,\Lambda)\leq
\cdots\leq E^f_N(\omega,\Lambda)$ where we repeat them according to
multiplicity. For $t\in[0,1]$, define the point process
$\Xi^f_J(\omega,t,\Lambda)$ by~\eqref{eq:6} and~\eqref{eq:7} for those
eigenvalues. As a corollary of Theorem~\ref{thr:2}, we obtain
\begin{Th}
  \label{thr:5}
  Assume (IAD), (W), (M) and (Loc) hold.
  Assume that $J\subset I$, the localization region, that $|N(J)|>0$.\\
  Then, $\omega$-almost surely, the probability law of the point
  process $\Xi^f_J(\omega,\cdot,\Lambda)$ under the uniform
  distribution $\car_{[0,1]}(t)dt$ converges to the law of the Poisson
  point process on the real line with intensity $1$.
\end{Th}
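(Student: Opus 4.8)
The plan is to deduce Theorem~\ref{thr:5} from Theorem~\ref{thr:2} by showing that, $\omega$-almost surely, the point process $\Xi^f_J(\omega,\cdot,\Lambda)$ built from the ``full space'' eigenvalues with localization center in $\Lambda$ and the process $\Xi_J(\omega,\cdot,\Lambda)$ built from the eigenvalues of the finite-volume restriction $H_\omega(\Lambda)$ are asymptotically equivalent in the sense that their laws under $\car_{[0,1]}(t)dt$ have the same limit. Concretely, it suffices to prove that for any finite collection of bounded intervals (or a convergence-determining class of test functions on the real line), the probability (with respect to $t$ uniform in $[0,1]$) that the two processes assign different values to these intervals tends to $0$ as $|\Lambda|\to+\infty$, for $\omega$ in a set of full measure. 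Once this is established, the weak convergence of the law of $\Xi_J(\omega,\cdot,\Lambda)$ to the Poisson process given by Theorem~\ref{thr:2} transfers verbatim to $\Xi^f_J(\omega,\cdot,\Lambda)$.

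The core comparison is the standard ``spectral reduction'' argument already used in~\cite{Ge-Kl:10}: using (Loc) and Proposition~\ref{pro:1}(1), one partitions $\Lambda=\Lambda_L$ into subcubes of side length $\ell=\ell(L)$ with $\ell\to+\infty$ but $\ell=o(L)$, say $\ell\sim\log^2 L$, and approximates both families of eigenvalues lying in $J$ by the eigenvalues of $H_\omega$ restricted to the enlarged subcubes. Exponential localization controls the error made when one replaces a true eigenfunction of $H_\omega$ (resp.\ of $H_\omega(\Lambda)$) whose localization center lies deep inside a subcube by an eigenfunction of the subcube Hamiltonian, the error being $O(e^{-\gamma\ell^\xi})$ on the resolvent level; the Wegner estimate (W) then bounds the number of eigenvalues within $O(e^{-\gamma\ell^\xi})$ of the endpoints of $J$, so that the boundary effects are negligible after multiplication by the natural scaling factor $|N(J)|\,|\Lambda|$. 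The only eigenvalues for which the two processes can genuinely differ are those whose localization centers lie in the boundary layer of width $O(\ell)$ around $\partial\Lambda$, and Proposition~\ref{pro:1}(2) (or directly (W) applied to that boundary layer) shows their number is $o(|\Lambda|/\log^\beta|\Lambda|)$, hence negligible on the unfolded scale. Thus the two unfolded processes coincide up to an error that vanishes after integration against $\car_{[0,1]}(t)dt$.

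There is one more point needing care: the unfolding map in~\eqref{eq:6}--\eqref{eq:7} uses the deterministic function $N_J$, which is the same for both processes, so no extra reconciliation of normalizations is required; one only has to check that the random counting function $N^f(J,\Lambda,\omega)/(|N(J)|\,|\Lambda|)$ converges to $1$, which is exactly Proposition~\ref{pro:1}(2). Combining this with Theorem~\ref{thr:2} and the elementary fact that if two sequences of point processes differ, with probability (in $t$) tending to $0$, on every bounded interval, then they have the same weak limit, yields the claimed convergence of the law of $\Xi^f_J(\omega,\cdot,\Lambda)$ to the intensity-one Poisson process.

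The main obstacle is the uniformity in $t\in[0,1]$ of the comparison between the two processes. For a \emph{fixed} energy interval the approximation theorems of~\cite{Ge-Kl:10} (Theorem~1.16) already give everything; here, however, the relevant window is the shifted interval attached to the value $t$ of the unfolded variable, and one must ensure that the exceptional set of ``bad'' $t$'s—those near which an eigenvalue sits within $O(e^{-\gamma\ell^\xi})$ of the window's edge, or near which a boundary-layer eigenvalue interferes—has Lebesgue measure tending to $0$, $\omega$-almost surely. This is handled by averaging the Wegner and Minami bounds over $t$, turning the pointwise-in-$t$ spectral estimates into estimates on $\int_0^1(\cdots)\,dt$, and then invoking a Borel--Cantelli argument along a subsequence of $L$'s (together with the monotonicity/continuity furnished by (W)) to upgrade convergence in probability to $\omega$-almost sure convergence, exactly as in the proof of Theorem~\ref{thr:2}. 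Once this averaging is set up, the rest is routine bookkeeping.
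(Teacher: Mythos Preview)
Your proposal is correct and follows the same overall strategy as the paper: deduce Theorem~\ref{thr:5} from Theorem~\ref{thr:2} by showing that, $\omega$-almost surely, the eigenvalues of $H_\omega$ in $J$ with localization center in $\Lambda$ and those of $H_\omega(\Lambda)$ in $J$ agree up to errors that are negligible on the unfolded scale, with only $o(|N(J)||\Lambda|)$ exceptions coming from a boundary layer and from small neighborhoods of the endpoints of $J$.

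The execution, however, is packaged differently. The paper does not redo a subcube decomposition here; instead it records the comparison as a separate lemma (Lemma~\ref{le:12}, proved by the same techniques as Lemma~\ref{le:6}, i.e.\ via Theorem~\ref{thr:vbig1} and Borel--Cantelli), which states that $\omega$-almost surely $|N^f(J,\Lambda,\omega)/N(J,\Lambda,\omega)-1|\le|\Lambda|^{-\beta}$ and that, after removing an $L^{-3d/2}$-neighborhood of the endpoints of $J$ and a boundary layer of width $L^\kappa$, the two families of eigenvalues are in bijection up to an error $L^{-2d}$. The uniformity in $t$ that you identify as the main obstacle is then handled not by averaging Wegner/Minami over $t$ plus a further Borel--Cantelli step, but simply by feeding the conclusions of Lemma~\ref{le:12} into the deterministic comparison Lemma~\ref{le:4}: once two point sequences agree up to a one-to-one map with error $\varepsilon_p/L_p$ and $o(L_p)$ outliers, the $t$-integrated Laplace functionals automatically differ by $o(1)$. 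This reuse of Lemma~\ref{le:4} is more economical than your proposed $t$-averaging, since it converts the $\omega$-a.s.\ pointwise eigenvalue comparison directly into the desired statement without any additional probabilistic argument in $t$ or in $\omega$; your route would work but effectively rederives the content of Lemma~\ref{le:4} in a less transparent form.
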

\noindent Theorem~\ref{thr:5} also admits an corresponding analogue
that is local in energy i.e. a counterpart of Theorem~\ref{thr:3}.
\subsection{Outline of the paper}
\label{sec:outline-paper}
Let us briefly outline the remaining parts of the paper. In
section~\ref{sec:spectr-rand}, we recall some results
from~\cite{Ge-Kl:10} that we build our analysis upon. The strategy of
the proof will be roughly to study the eigenvalues of the random
operator where the integrated density of states, $N(\cdot)$, takes
value close to $t$. Most of those eigenvalues, as in shown
in~\cite{Ge-Kl:10}, can be approximated by i.i.d. random variables the
distribution law of which is roughly uniform on $[0,1]$ when properly
renormalized. We then show that this approximation is accurate enough
to obtain the almost sure convergence announced in
Theorem~\ref{thr:2}.\\
Theorem~\ref{thr:3} is proved in the same way and we only make a few
remarks on this proof in section~\ref{sec:proof-theorem-1}.
Theorem~\ref{thr:5} is deduced from Theorem~\ref{thr:2} approximating
the eigenvalues of $H_\omega$ by those of $H_\omega(\Lambda)$ for
sufficiently large $\Lambda$; this is done in
section~\ref{sec:proof-theorem-2}.\\
Section~\ref{sec:proof-theorems} is devoted to the proof of
Theorems~\ref{thr:8} and~\ref{thr:7} . It relies on point process
techniques, in particular, on transformations of point processes (see
e.g.~\cite{MR1700749,MR2364939}).
\section{The spectrum of a random operator in the localized regime}
\label{sec:spectr-rand}
Let us now recall some results taken from~\cite{Ge-Kl:10} that will
use in our proofs.
\subsection{Distribution of the unfolded eigenvalues}
\label{sec:distr-eigenv-small}
We now describe the distribution of the unfolded eigenvalues for the
operator $H_\omega$ in a small cube. Pick $1\ll\ell'\ll\ell$. Consider
a cube $\Lambda=\Lambda_\ell$ centered at $0$ of side length
$\ell$. Pick an interval $I_\Lambda=[a_\Lambda,b_\Lambda]\subset I$
(i.e. $I_\Lambda$ is contained in the localization region) for $\ell$
sufficiently large.\\
Consider the following random variables:
\begin{itemize}
\item $X=X(\Lambda,I_\Lambda)=X(\Lambda,I_\Lambda,\ell')$ is the
  Bernoulli random variable
  \begin{equation*}
    X=\car_{H_\omega(\Lambda)\text{ has exactly one
        eigenvalue in }I_\Lambda\text{ with localization center in }
      \Lambda_{\ell-\ell'}}
  \end{equation*}
\item $\tilde E=\tilde E(\Lambda,I_\Lambda)$ is this eigenvalue
  conditioned on $X=1$.
\end{itemize}
Let $\tilde\Xi$ be the distribution function of $\tilde E$. We know
\begin{Le}[\cite{Ge-Kl:10}]
  \label{le:7}
  Assume (W), (M) and (Loc) hold.\\
  For $\kappa\in(0,1)$, one has
  \begin{equation}
    \label{eq:51}
    \left|\pro(X=1)-|N(I_\Lambda)||\Lambda|\right|\lesssim
    (|\Lambda||I_\Lambda|)^{1+\rho}+|N(I_\Lambda)||\Lambda|\ell'\ell^{-1}
      +|\Lambda|e^{-(\ell')^\kappa}
  \end{equation}
  where $N(E)$ denotes the integrated density of states of
  $H_\omega$.\\
  One has 
  \begin{equation*}
    \left|(\tilde\Xi(x)-\tilde\Xi(y))\,P(X=1)\right|\lesssim
    |x-y||I_\Lambda||\Lambda|.
  \end{equation*}
  Moreover, setting $N(x,y,\Lambda):=[N(a_\Lambda+x|I_\Lambda|)-
  N(a_\Lambda+y|I_\Lambda|)]|\Lambda|$, one has
  \begin{multline}
    \label{eq:52}
    \left|(\tilde\Xi(x)-\tilde\Xi(y))\,P(X=1)-N(x,y,\Lambda)\right|\\
    \lesssim (|\Lambda||I_\Lambda|)^{1+\rho} +
    |N(x,y,\Lambda)|\ell'\ell^{-1}+|\Lambda|e^{-(\ell')^\kappa}.
  \end{multline}
\end{Le}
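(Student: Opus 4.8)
The plan is to derive Lemma~\ref{le:7} from the finite-volume spectral analysis of~\cite{Ge-Kl:10}, combining the Wegner estimate (W), the Minami estimate (M), and the localization bound (Loc). The three assertions all compare a quantity attached to $H_\omega(\Lambda)$ with a deterministic quantity built from the integrated density of states $N$, so the common strategy is: first replace the ``one eigenvalue with localization center in $\Lambda_{\ell-\ell'}$'' event by the cruder ``one eigenvalue in $I_\Lambda$'' event, controlling the difference by the localization mass that leaks out of the sub-cube; then replace the finite-volume counting function by $N$ itself, using (W) to control the fluctuation; and finally use (M) to pass from ``at least one eigenvalue'' to ``exactly one eigenvalue.''

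First I would establish the estimate on $\pro(X=1)$. Write $\mathbf{1}_{I_\Lambda}(H_\omega(\Lambda))$ for the spectral projection and $\mathrm{tr}\,\mathbf{1}_{I_\Lambda}(H_\omega(\Lambda))$ for the number of eigenvalues in $I_\Lambda$. By (W), $\esp[\mathrm{tr}\,\mathbf{1}_{I_\Lambda}(H_\omega(\Lambda))]\leq C|I_\Lambda||\Lambda|$, and by (M) the probability of having two or more eigenvalues in $I_\Lambda$ is $\lesssim(|I_\Lambda||\Lambda|)^{1+\rho}$; hence $\pro(H_\omega(\Lambda)$ has exactly one eigenvalue in $I_\Lambda)$ differs from $\esp[\mathrm{tr}\,\mathbf{1}_{I_\Lambda}(H_\omega(\Lambda))]$ by at most $O((|I_\Lambda||\Lambda|)^{1+\rho})$. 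Next, $\esp[\mathrm{tr}\,\mathbf{1}_{I_\Lambda}(H_\omega(\Lambda))]$ must be compared with $|N(I_\Lambda)||\Lambda|=|\Lambda|\int_{I_\Lambda}\nu$. This is precisely where one invokes the finite-volume approximation to the density of states from~\cite{Ge-Kl:10}: under (Loc), $\bigl|\esp[\mathrm{tr}\,\mathbf{1}_{I_\Lambda}(H_\omega(\Lambda))] - |N(I_\Lambda)||\Lambda|\bigr|$ is controlled by a boundary term $\lesssim |N(I_\Lambda)||\Lambda|\ell'/\ell$, since the discrepancy between the periodic-box eigenvalue count and the bulk density of states comes from eigenfunctions with localization centers within distance $\ell'$ of $\partial\Lambda$. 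Finally, the requirement that the localization center lie in $\Lambda_{\ell-\ell'}$ rather than all of $\Lambda$ removes exactly those near-boundary eigenvalues; by the exponential decay in (Loc), the probability that a given eigenvalue in $I_\Lambda$ has localization center outside $\Lambda_{\ell-\ell'}$ contributes the term $|\Lambda|e^{-(\ell')^\kappa}$ (for any $\kappa<\xi$, absorbing constants). Summing these three error contributions gives~\eqref{eq:51}.

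For the two statements about $\tilde\Xi$, the idea is the same but applied to truncated intervals. For $x\in[0,1]$ set $I_\Lambda^x:=[a_\Lambda,a_\Lambda+x|I_\Lambda|]$. Conditioning on $X=1$, the event $\{\tilde E\leq a_\Lambda+x|I_\Lambda|\}$ is, up to the Minami error, the event that $H_\omega(\Lambda)$ has exactly one eigenvalue in $I_\Lambda$ and it lies in $I_\Lambda^x$; thus $(\tilde\Xi(x)-\tilde\Xi(y))\pro(X=1)$ equals, up to $O((|I_\Lambda||\Lambda|)^{1+\rho})$, the expected number of eigenvalues in the slice $[a_\Lambda+y|I_\Lambda|,a_\Lambda+x|I_\Lambda|]$ having localization center in $\Lambda_{\ell-\ell'}$. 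The bound $\lesssim|x-y||I_\Lambda||\Lambda|$ is then immediate from (W) applied to the sub-interval of length $|x-y||I_\Lambda|$. The refined estimate~\eqref{eq:52} follows by comparing that expected eigenvalue count with $N(x,y,\Lambda)=[N(a_\Lambda+x|I_\Lambda|)-N(a_\Lambda+y|I_\Lambda|)]|\Lambda|$, again via the finite-volume-to-bulk comparison from~\cite{Ge-Kl:10}, whose boundary error is now $\lesssim|N(x,y,\Lambda)|\ell'/\ell$, plus the localization-center correction $|\Lambda|e^{-(\ell')^\kappa}$ and the Minami term $(|I_\Lambda||\Lambda|)^{1+\rho}$.

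The main obstacle is the middle step: the quantitative comparison of $\esp[\mathrm{tr}\,\mathbf{1}_{J}(H_\omega(\Lambda))]$ (restricted to eigenvalues with center in $\Lambda_{\ell-\ell'}$) with $|\Lambda|$ times the bulk quantity $|N(J)|$, uniformly over sub-intervals $J$, with an explicit boundary-layer error of order $\ell'/\ell$. This is not a soft consequence of (W) and (M) alone — it requires the localization input (Loc) to show that bulk eigenfunctions do not feel the boundary condition, together with a telescoping/covering argument relating $H_\omega(\Lambda)$ to overlapping small boxes, which is the content of the approximation theorem (Theorem~1.16) of~\cite{Ge-Kl:10} that the paper explicitly relies on. Once that is in hand, the Wegner and Minami estimates supply only the routine ``one versus at least one'' and Lipschitz-in-$J$ bounds, and assembling the pieces is bookkeeping.
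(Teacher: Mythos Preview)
The paper does not prove Lemma~\ref{le:7}; it is quoted verbatim from~\cite{Ge-Kl:10} (see the heading of section~\ref{sec:spectr-rand}: ``Let us now recall some results taken from~\cite{Ge-Kl:10}''), so there is no in-paper proof to compare against. Your sketch is a faithful outline of the argument as it is carried out in~\cite{Ge-Kl:10}: the Minami estimate (M) converts ``at least one eigenvalue'' into ``exactly one'' at cost $(|I_\Lambda||\Lambda|)^{1+\rho}$; the localization bound (Loc) controls the probability that the unique eigenvalue has localization center in the boundary layer $\Lambda_\ell\setminus\Lambda_{\ell-\ell'}$, producing the $|\Lambda|e^{-(\ell')^\kappa}$ term; and the comparison of the finite-volume expected count with $|N(\cdot)||\Lambda|$ is indeed the substantial step, handled in~\cite{Ge-Kl:10} via the box-decomposition machinery you cite (their Theorem~1.16, restated here as Theorem~\ref{thr:vbig1}), which is where the $\ell'/\ell$ boundary term originates. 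Your identification of this last comparison as the non-routine input is exactly right, and your derivation of the two $\tilde\Xi$ estimates by restricting to sub-intervals $[a_\Lambda+y|I_\Lambda|,a_\Lambda+x|I_\Lambda|]$ and rerunning the same three-term error analysis is the correct reduction.
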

\noindent Estimates~\eqref{eq:51} and~\eqref{eq:52} are of interest
mainly if their right hand side, which is to be understood as an error
term, is smaller than the main term. In~\eqref{eq:51}, the main
restriction comes from the requirement that $N(I_\Lambda)|\Lambda|\gg
(|\Lambda||I_\Lambda|)^{1+\rho}$ which is essentially a requirement
that $N(I_\Lambda)$ should not be too small with respect to
$|I_\Lambda|$. Lemma~\ref{le:7} will be used in conjunction with
Theorems~\ref{thr:vbig1}. The cube $\Lambda$ in Lemma~\ref{le:7} will
be the cube $\Lambda_\ell$ in Theorem~\ref{thr:vbig1}. Therefore, the
requirements induced by the other two terms are less restrictive. The
second term is an error term if $\ell'\ll\ell$ which is guaranteed by
assumption; this induces no new requirement. The third term in the
right hand side of~\eqref{eq:51} being small compared to
$|N(I_\Lambda)||\Lambda|$ requires that $|N(I_\Lambda)||\Lambda|\gg
\ell^d e^{-(\ell')^\kappa}$. This links the size of the cube
$\Lambda=\Lambda_\ell$ where we apply Lemma~\ref{le:7} to the size of
$|N(I_\Lambda)|$. The right choice for $\ell$ (that will become clear
from Theorem~\ref{thr:vbig1} stated below) is $\ell\asymp
|N(I_\Lambda)|^{-\nu}$. In our application, we will pick
$\ell'\asymp(\log\ell)^{1/\xi}$ for some $\xi\in(0,1)$ (coming from
the localization estimate (Loc)); so taking $\kappa>\xi$ ensures that
the third term in the right hand side of~\eqref{eq:51} is small
compared to $|N(I_\Lambda)||\Lambda|$. For further details, we refer
to the comments following the statement of Theorem~\ref{thr:vbig1} and
section~\ref{sec:reduct-local-behav} for
details.\\
In~\eqref{eq:52}, the main restriction comes from the requirement that
$N(x,y,\Lambda)|\Lambda|\gg (|\Lambda||I_\Lambda|)^{1+\rho}$. This is
essentially a requirement on the size of $|x-y|$. It should not be too
small. On the other hand, we expect the spacing between the
eigenvalues of $H_\omega(\Lambda_L)$ to be of size $|\Lambda_L|^{-1}$
(we keep the notations of Theorem~\ref{thr:vbig1} and recall that the
cube $\Lambda$ in Lemma~\ref{le:7} will be the cube $\Lambda_\ell$ in
Theorem~\ref{thr:vbig1}, hence, a cube much smaller that
$\Lambda_L$). So to distinguish between the eigenvalues, one needs to
be able to know $\tilde\Xi$ up to resolution
$|x-y||I_\Lambda|\sim|\Lambda_L|^{-1}$. This will force us to use
Lemma~\ref{le:7} on intervals $I_\Lambda$ such that
$|N(I_\Lambda)|\asymp|\Lambda|^{-\alpha}$ for some $\alpha\in(0,1)$
close to $1$ (see the discussion following Theorem~\ref{thr:vbig1} and
section~\ref{sec:reduct-local-behav}). Moreover, the approximation of
$\tilde\Xi(x)-\tilde\Xi(y)$ by $N(x,y,\Lambda)/P(X=1)$ will be good
if $|x-y|\gg(|\Lambda_L||I_\Lambda|)^{-1}\asymp |\Lambda_L|^{-\beta}$
for some $\beta>0$.
\subsection{I.I.D approximations to the eigenvalues}
\label{sec:i.i.d-appr-eigenv}
The second ingredient of our proof is a description of most of the
eigenvalues of $H_\omega(\Lambda)$ in some small interval, say,
$I_\Lambda$ in terms of i.i.d. random variables. These random
variables are the eigenvalues of the restrictions of
$H_\omega(\Lambda)$ to much smaller disjoint cubes, the distribution
of which we computed in Lemma~\ref{le:7}. This description of the
eigenvalues of $H_\omega(\Lambda)$ holds with a probability close to
$1$.
\subsubsection{Localization estimates and localization centers}
\label{sec:local-estim-local}
We first recall a result of~\cite{Ge-Kl:10} defining and describing
localization centers, namely,
\begin{Le}[\cite{Ge-Kl:10}]
  \label{le:14}
  Under assumptions (W) and (Loc), for any $p>0$ and $\xi\in(0,1)$,
  there exists $q>0$ such that, for $L\geq1$ large enough, with
  probability larger than $1-L^{-p}$, if
  \begin{enumerate}
  \item $\varphi_{n,\omega}$ is a normalized eigenvector of
    $H_{\omega}(\Lambda_L)$ associated to $E_{n,\omega}\in I$,
  \item $x_n(\omega)\in \Lambda_L$ is a maximum of
    $x\mapsto\|\varphi_{n,\omega}\|_x$ in $\Lambda_L$,
  \end{enumerate}
  then, for $x\in\Lambda_L$, one has
  \begin{equation*}
    \|\varphi_{n,\omega}\|_x\leq L^q e^{-|x-x_n(\omega)|^\xi}
  \end{equation*}
  where $\|\cdot\|_x$ is defined in~\eqref{eq:22}.\\
  Define $\D C(\varphi)=\{x\in\Lambda;\
  \|\varphi\|_x=\max_{\gamma\in\Lambda} \|\varphi\|_{\gamma}\}$ to be
  the set of localization centers for $\varphi$. Then, the diameter of
  $C(\varphi_j(\omega,\Lambda))$ is less than
  $C_q(\log|\Lambda|)^{1/\xi}$.
\end{Le}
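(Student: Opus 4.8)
The plan is to obtain the pointwise decay estimate and the diameter bound for the set of localization centers directly from the finite-volume localization hypothesis (Loc), combined with a Wegner-type counting of the eigenvalues in $I$ and a Chebyshev (Markov) argument. First I would fix $p>0$ and $\xi\in(0,1)$, and pick an auxiliary exponent $\xi'\in(\xi,1)$ to be used in (Loc); the gap between $\xi$ and $\xi'$ will absorb polynomial losses. The key observation is that for a normalized eigenfunction $\varphi_{n,\omega}$ of $H_\omega(\Lambda_L)$ with eigenvalue $E_{n,\omega}\in I$, one can write $\|\varphi_{n,\omega}\|_x = \|\car_{\Lambda(x)} f(H_\omega(\Lambda_L))\varphi_{n,\omega}\|_2$ for any $f$ with $\supp f\subset I$, $|f|\le 1$ and $f(E_{n,\omega})=1$; choosing $f$ adapted to a finite $\delta$-net of $I$ and inserting $\varphi_{n,\omega} = \car_{\Lambda(x_n(\omega))}\varphi_{n,\omega} + (\text{small remainder})$ reduces the problem to controlling the matrix elements $\|\car_{\Lambda(x)} f(H_\omega(\Lambda_L))\car_{\Lambda(\gamma)}\|_2$, which are exactly the quantities appearing in (Loc).

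Next I would run a Markov/Borel--Cantelli argument. Define the random variable $S_\omega = \sup_{\supp f\subset I, |f|\le1}\sum_{\gamma\in\Z^d} e^{|\gamma|^{\xi'}}\|\car_{\Lambda(0)} f(H_\omega(\Lambda_L))\car_{\Lambda(\gamma)}\|_2$ and its translates; (Loc) says $\esp(S_\omega)\le C$ uniformly in $L$. By Markov's inequality, outside an event of probability $\lesssim L^{-p}$ (after summing the failure probability over the $\lesssim L^d$ relevant base points $y\in\Lambda_L$ and over a $\delta$-net of functions $f$ with $\delta$ polynomially small in $L$, choosing the implicit constant large enough), one has $S_\omega(\text{translated to }y)\le L^{q_0}$ for every $y\in\Lambda_L$, for a suitable $q_0=q_0(p,d)$. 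On this good event, the single term $\gamma = x-y$ in the sum is bounded by $L^{q_0} e^{-|x-y|^{\xi'}}$; relocating $y$ to a localization center $x_n(\omega)$ and using $e^{-|x-y|^{\xi'}}\le C_q' e^{-|x-y|^\xi}$ (valid because $\xi'>\xi$, at the cost of a constant, which one then absorbs by enlarging the power $q$ of $L$) yields the stated bound $\|\varphi_{n,\omega}\|_x\le L^q e^{-|x-x_n(\omega)|^\xi}$, after also using the Wegner estimate (W) so that only polynomially (in $L$) many eigenvalues in $I$ need to be controlled simultaneously and the remainder terms from replacing $\varphi_{n,\omega}$ by $\car_{\Lambda(x_n(\omega))}\varphi_{n,\omega}$ are negligible.

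Finally, the diameter bound for $C(\varphi_j(\omega,\Lambda))$ follows by a direct comparison argument on the good event. If $x$ and $x'$ are both maxima of $y\mapsto\|\varphi\|_y$, then $\|\varphi\|_{x'}=\|\varphi\|_x$, while applying the decay estimate centered at $x$ gives $\|\varphi\|_{x'}\le L^q e^{-|x'-x|^\xi}$ and a crude lower bound $\|\varphi\|_x\ge c|\Lambda|^{-1/2}$ (since $\varphi$ is normalized on $\Lambda$, its largest local mass is at least $|\Lambda|^{-1/2}$). Combining, $c|\Lambda|^{-1/2}\le L^q e^{-|x'-x|^\xi}$, i.e. $|x'-x|^\xi\le C'\log L$, hence $|x'-x|\le C_q(\log|\Lambda|)^{1/\xi}$, as claimed. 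The main obstacle I anticipate is the bookkeeping in the union bound: one must simultaneously control all eigenvalues in $I$ (handled via (W)), all base points in $\Lambda_L$, and a sufficiently fine net of functions $f$, while keeping the total failure probability below $L^{-p}$ and the polynomial loss packaged into a single exponent $q$; getting the quantifiers and the $\delta$-net resolution to line up cleanly, and tracking how the spectral-theorem remainder in $\varphi = \car_{\Lambda(x_n)}\varphi + \dots$ is absorbed, is the delicate part, but it is routine given (Loc), (W) and the uniformity in $L$ built into (Loc).
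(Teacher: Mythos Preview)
The paper does not prove this lemma; it is quoted verbatim from \cite{Ge-Kl:10} and used as a black box. So there is no ``paper's own proof'' to compare against, and the question is only whether your plan actually yields a proof.

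Your overall architecture is right: apply Markov's inequality to the quantity in (Loc) (note that the $\sup_f$ sits \emph{inside} the expectation in \eqref{eq:84}, so no $\delta$-net over $f$ is needed), take a union bound over the $\lesssim L^d$ base points $y\in\Lambda_L$, and get that with probability $\ge 1-L^{-p}$ one has $\sup_f\|\car_{\Lambda(y)}f(H_\omega(\Lambda_L))\car_{\Lambda(\gamma)}\|_2\le L^{q_0}e^{-|y-\gamma|^{\xi'}}$ for all $y,\gamma$. Your derivation of the diameter bound from the decay estimate, via $\|\varphi\|_{x_n}\ge c|\Lambda|^{-1/2}$, is also correct.

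The gap is in the step where you ``insert $\varphi_{n,\omega} = \car_{\Lambda(x_n(\omega))}\varphi_{n,\omega} + (\text{small remainder})$'' into $\|\car_{\Lambda(x)}f(H)\varphi_{n,\omega}\|$. That remainder is small \emph{only if} $\varphi_{n,\omega}$ is already known to be localized near $x_n(\omega)$, which is exactly what you are trying to prove; as written this is circular, and the Wegner estimate does not help here. The clean, non-circular replacement is to exploit the $\sup_f$ in (Loc) directly: choosing $f$ with $f(E_{n,\omega})=1$ and $f(E_{m,\omega})=0$ for $m\ne n$ (a Borel function supported in $I$, $|f|\le 1$, admissible in \eqref{eq:84}) gives $f(H_\omega(\Lambda_L))=|\varphi_{n,\omega}\rangle\langle\varphi_{n,\omega}|$ in the simple case, hence
\[
\|\varphi_{n,\omega}\|_x\,\|\varphi_{n,\omega}\|_{x_n(\omega)}=\|\car_{\Lambda(x)}f(H_\omega(\Lambda_L))\car_{\Lambda(x_n(\omega))}\|_2\le L^{q_0}e^{-|x-x_n(\omega)|^{\xi'}}.
\]
Now divide by the lower bound $\|\varphi_{n,\omega}\|_{x_n(\omega)}\ge c\,L^{-d/2}$ (the same one you used for the diameter) and absorb $\xi'>\xi$ into $q$. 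In the degenerate case the same choice of $f$ produces the spectral projection onto the eigenspace, and one argues with that projection instead of a rank-one; this is where a little extra care is needed, but it is genuinely routine. With this correction your plan goes through.
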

\noindent We define localization centers in a unique way by ordering
the set $C(\varphi)$ lexicographically and take the supremum.
\subsubsection{An approximation theorem for eigenvalues}
\label{sec:an-appr-theor}
Pick $\xi\in(0,1)$, $R>1$ large and $\rho'\in(0,\rho)$ where $\rho$ is
defined in (M). For a cube $\Lambda$, consider an interval
$I_\Lambda=[a_\Lambda,b_\Lambda]\subset I$. Set $\ell'_\Lambda= (R\log
|\Lambda|)^{\frac 1\xi}$. We say that the sequence
$(I_\Lambda)_\Lambda$ is $(\xi,R,\rho')$-admissible if, for any
$\Lambda$, one has
\begin{equation}
  \label{eq:69}
  |\Lambda| |N(I_\Lambda)|\geq1,\quad
  |N(I_\Lambda)||I_\Lambda|^{-(1+\rho')}\geq1, \quad 
  |N(I_\Lambda)|^{\frac 1{1+\rho'}} (\ell'_\Lambda)^d\leq 1.
\end{equation}
One has
\begin{Th}[\cite{Ge-Kl:10}]
  \label{thr:vbig1}
  Assume (IAD), (W), (M) and (Loc) hold. Let $\Lambda=\Lambda_L$ be
  the cube of center $0$ and side length $L$.\\
  Pick $\rho'\in[0,\rho/(1+(\rho+1)d))$ where $\rho$ is defined in
  (M).  Pick a sequence of intervals that is
  $(\xi,R,\rho')$-admissible, say, $(I_\Lambda)_\Lambda$ such that
  $\ell'_\Lambda\ll\tilde\ell_\Lambda\ll L$ and $|N(I_\Lambda)|^{\frac
    1{1+\rho'}}\tilde\ell_\Lambda^d\to0$ as
  $|\Lambda|\to\infty$.\\
  For any $p>0$, for $L$ sufficiently large (depending only on
  $(\xi,R,\rho',p)$ but not on the admissible sequence of intervals),
  there exists
  \begin{itemize}
  \item a decomposition of $\Lambda_L$ into disjoint cubes of the form
    $\Lambda_{\ell_\Lambda}(\gamma_j):=\gamma_j+[0,\ell_\Lambda]^d$,
    where
    $\D\ell_\Lambda=\tilde\ell_\Lambda(1+\mathcal{O}(\tilde\ell_\Lambda
    / |\Lambda_L|))=\tilde\ell_\Lambda(1+o(1))$ such that
    \begin{itemize}
    \item $\cup_j\Lambda_{\ell_\Lambda}(\gamma_j)\subset\Lambda_L$,
    \item $\dist
      (\Lambda_{\ell_\Lambda}(\gamma_j),\Lambda_{\ell_\Lambda}(\gamma_k))\ge
      \ell'_\Lambda$ if $j\not=k$,
    \item $\dist
      (\Lambda_{\ell_\Lambda}(\gamma_j),\partial\Lambda_L)\ge
      \ell'_\Lambda$,
    \item
      $|\Lambda_L\setminus\cup_j\Lambda_{\ell_\Lambda}(\gamma_j)|\lesssim
      | \Lambda_L| \ell'_\Lambda/\ell_\Lambda$,
    \end{itemize}
  \item a set of configurations $\mathcal{Z}_\Lambda$ such that
    \begin{itemize}
    \item $\mathcal{Z}_\Lambda$ is large, namely,
      \begin{equation}
        \label{eq:89}
        \pro(\mathcal{Z}_\Lambda)\geq 1 -\frac12|\Lambda|^{-p}- 
        \exp\left(-c|I_\Lambda|^{1+\rho}|\Lambda|
          \ell_\Lambda^{d\rho} \right)\\-\exp\left( -c
          |\Lambda||I_\Lambda|\ell'_\Lambda\ell_\Lambda^{-1}\right)
      \end{equation}
    \end{itemize}
  \end{itemize}
  so that
  \begin{itemize}
  \item for $\omega\in\mathcal{Z}_\Lambda$, there exists at least
    $\D\frac{|\Lambda|}{\ell_\Lambda^d}\left(1+
      O\left(|N(I_\Lambda)|^{1/(1+\rho')} \ell_\Lambda^d\right)\right)$
    disjoint boxes $\Lambda_{\ell_\Lambda}(\gamma_j)$ satisfying the
    properties:
    \begin{enumerate}
    \item the Hamiltonian $H_\omega(\Lambda_{\ell_\Lambda}(\gamma_j))$
      has at most one eigenvalue in $I_\Lambda $, say,
      $E_n(\omega,\Lambda_{\ell_\Lambda}(\gamma_j))$;
    \item $\Lambda_{\ell_\Lambda}(\gamma_j)$ contains at most one
      center of localization, say $x_{k_j}(\omega,L)$, of an
      eigenvalue of $H_\omega(\Lambda)$ in $I_\Lambda $, say
      $E_{k_j}(\omega,\Lambda)$;
    \item $\Lambda_{\ell_\Lambda}(\gamma_j)$ contains a center
      $x_{k_j}(\omega,\Lambda)$ if and only if
      $\sigma(H_\omega(\Lambda_{\ell_\Lambda}(\gamma_j)))\cap
      I_\Lambda\not=\emptyset$; in which case, one has
      \begin{equation}
        \label{eq:23}
        |E_{k_j}(\omega,\Lambda)-E_n(\omega,\Lambda_{\ell_\Lambda}(\gamma_j))| \leq  
        |\Lambda|^{-R}\text{ and }\mathrm{dist}(x_{k_j}(\omega,L),
        \Lambda_L \setminus \Lambda_{\ell_\Lambda}(\gamma_j))\geq \ell'_\Lambda
      \end{equation}
      where we recall that $\ell'_\Lambda= (R\log |\Lambda|)^{\frac
        1\xi}$;
    \end{enumerate}
  \item the number of eigenvalues of $H_\omega(\Lambda)$ that are not
    described above is bounded by
    \begin{equation}
      \label{eq:16}
      C |N(I_\Lambda)||\Lambda|
      \left(|N(I_\Lambda)|^{\frac{\rho-\rho'}{1+\rho'} }
        \ell_\Lambda^{d(1+\rho)}+|N(I_\Lambda)|^{-\frac{\rho'}
          {1+\rho'}}
        (\ell'_\Lambda)^{d+1}\ell_\Lambda^{-1}\right);
    \end{equation}
    this number is $o(|N(I_\Lambda)| |\Lambda|)$ provided
    \begin{equation}
      \label{condell} 
      |N(I_\Lambda)|^{-\frac{\rho'}{1+\rho'}}
      (\ell'_\Lambda)^{d+1} \ll \ell_\Lambda \ll
      |N(I_\Lambda)|^{- \frac{\rho-\rho'}{d(1+\rho)(1+\rho')}} .
    \end{equation}
  \end{itemize}
\end{Th}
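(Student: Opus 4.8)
The plan is to construct the decomposition by hand and then combine the localization estimate (Loc) with the Wegner estimate (W) and the Minami estimate (M) to compare, box by box, the spectrum of $H_\omega(\Lambda_L)$ in $I_\Lambda$ with the spectra of the local Hamiltonians $H_\omega(\Lambda_{\ell_\Lambda}(\gamma_j))$.

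First I would tile $\Lambda_L$ by cubes of side $\tilde\ell_\Lambda$ separated from one another, and from $\partial\Lambda_L$, by corridors of width $\ell'_\Lambda=(R\log|\Lambda|)^{1/\xi}$. Since $\ell'_\Lambda\ll\tilde\ell_\Lambda\ll L$, one can slightly inflate the side length to $\ell_\Lambda=\tilde\ell_\Lambda(1+\mathcal{O}(\tilde\ell_\Lambda/L))$ so that the pattern fits exactly inside $\Lambda_L$; the complement $\Lambda_L\setminus\bigcup_j\Lambda_{\ell_\Lambda}(\gamma_j)$ is a union of corridors and hence has volume $\lesssim|\Lambda_L|\,\ell'_\Lambda/\ell_\Lambda$. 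This yields all the stated geometric properties. Next, applying Lemma~\ref{le:14} simultaneously in $\Lambda_L$ and in each $\Lambda_{\ell_\Lambda}(\gamma_j)$, I would fix, outside an event of probability $\le\tfrac14|\Lambda|^{-p}$, the bound $\|\varphi\|_x\le|\Lambda|^{q}e^{-|x-x_n(\omega)|^\xi}$ for every normalised eigenfunction with energy in $I_\Lambda$ (global and local), together with the fact that localization centers are unambiguous up to a cluster of diameter $\lesssim(\log|\Lambda|)^{1/\xi}\ll\ell'_\Lambda$.

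Call a box $\Lambda_{\ell_\Lambda}(\gamma_j)$ \emph{good} if $H_\omega(\Lambda_{\ell_\Lambda}(\gamma_j))$ has at most one eigenvalue in $I_\Lambda$ and no localization center of an eigenvalue in $I_\Lambda$ --- of either $H_\omega(\Lambda_L)$ or $H_\omega(\Lambda_{\ell_\Lambda}(\gamma_j))$ --- lies within $\ell'_\Lambda$ of $\partial\Lambda_{\ell_\Lambda}(\gamma_j)$. On a good box I would run the standard eigenfunction-transplant argument: an eigenfunction of $H_\omega(\Lambda_L)$ with center well inside the box, multiplied by a cutoff supported in the box, is an approximate eigenfunction of $H_\omega(\Lambda_{\ell_\Lambda}(\gamma_j))$ with error $\lesssim|\Lambda|^{q}e^{-(\ell'_\Lambda)^\xi}\lesssim|\Lambda|^{q-R}$ (the commutator is supported in the $\ell'_\Lambda$-layer, where the eigenfunction is exponentially small), and symmetrically in the opposite direction; together with the ``at most one eigenvalue'' property this produces a bijection between such eigenvalues and, after relabelling $R$, the bound $|E_{k_j}(\omega,\Lambda)-E_n(\omega,\Lambda_{\ell_\Lambda}(\gamma_j))|\le|\Lambda|^{-R}$ and $\mathrm{dist}(x_{k_j},\Lambda_L\setminus\Lambda_{\ell_\Lambda}(\gamma_j))\ge\ell'_\Lambda$, which are exactly properties (1)--(3).

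It remains to count. By the Minami estimate (M), the expected number of boxes with at least two eigenvalues in $I_\Lambda$ is $\lesssim(|\Lambda|/\ell_\Lambda^d)(|I_\Lambda|\ell_\Lambda^d)^{1+\rho}$, and by the Wegner estimate (W) (combined with the localization bounds above) the expected number of centers lying in the $\ell'_\Lambda$-boundary layer of some box is $\lesssim|\Lambda|\,|I_\Lambda|\,\ell'_\Lambda/\ell_\Lambda$; the boxes are pairwise at distance $\ge\ell'_\Lambda\ge R_0$, so by (IAD) the relevant indicator sums are sums of independent bounded variables and a Bernstein-type large-deviation bound turns these means into the two exponential terms in~\eqref{eq:89}, the term $\tfrac12|\Lambda|^{-p}$ coming from Lemma~\ref{le:14}. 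Finally, the eigenvalues of $H_\omega(\Lambda_L)$ in $I_\Lambda$ not accounted for are precisely those with center in a bad box, in a corridor, or in the boundary layer of $\Lambda_L$; estimating these by (W) on the corridors and boundary and by (M) inside bad boxes, and feeding in the admissibility relations~\eqref{eq:69}, yields the explicit bound~\eqref{eq:16}, which is $o(|N(I_\Lambda)|\,|\Lambda|)$ under~\eqref{condell}; the count $\tfrac{|\Lambda|}{\ell_\Lambda^d}(1+O(|N(I_\Lambda)|^{1/(1+\rho')}\ell_\Lambda^d))$ of good boxes then follows. The main obstacle is the good-box analysis: making the transplant quantitative enough that the local/global correspondence is an honest bijection with the advertised $|\Lambda|^{-R}$ accuracy, while ruling out --- via (W) at scale $\ell_\Lambda$ --- a local eigenfunction that sits against the wall of its own box; converting the expectation bounds into the sharp exponential probabilities of~\eqref{eq:89} is the other delicate point, requiring careful use of (IAD) under the constraint $\ell'_\Lambda\ge R_0$.
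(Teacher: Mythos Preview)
The present paper does not prove this theorem: it is quoted from \cite{Ge-Kl:10} (as the bracketed citation in the theorem heading indicates) and used as a black box in Section~\ref{sec:proof-theorem}. There is therefore no proof in this paper against which to compare your proposal.

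That said, your outline is a faithful sketch of the strategy one expects in \cite{Ge-Kl:10}: the geometric tiling of $\Lambda_L$ by cubes of side $\ell_\Lambda$ separated by corridors of width $\ell'_\Lambda$; the invocation of Lemma~\ref{le:14} to secure sub-exponential decay of eigenfunctions at both scales; the good/bad-box dichotomy, with (M) controlling the boxes carrying two or more eigenvalues and (W) controlling the boxes whose eigenvalue sits in the boundary layer; the cut-and-paste (transplant) argument to pair local and global eigenvalues up to error $|\Lambda|^{-R}$; and finally the passage from expected numbers of bad boxes to the exponential probability bound~\eqref{eq:89} via (IAD) and a Bernstein-type large-deviation inequality for sums of independent Bernoulli variables. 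The two points you single out as delicate --- making the transplant an honest bijection on good boxes, and ruling out local eigenfunctions hugging their own box wall --- are precisely the places where the argument in \cite{Ge-Kl:10} requires the most bookkeeping, and your use of the admissibility relations~\eqref{eq:69} to convert $|I_\Lambda|$-bounds into $|N(I_\Lambda)|$-bounds in~\eqref{eq:16} is the correct mechanism.
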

\noindent We note that the assumptions on $(I_\Lambda)_\Lambda$ in
Theorem~\ref{thr:vbig1} imply that $|I_\Lambda|\to0$ and
$|N(I_\Lambda)|$ must go to $0$ faster than logarithmically in
$|\Lambda|$ (see the right hand side of~\eqref{condell}).\\
Let us now briefly explain how the lengthscale $\ell=\ell_\Lambda$
will be chosen in our analysis (see
section~\ref{sec:reduct-local-behav}). We will use
Theorem~\ref{thr:vbig1} on intervals $I_\Lambda$ such that
$|N(I_\Lambda)||\asymp|\Lambda|^{-\alpha}$ (for some $\alpha\in(0,1)$
close to $1$) and set $\ell_\Lambda\asymp |N(I_\Lambda)|^{-\nu}$ for
some $\nu\in(0,1)$. Thus, $\log\ell'_\Lambda\ll\log\ell_\Lambda$ and
checking of the validity of~\eqref{condell} reduces to checking that
$\frac{\rho'}{1+\rho'}< \frac{\rho-\rho'}{d(1+\rho)(1+\rho')}$ which
follow from the assumption $\rho'\in[0,\rho/(1+(\rho+1)d))$. The
exponent $\nu$ is then chosen so that
\begin{equation}
  \label{eq:25}
  \frac{\rho'}{1+\rho'}<\nu<
  \frac{\rho-\rho'}{d(1+\rho)(1+\rho')}.  
\end{equation}
Note that the right hand side inequality in~\eqref{condell} implies
that $|N(I_\Lambda)|^{\frac 1{1+\rho'}}\tilde\ell_\Lambda^d\to0$ as
$(\rho-\rho')/(1+\rho)<1$. With these choices, the bound~\eqref{eq:16}
then becomes $|N(I_\Lambda)||\Lambda|^{1-\beta}$ for some
$\beta>0$.\\
To conclude this section, we note that, when the length scales are
chosen as just indicated, one easily checks that the
estimate~\eqref{eq:89} becomes 
\begin{equation}
  \label{eq:11}  
  \pro(\mathcal{Z}_\Lambda)\geq 1 - |\Lambda|^{-p}
\end{equation}
\subsection{A large deviation principle for the eigenvalue counting
  function}
\label{sec:large-devi-princ}
Define the random numbers
\begin{equation}
  \label{eq:26}
  N(I_\Lambda,\Lambda,\omega):=\#\{j;\ E_j(\omega,\Lambda)\in
  I_\Lambda\}.
\end{equation}
Write $I_\Lambda=[a_\Lambda,b_\Lambda]$ and recall that
$|N(I_\Lambda)|=N(b_\Lambda)-N(a_\Lambda)$ where $N$ is the integrated
density of states. Using Theorem~\ref{thr:vbig1} and standard large
deviation estimates for i.i.d. random variables, one shows that
$N(I_\Lambda,\Lambda,\omega)$ satisfies a large deviation principle,
namely,
\begin{Th}
  \label{thr:4}
  Assume (IAD), (W), (M) and (Loc) hold. For any
  $\rho'\in(0,\rho/(1+(1+\rho)d))$ ($\rho$ is defined in
  Assumption~(M)) $\delta\in(0,1)$ and $\nu\in(0,1-\delta)$, there
  exists $\delta'>0$ such that, if $(I_\Lambda)_\Lambda$ is a sequence
  of compact intervals in the localization region $I$ satisfying
  \begin{enumerate}
  \item $|N(I_\Lambda)||\Lambda|^{\delta}\to0$ as
    $|\Lambda|\to+\infty$
  \item $|N(I_\Lambda)|\,|\Lambda|^{1-\nu}\to+\infty$ as
    $|\Lambda|\to+\infty$
  \item $|N(I_\Lambda)|\,|I_\Lambda|^{-1-\rho'}\to+\infty$ as
    $|\Lambda|\to+\infty$,
  \end{enumerate}
  then, for any $p>0$, for $|\Lambda|$ sufficiently large (depending
  on $\rho'$ and $\nu$ but not on the specific sequence
  $(I_\Lambda)_\Lambda$), one has
  \begin{equation}
    \label{eq:12}
    \pro\left(\left|N(I_\Lambda,\Lambda,\omega)-|N(I_\Lambda)||\Lambda|
      \right|\geq |N(I_\Lambda)||\Lambda|^{1-\delta'}\right)\leq
    |\Lambda|^{-p}.
  \end{equation}
\end{Th}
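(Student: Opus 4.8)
The plan is to derive the large deviation estimate in Theorem~\ref{thr:4} from the i.i.d.\ approximation in Theorem~\ref{thr:vbig1} together with classical concentration inequalities (Bernstein/Hoeffding) for sums of independent Bernoulli variables. The key point is that, on the event $\mathcal Z_\Lambda$, the eigenvalue counting function $N(I_\Lambda,\Lambda,\omega)$ is, up to a controlled error, a sum of the indicator variables $X_j=X(\Lambda_{\ell_\Lambda}(\gamma_j),I_\Lambda)$ attached to the $\sim|\Lambda|/\ell_\Lambda^d$ disjoint subcubes produced by Theorem~\ref{thr:vbig1}. By (IAD), since $\dist(\Lambda_{\ell_\Lambda}(\gamma_j),\Lambda_{\ell_\Lambda}(\gamma_k))\geq\ell'_\Lambda\geq R_0$ for $|\Lambda|$ large, these $X_j$ are independent; by Lemma~\ref{le:7}, each has mean $\pro(X_j=1)=|N(I_\Lambda)|\ell_\Lambda^d(1+o(1))$ once the error terms there are checked to be negligible for the present choice of length scales.

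\textbf{Step 1: set up the length scales.} Given $\rho'$, $\delta$, $\nu$, choose $\ell_\Lambda=\tilde\ell_\Lambda\asymp|N(I_\Lambda)|^{-\mu}$ for a suitable $\mu\in(0,1)$ obeying~\eqref{eq:25} (with $\rho'$ in place of the admissibility exponent), and $\ell'_\Lambda=(R\log|\Lambda|)^{1/\xi}$; conditions (1)--(3) on $(I_\Lambda)_\Lambda$ guarantee that $(I_\Lambda)_\Lambda$ is $(\xi,R,\rho')$-admissible and that~\eqref{condell} holds, so Theorem~\ref{thr:vbig1} applies and, as noted there, $\pro(\mathcal Z_\Lambda)\geq 1-|\Lambda|^{-p}$ as in~\eqref{eq:11}. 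Condition (2) ensures $|N(I_\Lambda)||\Lambda|\to\infty$, which is what makes the concentration meaningful.

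\textbf{Step 2: reduce to the i.i.d.\ sum.} On $\mathcal Z_\Lambda$, Theorem~\ref{thr:vbig1} says that all but at most $O(|N(I_\Lambda)||\Lambda|^{1-\beta})$ of the eigenvalues of $H_\omega(\Lambda)$ in $I_\Lambda$ are in bijection (via~\eqref{eq:23}) with subcubes carrying exactly one eigenvalue; hence
\begin{equation*}
  \Bigl|\,N(I_\Lambda,\Lambda,\omega)-\sum_j X_j\,\Bigr|\leq C|N(I_\Lambda)||\Lambda|^{1-\beta}\quad\text{on }\mathcal Z_\Lambda.
\end{equation*}
So it suffices to prove the deviation bound for $S_\Lambda:=\sum_j X_j$, with some $\delta'<\beta$.

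\textbf{Step 3: concentration for $S_\Lambda$.} Write $m_\Lambda:=\esp(X_j)$, $M:=\#\{j\}\asymp|\Lambda|/\ell_\Lambda^d$, so $\esp(S_\Lambda)=Mm_\Lambda=|N(I_\Lambda)||\Lambda|(1+o(1))$; the $o(1)$ correction (from Lemma~\ref{le:7} and from $M$ vs.\ $|\Lambda|/\ell_\Lambda^d$) is of size $|N(I_\Lambda)|^{\mu/(1+\rho')}$ up to logarithms, hence $\ll |N(I_\Lambda)||\Lambda|^{1-\delta'}$ for $\delta'$ small. Bernstein's inequality for the bounded independent variables $X_j-m_\Lambda$ gives
\begin{equation*}
  \pro\bigl(|S_\Lambda-\esp(S_\Lambda)|\geq t\bigr)\leq 2\exp\!\Bigl(-\frac{t^2}{2(Mm_\Lambda+t/3)}\Bigr).
\end{equation*}
Taking $t=c|N(I_\Lambda)||\Lambda|^{1-\delta'}$ and using $Mm_\Lambda\asymp|N(I_\Lambda)||\Lambda|$, the exponent is $\asymp -|N(I_\Lambda)||\Lambda|^{1-2\delta'}$, which by condition (2) (choosing $\delta'<\nu/2$ and comparing with $|\Lambda|^{1-\nu}$) beats $p\log|\Lambda|$ for $|\Lambda|$ large. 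Combining with $\pro(\mathcal Z_\Lambda^c)\leq|\Lambda|^{-p}$ and absorbing the deterministic errors of Steps~2--3 into the definition of $\delta'$ yields~\eqref{eq:12}.

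\textbf{Main obstacle.} The technical heart is the bookkeeping of exponents: one must choose $\mu$ (equivalently $\nu$ in the statement) and then $\delta'$ so that simultaneously (i) the number of "bad" eigenvalues~\eqref{eq:16} is $o(|N(I_\Lambda)||\Lambda|^{1-\delta'})$, (ii) the mean correction from Lemma~\ref{le:7} is of the same smaller order, and (iii) $|N(I_\Lambda)||\Lambda|^{1-2\delta'}$ still dominates $p\log|\Lambda|$ uniformly over all admissible sequences — the uniformity being crucial for the "depending on $\rho'$ and $\nu$ but not on the specific sequence" clause. This is where condition (1) ($|N(I_\Lambda)||\Lambda|^\delta\to0$) and condition (2) are both genuinely used, the former to control $\ell_\Lambda$ relative to $|\Lambda|$ (so that $M\asymp|\Lambda|/\ell_\Lambda^d$ and the subcubes fit), the latter to keep the Bernstein exponent large.
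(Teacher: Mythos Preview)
Your proposal is correct and follows essentially the same approach as the paper, which does not give a detailed proof but states explicitly that Theorem~\ref{thr:4} is obtained ``using Theorem~\ref{thr:vbig1} and standard large deviation estimates for i.i.d.\ random variables'' and refers to Theorem~1.8 of~\cite{Ge-Kl:10} for the details. Your reduction on $\mathcal{Z}_\Lambda$ to the sum $S_\Lambda=\sum_j X_j$ of independent Bernoulli variables attached to the subcubes, followed by Bernstein's inequality with mean computed via Lemma~\ref{le:7}, is precisely this scheme; the exponent bookkeeping you describe is the same as that carried out in the discussion after Theorem~\ref{thr:vbig1} and in the proof of Lemma~\ref{le:5}.
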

\noindent This result is essentially Theorem 1.8 in~\cite{Ge-Kl:10};
the only change is a change of scale for $|N(I_\Lambda)|$ in terms of
$|\Lambda|$ (see point (1)). Up to this minor difference, the proofs
of the two results are the same.\\
Assume that, for $J$, an interval in the region of localization $I$,
one has the lower bound $|N(x)-N(y)| \gtrsim|x-y|^{1+\rho'}$ for
$(x,y)\in I^2$ and some $\rho'\in(0,\rho/(1+(1+\rho)d))$. Then, as
$K\mapsto |N(K)|$ is a measure, thus, additive, for $K\subset J$ the
region of localization, one may split $K$ into intervals $(K_k)_k$
such that $|N(K_k)|\asymp|\Lambda|^{-\delta}$, and sum the estimates
given by Theorem~\ref{thr:4} on each $K_k$ to obtain that
\begin{equation*}
  \pro\left(\left|N(K,\Lambda,\omega)-|N(K)||\Lambda| \right|\geq
    |N(K)||\Lambda|^{1-\delta'}\right)\lesssim
  |\Lambda|^{-p}.
\end{equation*}
Though we will not need it, this gives an interesting large deviation
estimate for intervals of macroscopic size.
\section{The proofs of Theorems~\ref{thr:2},~\ref{thr:5}
  and~\ref{thr:3}}
\label{sec:proof-theorem}
\noindent We first prove Theorem~\ref{thr:2}. Theorem~\ref{thr:5} is
then a immediate consequence of Theorem~\ref{thr:2} and the fact that
most of the eigenvalues of $H_\omega(\Lambda)$ and those of $H_\omega$
having center of localization in $\Lambda$ differ at most by $L^{-p}$
for any $p$ and $L$ sufficiently large (see
section~\ref{sec:proof-theorem-2}). Theorem~\ref{thr:3} is proved in
the same way as Theorem~\ref{thr:2} in
section~\ref{sec:proof-theorem-2}; we skip most of the details of
this proof. \\
We shall use the following standard notations: $a\lesssim b$ means
there exists $c<\infty$ so that $a\le cb$; $\langle x\rangle =
(1+|x|^2)^\frac12$. We write $a\asymp b$ when $a\lesssim b$ and $b
\lesssim a$.\\
From now on, to simplify notations, we write $N$ instead of $N_J$ so
that the density of states increases from $0$ to $1$ on $J$. We also
write $\Xi$  instead of $\Xi_J$\\
For $\varphi:\ \R\to\R$ continuous and compactly supported, set
\begin{equation}
  \label{eq:4}
  {\mathcal L}_{\omega,\Lambda}(\varphi):={\mathcal L}_{\omega,J,\Lambda}:=
  \int_0^1e^{-\langle\Xi(\omega,t,\Lambda),\varphi\rangle}dt
\end{equation}
and
\begin{equation}
  \label{eq:5}
  \langle\Xi(\omega,t,\Lambda),\varphi\rangle:=
  \sum_{E_n(\omega,\Lambda)\in J}
  \varphi(|\Lambda|[N(E_n(\omega,\Lambda))-t])
\end{equation}
To prove Theorems~\ref{thr:2} and~\ref{thr:3}, it suffices
(see~\cite{Mi:11}) to prove
\begin{Th}
  \label{thr:6}
  For $\varphi:\ \R\to\R^+$ continuously differentiable and compactly
  supported, $\omega$-almost surely,
  \begin{equation}
    \label{eq:3}
    {\mathcal L}_{\omega,\Lambda}(\varphi)\vers_{|\Lambda|\to+\infty}
    \exp\left(-\int_{-\infty}^{+\infty}\left(1-e^{-\varphi(x)}\right)dx\right).
  \end{equation}
\end{Th}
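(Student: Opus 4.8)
The plan is to compute the Laplace functional $\mathcal{L}_{\omega,\Lambda}(\varphi)$ by decomposing the large box $\Lambda=\Lambda_L$ into the small disjoint cubes $\Lambda_{\ell_\Lambda}(\gamma_j)$ furnished by Theorem~\ref{thr:vbig1}, and showing that, on the event $\mathcal{Z}_\Lambda$ of probability $\geq 1-|\Lambda|^{-p}$, the eigenvalues of $H_\omega(\Lambda)$ in an energy window over which $N$ increases by $\asymp|\Lambda|^{-\alpha}$ are approximated, up to error $|\Lambda|^{-R}$, by the i.i.d.\ eigenvalues of the small-box operators $H_\omega(\Lambda_{\ell_\Lambda}(\gamma_j))$, whose distribution is controlled by Lemma~\ref{le:7}. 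First I would fix $\varphi$ with support in $[-A,A]$, and partition $[0,1]$ (the range of the unfolded energy $t$, after replacing $N$ by $N_J$) into intervals $I_k$ of length $\asymp|\Lambda|^{-\alpha}$. For each fixed $t$ in such an interval, the sum $\langle\Xi(\omega,t,\Lambda),\varphi\rangle$ only involves eigenvalues with $N(E_n)$ within $A|\Lambda|^{-1}$ of $t$, hence finitely many; these are indexed by the small cubes, and over $\mathcal{Z}_\Lambda$ all but $o(|N(I_k)||\Lambda|)$ of them are $|\Lambda|^{-R}$-close to an eigenvalue of a small box, with distinct small boxes carrying independent eigenvalues. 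The excluded eigenvalues contribute, after integrating over $t$, a term of size $o(1)$ to the log of the Laplace functional, since $\varphi$ is bounded and there are $o(|N(I_\Lambda)||\Lambda|)\cdot|\Lambda|^{-1}$ worth of them per unit $t$-length.

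The core computation is then the following. On $\mathcal{Z}_\Lambda$, writing $\tilde{E}^{(j)}$ for the (conditioned) small-box eigenvalue in $I_k$ and $X^{(j)}$ for the corresponding Bernoulli variable of Lemma~\ref{le:7}, we have
\begin{equation*}
  \int_{I_k} e^{-\langle\Xi(\omega,t,\Lambda),\varphi\rangle}\,dt
  \approx \int_{I_k} \prod_{j}\Big(1 - X^{(j)} + X^{(j)}
  e^{-\varphi(|\Lambda|[N(\tilde{E}^{(j)})-t])}\Big)\,dt + (\text{error}).
\end{equation*}
Expanding $\log$ of the product and using that the $X^{(j)}$ are independent with $\mathbb{P}(X^{(j)}=1)=|N(I_{\ell_\Lambda})||\Lambda_{\ell_\Lambda}|(1+o(1))$ by~\eqref{eq:51}, and that, conditioned on $X^{(j)}=1$, the law of $N(\tilde{E}^{(j)})$ is, by~\eqref{eq:52}, within an error $o(|N(I_k)|)$ of uniform on the image $N(I_k)$ of the window, the change of variables $x=|\Lambda|[N(\tilde{E}^{(j)})-t]$ converts $\sum_j X^{(j)}\int_{I_k}(1-e^{-\varphi(|\Lambda|[N(\tilde{E}^{(j)})-t])})\,dt$ into $|\Lambda|^{-1}\cdot\#\{j:X^{(j)}=1\}\cdot\int_{\R}(1-e^{-\varphi(x)})\,dx\cdot(1+o(1))$. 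Summing over $k$ and using Theorem~\ref{thr:4} (the large-deviation bound forcing $\#\{j:X^{(j)}=1\}\approx|N(I_k)||\Lambda|$ with probability $\geq 1-|\Lambda|^{-p}$) gives $-\log\mathcal{L}_{\omega,\Lambda}(\varphi)\to\sum_k|N(I_k)|\int_{\R}(1-e^{-\varphi(x)})\,dx = \int_{\R}(1-e^{-\varphi(x)})\,dx$, which is the claim in probability.

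To upgrade probability-convergence to almost-sure convergence, I would run the whole argument along a polynomial subsequence $|\Lambda_{L_m}|\asymp m^{\kappa}$, apply Borel--Cantelli using the $|\Lambda|^{-p}$ bounds (taking $p$ large enough that $\sum_m |\Lambda_{L_m}|^{-p}<\infty$), and then fill the gaps by a monotonicity/interpolation argument: assumption~\eqref{eq:64}-type control (here $N=N_J$ is Lipschitz and increasing, so $N(E_n(\omega,\Lambda_{L+\ell'}))$ and $N(E_n(\omega,\Lambda_L))$ differ negligibly when $\ell'=o(L)$, as in~\cite{Ge-Kl:10}) shows $\mathcal{L}_{\omega,\Lambda_L}(\varphi)$ varies by $o(1)$ between consecutive subsequence points. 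I expect the main obstacle to be the bookkeeping that makes the approximation errors uniform across the $\asymp|\Lambda|^{\alpha}$ windows $I_k$ simultaneously: one must check that the error terms in~\eqref{eq:51}, \eqref{eq:52}, \eqref{eq:16} and~\eqref{eq:12}, summed over all $k$, still produce a total of $o(1)$ in $-\log\mathcal{L}_{\omega,\Lambda}(\varphi)$, which forces the precise choice of the exponents $\alpha$ (close to $1$) and $\nu$ satisfying~\eqref{eq:25} and constrains how $\ell_\Lambda,\ell'_\Lambda$ scale with $|N(I_\Lambda)|$; the continuous differentiability of $\varphi$ enters here to control the error from replacing the true eigenvalue by the small-box eigenvalue inside $\varphi$, since $|\varphi(|\Lambda|[N(E_{k_j})-t])-\varphi(|\Lambda|[N(\tilde E^{(j)})-t])|\lesssim \|\varphi'\|_\infty|\Lambda|^{1-R}$ is summably small.
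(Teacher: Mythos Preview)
Your overall architecture is right and matches the paper: partition $J$ into windows of $N$-weight $|\Lambda|^{-\alpha}$, apply Theorem~\ref{thr:vbig1} in each window to replace the eigenvalues of $H_\omega(\Lambda)$ by the i.i.d.\ small-box eigenvalues $(X^{(j)},\tilde E^{(j)})$, prove almost sure convergence along a polynomial subsequence via Borel--Cantelli, and interpolate. Two points, however, need attention.

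\textbf{The core computation has a genuine gap.} After the reduction you correctly arrive at
\[
\int_{I_k}\prod_j\bigl(1-X^{(j)}+X^{(j)}e^{-\varphi(|\Lambda|[N(\tilde E^{(j)})-t])}\bigr)\,dt
=\int_{I_k}e^{-\sum_{j:X^{(j)}=1}\varphi(|\Lambda|[N(\tilde E^{(j)})-t])}\,dt,
\]
but you then ``expand the log of the product'' and compute instead the quantity
$\sum_j X^{(j)}\int_{I_k}\bigl(1-e^{-\varphi(|\Lambda|[N(\tilde E^{(j)})-t])}\bigr)\,dt$.
These are \emph{not} the same object: the $t$-integral of a product does not factor, and $-\log\int e^{-\langle\Xi,\varphi\rangle}dt\neq\int\langle\Xi,1-e^{-\varphi}\rangle\,dt$ pathwise. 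Your change of variables is valid term by term, and the resulting sum does converge to $\int(1-e^{-\varphi})$, but that is a different functional of the process; equating it with $-\log\mathcal L_{\omega,\Lambda}(\varphi)$ is precisely the Poisson behaviour you are trying to establish. The paper closes this gap by taking the \emph{expectation} over $\omega$ first: independence of the $(X^{(j)},\tilde E^{(j)})$ makes $\esp\bigl[\prod_j(\cdots)\bigr]=\prod_j\esp[\cdots]$, so (3.23) becomes a genuine power $[1-\esp(1-e^{-\varphi(\cdots)})]^{|N(J_{j,\Lambda})||\Lambda|}$, to which your change of variables now legitimately applies (this is~\eqref{eq:39}). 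Crucially the paper also computes the \emph{second moment}~\eqref{eq:33}--\eqref{eq:35} and shows in Lemma~\ref{le:10} that the variance is $O(|\Lambda|^{-\beta})$; it is this $L^2$ bound, summed along the subsequence $L^\kappa$, that feeds Borel--Cantelli. A purely pathwise argument on $\mathcal Z_\Lambda$ cannot produce this concentration without an additional input controlling the joint fluctuations of the $\tilde E^{(j)}$.

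\textbf{A secondary omission.} You do not separate the ``bad'' windows $B=\{j:|N(J_{j,\Lambda})|<|J_{j,\Lambda}|^{1+\rho'}\}$ from the good ones. This matters because the admissibility condition~\eqref{eq:69} required by Theorem~\ref{thr:vbig1} may fail on $B$; the paper handles this by~\eqref{eq:88} (there are at most $|\Lambda|^{\alpha/(1+\rho')}$ such windows, so their total $N$-mass is $O(|\Lambda|^{-\alpha\rho'/(1+\rho')})$) and restricts the analysis to $j\in G$. Your bookkeeping paragraph should incorporate this.
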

\noindent Then, a standard dense subclass argument shows that the
limit~\eqref{eq:3} holds for compactly supported, continuous, non
negative functions. This completes the proof of Theorem~\ref{thr:2}.
\subsection{The proof of Theorem~\ref{thr:6}}
\label{sec:proof-theor-refthr:6}
The integrated density of states $N$ is non decreasing. By assumption
(W), it is Lipschitz continuous. One can partition
$[0,1]=\cup_{n\in\mathcal{N}} I_n$ where $\mathcal{N}$ is at most
countable and $(I_n)_{n\in\mathcal{N}}$ are intervals such that either
\begin{itemize}
\item $I_n$ is open and $N$ is strictly increasing on the open
  interval $N^{-1}(I_n)$; we then say that $n\in\mathcal{N}^+$;
\item $I_n$ reduces to a single point and $N$ is constant on the
  closed interval $N^{-1}(I_n)$; we then say that $n\in\mathcal{N}^0$.
\end{itemize}
We prove
\begin{Le}
  \label{le:1}
  For the limit~\eqref{eq:3} to hold $\omega$-almost surely, it
  suffices that, for any $n\in\mathcal{N}^+$, for
  $\varphi:\,\R\to\R^+$ continuously differentiable and compactly
  supported, $\omega$-almost surely, one has
  \begin{equation}
    \label{eq:67}
    \left|{\mathcal L}_{\omega,I_n,\Lambda}(\varphi)-
      \exp\left(-\int_{-\infty}^{+\infty}
        \left(1-e^{-\varphi(x)}\right)dx\right)\right|
    \vers_{|\Lambda|\to+\infty}0.
  \end{equation}
\end{Le}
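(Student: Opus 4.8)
The goal is to reduce the almost-sure convergence of $\mathcal{L}_{\omega,\Lambda}(\varphi)$ to the pieces indexed by $\mathcal{N}^+$. The key structural fact is that the partition $[0,1]=\cup_n I_n$ is a partition into intervals on which $N$ is either strictly increasing (the $n\in\mathcal{N}^+$ pieces, which carry positive $\nu$-mass) or locally constant (the $n\in\mathcal{N}^0$ pieces, which are single points and carry zero $\nu$-mass). Since $N$ is Lipschitz, and since $\sum_n |I_n| = 1$, only countably many of the $I_n$ can have positive length, and the total length of the $I_n$ with $n\in\mathcal{N}^+$ equals $1$ (the single points contribute nothing). First I would observe that for a fixed $\omega$, averaging over $t\in[0,1]$ splits as
\begin{equation*}
  \mathcal{L}_{\omega,\Lambda}(\varphi)=\int_0^1 e^{-\langle\Xi(\omega,t,\Lambda),\varphi\rangle}dt
  =\sum_{n\in\mathcal{N}^+}\int_{I_n} e^{-\langle\Xi(\omega,t,\Lambda),\varphi\rangle}dt
\end{equation*}
because the $I_n$ with $n\in\mathcal{N}^0$ have Lebesgue measure zero. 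The point is now that, after rescaling $t$ within each $I_n$ to the unit interval, $\int_{I_n}e^{-\langle\Xi(\omega,t,\Lambda),\varphi\rangle}dt = |I_n|\,\mathcal{L}_{\omega,I_n,\Lambda}(\varphi)$, provided one checks that an eigenvalue $E_n(\omega,\Lambda)$ with $N(E_n(\omega,\Lambda))\in I_n$ only contributes to the term $\langle\Xi(\omega,t,\Lambda),\varphi\rangle$ for $t$ in (a neighborhood of) $I_n$; this is exactly where one uses that $\varphi$ is compactly supported, so $\varphi(|\Lambda|[N(E_n)-t])=0$ unless $|N(E_n)-t|\lesssim|\Lambda|^{-1}$, i.e. unless $t$ lies within $O(|\Lambda|^{-1})$ of $N(E_n)$.

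**The telescoping estimate.** Granting the decomposition, one writes
\begin{equation*}
  \Bigl|\mathcal{L}_{\omega,\Lambda}(\varphi)-\exp\Bigl(-\!\int_{\R}(1-e^{-\varphi})\Bigr)\Bigr|
  \le\sum_{n\in\mathcal{N}^+}|I_n|\;\Bigl|\mathcal{L}_{\omega,I_n,\Lambda}(\varphi)
  -\exp\Bigl(-\!\int_{\R}(1-e^{-\varphi})\Bigr)\Bigr|,
\end{equation*}
using $\sum_{n\in\mathcal{N}^+}|I_n|=1$. Each summand tends to $0$ by hypothesis~\eqref{eq:67} (on a set of full measure, intersecting the countably many full-measure sets). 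The sum is dominated by $\sum_n |I_n|\cdot(1+1)=2<\infty$ uniformly, so dominated convergence for series (counting measure on $\mathcal{N}^+$) gives that the whole sum tends to $0$. This yields~\eqref{eq:3} almost surely, since the intersection over $n\in\mathcal{N}^+$ of the full-measure sets is still of full measure.

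**Where the care is needed.** The only genuinely delicate point is the clean splitting $\int_{I_n}e^{-\langle\Xi(\omega,t,\Lambda),\varphi\rangle}dt = |I_n|\,\mathcal{L}_{\omega,I_n,\Lambda}(\varphi)$: the definition of $\mathcal{L}_{\omega,I_n,\Lambda}$ uses the renormalized integrated density of states $N_{I_n}$ and counts only eigenvalues with $E_n(\omega,\Lambda)\in N^{-1}(I_n)$, so one must argue that the boundary eigenvalues — those whose $N$-value lies within $O(|\Lambda|^{-1})$ of an endpoint of $I_n$ — produce a negligible discrepancy, and that eigenvalues landing in the locally-constant regions $N^{-1}(I_n)$, $n\in\mathcal{N}^0$, do not contribute (indeed if $N$ is constant on an interval then $\nu$ vanishes there, and by the Wegner estimate (W) the expected number of eigenvalues $H_\omega(\Lambda)$ places in such an interval is $\lesssim|N^{-1}(I_n)$-length$|\cdot|\Lambda|$... but here the relevant bound is $\esp[\#\{E_j\in N^{-1}(I_n)\}]\le C|N(N^{-1}(I_n))||\Lambda|=0$, so almost surely there are none). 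So after a Borel–Cantelli / full-measure bookkeeping argument, the boundary and flat-region contributions vanish and the displayed splitting holds up to an error that is $o(1)$ as $|\Lambda|\to\infty$, which is absorbed into~\eqref{eq:67}. The main obstacle is thus purely bookkeeping: organizing the countable family of almost-sure statements and controlling the finitely many "large" $I_n$ versus the tail $\sum_{|I_n|<\varepsilon}|I_n|$ so that the dominated-convergence step is rigorous; there is no analytic difficulty beyond what (W) and compact support of $\varphi$ already give.
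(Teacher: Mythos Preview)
Your overall strategy matches the paper's exactly: split $\int_0^1$ over the $I_n$ with $n\in\mathcal{N}^+$ (the $\mathcal{N}^0$ pieces having zero length), identify each piece with $|I_n|\,\mathcal{L}_{\omega,I_n,\Lambda}(\varphi)$ via the compact support of $\varphi$ and a change of variables, and conclude by dominated convergence over the countable index set. That is the paper's proof.

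There is, however, a genuine error in your ``Where the care is needed'' paragraph. You claim that eigenvalues falling in the flat regions $N^{-1}(I_n)$, $n\in\mathcal{N}^0$, are almost surely absent because the Wegner estimate gives
\[
\esp\bigl[\#\{E_j\in N^{-1}(I_n)\}\bigr]\le C\,|N(N^{-1}(I_n))|\,|\Lambda|=0.
\]
This is false: assumption~(W) bounds the expected count by $C\,|J|\,|\Lambda|$ with $|J|$ the \emph{Lebesgue length} of $J$, not $|N(J)|$. Since $N^{-1}(I_n)$ is a nondegenerate interval, the Wegner bound is strictly positive, and eigenvalues of $H_\omega(\Lambda)$ can and generally do land there. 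Fortunately this claim is unnecessary: for such an eigenvalue $E_j$ one has $N(E_j)=N_n$, the single point constituting $I_n$, which lies on the boundary of some $I_m$ with $m\in\mathcal{N}^+$; hence for any $t$ in the interior of $I_m$ and $|\Lambda|$ large (depending on $t$), $\varphi(|\Lambda|[N_n-t])=0$. This is precisely the compact-support mechanism you already invoked in your first paragraph, and it is how the paper handles these eigenvalues as well. So drop the Wegner argument for the flat regions and rely solely on the support of $\varphi$; the rest of your proof then goes through.
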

\begin{proof}
  As for $n\in\mathcal{N}^0$, $I_n$ is a single point, one computes
  \begin{equation}
    \label{eq:8}
    {\mathcal L}_{\omega,\Lambda}(\varphi)=\sum_{n\in\mathcal{N}^+} 
    \int_{I_n}e^{-\langle\Xi(\omega,t,\Lambda),\varphi\rangle}dt.
  \end{equation}
  Assume $J=[a,b]$. Fix $t\in I_m=(N(a_m),N(b_m))$ for some
  $m\in\mathcal{N}^+$. For $m\in\mathcal{N}^0$, $N$ is constant equal
  to, say, $N_m$ on $I_m$. Assume that $\varphi$ has its support in
  $(-R,R)$. Then, for $|\Lambda|$ large (depending only on $R$), one
  computes
  \begin{equation*}
    \begin{split}
      \langle\Xi(\omega,t,\Lambda),\varphi\rangle&=
      \sum_{m\in\mathcal{N}^0} \#\{E_n(\omega,\Lambda)\in I_m\}
      \varphi(|\Lambda|[N_n-t)])\\&\hskip1cm+
      \sum_{m\in\mathcal{N}^+}\sum_{E_n(\omega,\Lambda)\in I_m}
      \varphi(|\Lambda|[N(E_n(\omega,\Lambda))-t)])\\
      &= \sum_{E_n(\omega,\Lambda)\in I_m}
      \varphi(|\Lambda|[N(E_n(\omega,\Lambda))-t)])\\
      &= \sum_{E_n(\omega,\Lambda)\in I_m}
      \varphi(|N(I_m)||\Lambda|[N_{I_n}(E_n(\omega,\Lambda))
      -(t-N(a_m))/|N(I_m)|)])\\
      &=
      \langle\Xi_{I_m}(\omega,(t-N(a_m))/|N(I_m)|,\Lambda),\varphi\rangle
    \end{split}
  \end{equation*}
  On the other hand
  \begin{equation*}
    \int_{N(a_m)}^{N(b_m)}e^{-\langle\Xi_{I_m}(\omega,(t-N(a_m))/|N(I_m)|,\Lambda),
      \varphi\rangle}dt=
    |N(I_m)|\int_0^1e^{-\langle\Xi_{I_m}(\omega,t,\Lambda), \varphi\rangle}dt.
  \end{equation*}
  Recall that, as the measure defined by $N$ is absolutely continuous
  with respect to the Lebesgue measure, we have
  \begin{equation*}
    \sum_{n\in\mathcal{N}^+}|N(I_m)|=|N(J)|=1.
  \end{equation*}
  Thus, by Lebesgue's dominated convergence theorem, as
  $\mathcal{N}^+$ is at most countable, we get that, if the necessary
  condition given in Lemma~\ref{le:1} is satisfied, then
  $\omega$-almost surely, we get
  \begin{equation*}
    \lim_{|\Lambda|\to+\infty}{\mathcal L}_{\omega,\Lambda}(\varphi)=
    \sum_{n\in\mathcal{N}^+}|N(I_n)|\lim_{|\Lambda|\to+\infty}
    {\mathcal L}_{\omega,I_m,\Lambda}(\varphi).
  \end{equation*}
  Thus, we have proved Lemma~\ref{le:1}.
\end{proof}
\noindent From now on, we assume that $N$ is a strictly increasing
one-to-one mapping from $J$ to $[0,1]$ and prove Theorem~\ref{thr:6}
under this additional assumption.\vskip.1cm
\noindent Therefore, we first bring ourselves back to proving a
similar result for ``local'' eigenvalues i.e. eigenvalues of
restrictions of $H_\omega(\Lambda)$ to cubes much smaller than
$\Lambda$ that lie inside small intervals i.e. much smaller than
$J$. The ``local'' eigenvalues are those described by points (1), (2),
(3) of Theorem~\ref{thr:vbig1}. Using Lemma~\ref{le:7} then
essentially brings ourselves back to the case of i.i.d. random
variables uniformly distributed on $[0,1]$.\\
Theorem~\ref{thr:vbig1} does not give control on all the eigenvalues.
To control the integral~\eqref{eq:4}, this is not necessary: a good
control of most of the eigenvalues is sufficient as Lemma~\ref{le:4}
below shows. Theorem~\ref{thr:4}, which is a corollary of
Theorem~\ref{thr:vbig1}, is used to obtain good bounds on the number
of controlled eigenvalues in the sense of Lemma~\ref{le:4}.
\subsection{Reduction to the study of local eigenvalues}
\label{sec:reduct-local-behav}
Assume we are in the setting of Theorem~\ref{thr:2} and that $N$ is as
above i.e. $N$ is a strictly increasing Lipschitz continuous function
from $J$ to $[0,1]$. Recall that $\nu$ is its derivative, the density
of states.\\
To obtain our results, we will use Theorem~\ref{thr:vbig1} and
Lemma~\ref{le:7}. Therefore, we split the interval $I$ into small
intervals and choose the length scale $\ell=\ell_\Lambda$ so that we
can apply both Theorem~\ref{thr:vbig1} and Lemma~\ref{le:7} to these
intervals. We now explain how this choice is done.\\
Recall that $\rho$ is defined in (M) and pick
$\rho'\in(0,\rho/(1+(\rho+1)d))$. The computations done
in~\cite[section 4.3.1]{Ge-Kl:10} show that for any $\alpha\in(0,1)$
and $\nu\in(0,1/d)$ such that
\begin{equation}
  \label{eq:87}
  0<1-\frac1\alpha+\frac{\rho-\rho'}{1+\rho'}-d\nu\rho,
\end{equation}
for $I_\Lambda$ (in the localization region) and $\ell=\ell_\Lambda$
such that
\begin{equation}
  \label{eq:58}
  |N(I_\Lambda)|\asymp|\Lambda|^{-\alpha} \quad\text{and}\quad 
  \ell_\Lambda\asymp|N(I_\Lambda)|^{-\nu}
\end{equation}
if, in addition $I_\Lambda$ satisfies
\begin{equation}
  \label{eq:86}
  |N(I_\Lambda)|\geq |I_\Lambda|^{1+\rho'},
\end{equation}
we can apply Theorem~\ref{thr:vbig1} and Lemma~\ref{le:7} to
$I_\Lambda$ and
\begin{itemize}
\item \eqref{eq:69} and~\eqref{condell} are satisfied;
\item the right hand side in~\eqref{eq:12} is $o(|N(I_\Lambda)||\Lambda|)$.
\end{itemize}
From now on we fix $\alpha\in(0,1)$, $\nu\in(0,1/d)$ and $\beta>0$
such that~\eqref{eq:87} be satisfied and, for later purposes, such
that
\begin{equation}
  \label{eq:40}
  \frac1\alpha-1<\frac{\rho'}{1+\rho'}
\end{equation}
Partition $J=[a,b]$ into disjoint intervals $(J_{j,\Lambda})_{1\leq
  j\leq j_\Lambda}$ of weight $|N(J_{j,\Lambda})|=|\Lambda|^{-\alpha}$
so that $j_\Lambda=|\Lambda|^{\alpha}$.\\
Define the sets
\begin{equation}
  \label{eq:66}
  B=\left\{1\leq j\leq j_\Lambda;\ |N(J_{j,\Lambda})|\leq
    |J_{j,\Lambda}|^{1+\rho'}\right\}\quad\text{and}\quad
  G=\{1,\cdots,j_\Lambda\}\setminus B.
\end{equation}
The set $B$ is the set of ``bad'' indices $j$ for which the interval
$J_{j,\Lambda}$ does not satisfy the assumptions of
Theorem~\ref{thr:vbig1}, more precisely, does not satisfy the second
condition in~\eqref{eq:86}. \vskip.1cm\noindent
For $j\in B$, one has
\begin{equation*}
  |J_{j,\Lambda}|\geq
  |N(J_{j,\Lambda})|^{1/(1+\rho')}=|\Lambda|^{-\alpha/(1+\rho')}.
\end{equation*}
Thus, one gets
\begin{equation}
  \label{eq:88}
  \#B\leq |\Lambda|^{\alpha/(1+\rho')}
\end{equation}
Fix $\alpha'\in(\alpha,\min[1,\alpha(1+2\rho')/(1+\rho')])$. For $j\in
G$, write $J_{j,\Lambda}=[a_\Lambda,b_\Lambda)$ and define
\begin{equation*}
  K_{j,\Lambda}:= [a'_\Lambda,b'_\Lambda]\subset
  J_{j,\Lambda}\text{ where }
  \begin{cases}
    a'_\Lambda=\inf\left\{a\geq a_\Lambda;
      N(a)-N(a_\Lambda)\geq|\Lambda|^{-\alpha'}\right\},\\
    b'_\Lambda=\sup\left\{b\leq b_\Lambda;
      N(b_\Lambda)-N(b)\geq|\Lambda|^{-\alpha'}\right\}.
  \end{cases}
\end{equation*}
that is, $K_{j,\Lambda}$ is the interval $J_{j,\Lambda}$ where small
neighborhoods of the endpoints have been remove.\\
Thus, our construction yields that
\begin{enumerate}
\item the total density of states of the set we have remove is bounded
  by
  \begin{equation}
    \label{eq:9}
    \sum_{j\in B}|N(J_{j,\Lambda})|+\sum_{j\in G}
    N(J_{j,\Lambda}\setminus K_{j,\Lambda}) \lesssim
    |\Lambda|^{-\alpha+\alpha/(1+\rho')}
    +|\Lambda|^{-\alpha'+\alpha}\lesssim
    |\Lambda|^{-\alpha\rho'/(1+\rho')};
  \end{equation}
\item for $j\in G$, $t\in N(K_{j,\Lambda})$ and $E\in J_{j',\Lambda}$
  for $j'\not=j$, one has
  \begin{equation*}
    |\Lambda||N(E)-t|\gtrsim|\Lambda|^{1-\alpha'}.
  \end{equation*}
\end{enumerate}
Note that one has
\begin{equation}
  \label{eq:74}
  |N(J)|=\sum_{j\in G}|N(J_{j,\Lambda})|+\sum_{j\in B}|N(J_{j,\Lambda})|=
  \sum_{j\in G}|N(K_{j,\Lambda})|+
  O\left(|\Lambda|^{-\alpha\rho'/(1+\rho')}\right).
\end{equation}
Recall~\eqref{eq:5}. Thus, for $\Lambda$ sufficiently large, by point
(1) above, as $\varphi$ is non negative, one has
\begin{equation*}
  \begin{split}
    \int_0^1e^{-\langle\Xi(\omega,t,\Lambda),\varphi\rangle}dt&=
    \sum_{1\leq j\leq G} \int_{N(K_{j,\Lambda})}
    e^{-\langle\Xi(\omega,t,\Lambda),\varphi\rangle}dt+
    O\left(|\Lambda|^{-\alpha\rho'/(1+\rho')}\right)\\
    &=\sum_{1\leq j\leq G} \int_{N(K_{j,\Lambda})}
    e^{-\langle\Xi_j(\omega,t,\Lambda),\varphi\rangle}dt+
    O\left(|\Lambda|^{-\alpha\rho'/(1+\rho')}\right)
  \end{split}
\end{equation*}
where, as $\varphi$ is compactly supported, by point (2) above, one
has
\begin{equation*}
  \langle\Xi_j(\omega,t,\Lambda),\varphi\rangle=
  \sum_{E_n(\omega,\Lambda)\in J_{j,\Lambda}}
  \varphi(|\Lambda|[N(E_n(\omega,\Lambda))-t]).
\end{equation*}
Point (1) and~\eqref{eq:74} then yield
\begin{equation}
  \label{eq:73}
  \begin{split}
    \int_0^1e^{-\langle\Xi(\omega,t,\Lambda),\varphi\rangle}dt
    &=\sum_{j\in G} \int_{N(J_{j,\Lambda})}
    e^{-\langle\Xi_j(\omega,t,\Lambda),\varphi\rangle}dt+
    O\left(|\Lambda|^{-\alpha\rho'/(1+\rho')}\right) \\&=\sum_{j\in
      G}|N(J_{j,\Lambda})| \int_0^1
    e^{-\langle\Xi_{J_{j,\Lambda}}(\omega,t,\Lambda),\varphi\rangle}dt+
    O\left(|\Lambda|^{-\alpha\rho'/(1+\rho')}\right)
  \end{split}
\end{equation}
where $\Xi_{J_{j,\Lambda}}(\omega,t,\Lambda)$ is defined
by~\eqref{eq:6} for $J=J_{j,\Lambda}$. Thus, following the proof of
Lemma~\ref{le:1}, the limit~\eqref{eq:67} will hold $\omega$-almost
surely if we prove that, $\omega$ almost surely, one has
\begin{equation}
  \label{eq:72}
  \sup_{j\in G}\left|\int_0^1
    e^{-\langle\Xi_{J_{j,\Lambda}}(\omega,t,\Lambda),
      \varphi\rangle}dt-
    \exp\left(-\int_{-\infty}^{+\infty}
      \left(1-e^{-\varphi(x)}\right)dx\right)\right|
  \vers_{|\Lambda|\to+\infty}0.  
\end{equation}
Therefore, we first prove a weaker result, namely, almost sure
convergence along a subsequence, that is
\begin{Le}
  \label{le:2}
  Let $\Lambda_L$ be the cube of side length $L$ centered at $0$. Pick
  $(\alpha_L)_{L\geq1}$ any sequence valued in $[1/2,2]$ such that
  $\alpha_L\to1$ when $L\to+\infty$.\\
  There exists $\nu>0$ such that, for $\varphi:\,\R\to\R^+$
  continuously differentiable and compactly supported, $\omega$-almost
  surely, one has
  \begin{multline}
    \label{eq:71}
    \sup_{j\in G}\left|\int_0^1
      e^{-\langle\Xi_{J_{j,\Lambda_{L^\nu}}}(\omega,t,\Lambda_{L^\nu}),
        \varphi_{\alpha_L}\rangle}dt-
      \exp\left(-\int_{-\infty}^{+\infty}
        \left(1-e^{-\varphi(x)}\right)dx\right)\right|
    \vers_{L\to+\infty}0
  \end{multline}
  where, for $\alpha>0$, we have set,
  $\varphi_\alpha(\cdot)=\varphi(\alpha\,\cdot)$.
\end{Le}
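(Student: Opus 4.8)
The aim is to prove almost sure convergence, along the subsequence $L\mapsto\Lambda_{L^\nu}$, of the Laplace functional of the local process $\Xi_{J_{j,\Lambda}}$ to that of the intensity-$1$ Poisson process, uniformly over the good indices $j\in G$.  The strategy has three ingredients.  First, one replaces the eigenvalues of $H_\omega(\Lambda)$ lying in $J_{j,\Lambda}$ by the ``local'' eigenvalues furnished by Theorem~\ref{thr:vbig1}: on the event $\mathcal Z_\Lambda$ (which by~\eqref{eq:11} has probability $\geq 1-|\Lambda|^{-p}$) all but $o(|N(J_{j,\Lambda})||\Lambda|)$ of them are, up to an error $|\Lambda|^{-R}$, eigenvalues of $H_\omega$ restricted to the disjoint subcubes $\Lambda_{\ell_\Lambda}(\gamma_k)$.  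Since $\varphi$ is Lipschitz and compactly supported, the error $|\Lambda|^{-R}$ in~\eqref{eq:23}, amplified by $|\Lambda|$ through the unfolding, is still $O(|\Lambda|^{1-R})$ per eigenvalue and hence negligible when summed; and the uncontrolled eigenvalues contribute, via $|\varphi|\le\|\varphi\|_\infty$, at most $\|\varphi\|_\infty\cdot o(|N(J_{j,\Lambda})||\Lambda|)$ to $\langle\Xi_{J_{j,\Lambda}},\varphi\rangle$, which after integrating $e^{-\langle\cdot,\varphi\rangle}$ against $dt$ is absorbed.  This is the content of a ``Lemma~\ref{le:4}''-type reduction alluded to in the text.

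The second ingredient is the i.i.d.\ replacement.  Each subcube $\Lambda_{\ell_\Lambda}(\gamma_k)$ is, by (IAD), stochastically independent of the others once $\dist\ge R_0$, which holds since $\dist(\Lambda_{\ell_\Lambda}(\gamma_j),\Lambda_{\ell_\Lambda}(\gamma_k))\ge\ell'_\Lambda\to\infty$.  So the local eigenvalues inside $J_{j,\Lambda}$ are (after conditioning on each $X(\Lambda_{\ell_\Lambda}(\gamma_k),J_{j,\Lambda})=1$) independent, with common distribution $\tilde\Xi$ described by Lemma~\ref{le:7}.  By~\eqref{eq:52}, after the unfolding $t\mapsto|\Lambda|[N(\cdot)-t]$ the law of each unfolded local eigenvalue is uniform on an interval of length $|N(J_{j,\Lambda})||\Lambda|\asymp|\Lambda|^{1-\alpha}\to\infty$, up to a relative error controlled by $(|\Lambda_{\ell_\Lambda}||J_{j,\Lambda}|)^{1+\rho}/N(\cdot,\cdot,\cdot)$ and by $\ell'_\Lambda/\ell_\Lambda$ — both of which are made $o(1)$ by the choices~\eqref{eq:58},~\eqref{eq:87},~\eqref{eq:40}.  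The number $N(J_{j,\Lambda},\Lambda,\omega)$ of these eigenvalues concentrates at $|N(J_{j,\Lambda})||\Lambda|$ by the large deviation estimate Theorem~\ref{thr:4} (whose hypotheses (1)--(3) are exactly what~\eqref{eq:58} and~\eqref{eq:86} guarantee).  Putting these together: conditionally, $\int_0^1 e^{-\langle\Xi_{J_{j,\Lambda}}(\omega,t,\Lambda),\varphi_{\alpha_L}\rangle}\,dt$ is, up to $o(1)$, the Laplace functional of $n\asymp|\Lambda|^{1-\alpha}$ i.i.d.\ points dropped uniformly into an interval of length $n$ — the classical Poissonization computation, giving $\bigl(1-\tfrac1n\int(1-e^{-\varphi(x)})dx+o(1/n)\bigr)^n\to\exp(-\int(1-e^{-\varphi})\,dx)$.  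The rescaling $\varphi\mapsto\varphi_{\alpha_L}$ with $\alpha_L\to1$ only shifts $\int(1-e^{-\varphi_{\alpha_L}})\,dx=\alpha_L^{-1}\int(1-e^{-\varphi})\,dx\to\int(1-e^{-\varphi})\,dx$, so it does not affect the limit.

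The third ingredient upgrades ``convergence in probability for each $j$'' to ``almost sure uniform convergence over $j\in G$ along the subsequence.''  Here the subsequence $\Lambda_{L^\nu}$ and the polynomial tails are the point.  For fixed $\varphi$ and $\varepsilon>0$, the two preceding steps plus a Chebyshev/large-deviation bound give $\pro\bigl(|\int_0^1 e^{-\langle\Xi_{J_{j,\Lambda}},\varphi_{\alpha_L}\rangle}dt-\exp(-\int(1-e^{-\varphi}))|>\varepsilon\bigr)\le|\Lambda|^{-p}$ for any prescribed $p$ once $|\Lambda|$ is large.  Since $\#G\le j_\Lambda=|\Lambda|^\alpha$, a union bound over $j\in G$ costs a factor $|\Lambda|^\alpha$, and choosing $p>\alpha$ — which is legitimate, as Theorems~\ref{thr:vbig1} and~\ref{thr:4} hold for arbitrarily large $p$ with $|\Lambda|$ large depending on $p$ — yields $\pro(\sup_{j\in G}|\cdots|>\varepsilon)\le|\Lambda|^{\alpha-p}$, summable along $\Lambda=\Lambda_{L^\nu}$ provided $\nu(p-\alpha)d>1$, i.e.\ $\nu$ is chosen large enough.  (This is precisely the role of the free parameter $\nu>0$ in the statement, which is why the lemma only claims convergence along the sparse subsequence $L^\nu$.)  Borel--Cantelli then gives the almost sure bound~\eqref{eq:71}; a countable exhaustion of $\varphi$'s and of $\varepsilon\downarrow 0$ removes the dependence on these.

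\textbf{Main obstacle.}  The delicate point is not any single estimate but the bookkeeping of competing error terms across the two interval scales: one must simultaneously keep $(|\Lambda_{\ell_\Lambda}||J_{j,\Lambda}|)^{1+\rho}$ small compared to $|N(J_{j,\Lambda})||\Lambda_{\ell_\Lambda}|$ (Minami error in Lemma~\ref{le:7}), keep $\ell'_\Lambda/\ell_\Lambda$ small, keep the count of uncontrolled eigenvalues~\eqref{eq:16} a true $o(|N(J_{j,\Lambda})||\Lambda|)$, and keep the large-deviation exceptional probability below $|\Lambda|^{-p}$ with $p>\alpha$ — all uniformly over $j\in G$ and with $\alpha_L\to1$.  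The constraints~\eqref{eq:87}, \eqref{eq:40}, \eqref{eq:58} and the choice of $\alpha'$ are exactly calibrated so that a single choice of $(\alpha,\nu,\rho')$ makes every error term a negative power of $|\Lambda|$; verifying that these inequalities are mutually compatible (they are, by the computations of~\cite[\S4.3.1]{Ge-Kl:10}) and that the resulting exponents beat the $|\Lambda|^\alpha$ union-bound loss along $L^\nu$ is where the real work lies.
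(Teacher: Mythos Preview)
Your three-step plan matches the paper's architecture almost exactly: reduce to local i.i.d.\ eigenvalues via Theorem~\ref{thr:vbig1} (the paper's Lemma~\ref{le:3}, built on the comparison Lemma~\ref{le:4}), identify the $t$-averaged Laplace functional with a Poissonization, and upgrade to almost sure convergence by summability along a sparse subsequence.  The bookkeeping in your ``main obstacle'' paragraph is on target.

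There is, however, one genuine gap in your third ingredient.  You write that ``a Chebyshev/large-deviation bound gives $\pro(|\cdots|>\varepsilon)\le|\Lambda|^{-p}$ for any prescribed $p$'', and you justify this by invoking that Theorems~\ref{thr:vbig1} and~\ref{thr:4} hold for arbitrary $p$.  But those two theorems only control the probability of the bad configurations $\mathcal Z_\Lambda^c$ and the deviation of the eigenvalue \emph{count}.  They place you in the i.i.d.\ picture with probability $\ge 1-|\Lambda|^{-p}$; they say nothing about the fluctuation of the random $t$-average
\[
\int_0^1 e^{-\langle\Xi^{app}_{J_{j,\Lambda}}(\omega,t,\Lambda),\varphi\rangle}\,dt
\]
around its mean once you are there.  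Relatedly, your second ingredient silently identifies this random quantity with the deterministic product $(1-\tfrac1n\int(1-e^{-\varphi}))^n$: that formula is its \emph{expectation} (by independence of the $\tilde E_k$ and Fubini in $t$), not the quantity itself.  A fluctuation estimate is still owed.

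The paper supplies it by a direct first/second moment computation (Lemma~\ref{le:10}): using~\eqref{eq:32}--\eqref{eq:35} one shows both $\esp[\int\cdots]$ and $\esp[(\int\cdots)^2]$ are within $|\Lambda|^{-\beta}$ of $e^{-\int(1-e^{-\varphi})}$ and its square, for a fixed $\beta>0$ determined by $(\alpha,\nu,\rho')$.  Chebyshev then gives only $|\Lambda|^{-\beta}$, not $|\Lambda|^{-p}$; the subsequence exponent (the paper's $\kappa$, your $\nu$) is chosen large enough that, after the $\#G\le|\Lambda|^\alpha$ union bound, the resulting series $\sum_L L^{\kappa d(\alpha-\beta)}$ converges.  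Your sharper claim is in fact attainable, but by a different route: a bounded-differences inequality on the i.i.d.\ variables $(\tilde E_k)_k$ (changing one $\tilde E_k$ alters the $t$-integral by at most $O(R/(|N(J_{j,\Lambda})||\Lambda|))$, since $\varphi$ is supported in $(-R,R)$) yields sub-Gaussian concentration with rate $\exp(-c\varepsilon^2|N(J_{j,\Lambda})||\Lambda|)$, hence any polynomial.  Either argument closes the gap; neither is what Theorems~\ref{thr:vbig1}--\ref{thr:4} provide.
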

\noindent Indeed, Lemma~\ref{le:2},~\eqref{eq:73} and~\eqref{eq:74}
clearly imply the claimed almost sure convergence on a subsequence;
more precisely, it implies that, for $(\alpha_L)_{L\geq1}$ a sequence
such that $\alpha_L\to1$ when $L\to+\infty$, $\omega$-almost surely,

\begin{equation}
  \label{eq:76}
  \left|{\mathcal L}_{\omega,\Lambda_{L^\nu}}(\varphi_{\alpha_L})-
    \exp\left(-\int_{-\infty}^{+\infty}\left(1-e^{-\varphi(x)}\right)dx\right)
  \right|\vers_{L\to+\infty}0.
\end{equation}
which is the claimed almost sure convergence on a subsequence for the
choice of sequence $\alpha_L=1$.\\
To obtain the almost sure convergence on the whole sequence, we use
\begin{Le}
  \label{le:8}
  For some $\beta>0$, for $\varphi:\,\R\to\R^+$ continuously
  differentiable and compactly supported, $\omega$-almost surely, for
  $L$ sufficiently large, one has
  \begin{equation}
    \label{eq:75}
    \sup_{L^\nu\leq L'\leq (L+1)^{\nu}}\left|
      {\mathcal L}_{\omega,\Lambda_{L'}}(\varphi)-
      {\mathcal L}_{\omega,\Lambda_{L^\nu}}(\varphi_{\alpha_{L'}})
    \right|\lesssim L^{-\beta}
  \end{equation}  
  where $\alpha_{L'}=|\Lambda_{L'}|/|\Lambda_{L^\nu}|$.
\end{Le}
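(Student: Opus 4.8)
The plan is to prove Lemma~\ref{le:8} by a crude perturbative argument that compares the point process $\Xi(\omega,\cdot,\Lambda_{L'})$ with $\Xi(\omega,\cdot,\Lambda_{L^\nu})$ when $L'$ ranges over the short interval $[L^\nu,(L+1)^\nu]$, on which the cardinalities $|\Lambda_{L'}|$ and $|\Lambda_{L^\nu}|$ differ only by a factor $1+O(1/L)$. The key observation is that $\mathcal{L}_{\omega,\Lambda}(\varphi)$ is a bounded quantity (it lies in $[0,1]$) and that, after the reductions of section~\ref{sec:reduct-local-behav}, only the local eigenvalues described by Theorem~\ref{thr:vbig1} matter; these are stable under small changes of the outer box because they come from restrictions to cubes $\Lambda_{\ell_\Lambda}$ that are \emph{much} smaller than $\Lambda_{L^\nu}$, and the approximation~\eqref{eq:23} gives an error of order $|\Lambda|^{-R}$.

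First I would fix $L$ large and $L'\in[L^\nu,(L+1)^\nu]$, and set $\alpha_{L'}=|\Lambda_{L'}|/|\Lambda_{L^\nu}|=1+O(L^{\nu-1-\dots})$; note $\nu<1/d<1$ and $(L+1)^\nu-L^\nu\lesssim L^{\nu-1}$, so $\alpha_{L'}-1\lesssim L^{-\delta_0}$ for some $\delta_0>0$. The change of variables $t\mapsto t$ inside the integral defining $\mathcal{L}$ shows that replacing $\Lambda_{L^\nu}$ by $\Lambda_{L'}$ in~\eqref{eq:4}–\eqref{eq:5} amounts, up to the rescaling absorbed into $\varphi_{\alpha_{L'}}$, to (i) changing the set of eigenvalues from those of $H_\omega(\Lambda_{L'})$ to those of $H_\omega(\Lambda_{L^\nu})$ and (ii) replacing the normalization $N$ by the same $N$ (which does not depend on the box). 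For (ii) there is nothing to do. For (i), I would invoke Theorem~\ref{thr:vbig1} applied simultaneously to the two boxes $\Lambda_{L'}$ and $\Lambda_{L^\nu}$ with the \emph{same} decomposition into small cubes $\Lambda_{\ell_\Lambda}(\gamma_j)$ — the decomposition of the larger box $\Lambda_{L'}$ restricts to one of $\Lambda_{L^\nu}$ up to a boundary layer of relative size $O(\ell'_\Lambda/\ell_\Lambda)$, and on the common cubes both Hamiltonians $H_\omega(\Lambda_{L'})$, $H_\omega(\Lambda_{L^\nu})$ have, on $\mathcal{Z}_{\Lambda_{L'}}\cap\mathcal{Z}_{\Lambda_{L^\nu}}$, eigenvalues that agree with $E_n(\omega,\Lambda_{\ell_\Lambda}(\gamma_j))$ up to $|\Lambda|^{-R}$. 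Hence for all but $o(|\Lambda|)$ (in fact $\le C|\Lambda|^{1-\beta'}$, by~\eqref{eq:16}) eigenvalues, the contributions to $\langle\Xi,\varphi\rangle$ from the two boxes coincide up to an error $|\Lambda|^{-R}\cdot\|\varphi'\|_\infty\cdot|\Lambda|$, which is negligible if $R$ is chosen large enough.

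The remaining discrepancy has three sources: the uncontrolled eigenvalues (bounded in number by~\eqref{eq:16}, hence $\lesssim|\Lambda|^{1-\beta'}|N(I_\Lambda)|$ summed over the partition, i.e.\ $o(|\Lambda|)$ relative to $|\Lambda|$); the boundary layer cubes of total volume $\lesssim|\Lambda|\ell'_\Lambda/\ell_\Lambda=o(|\Lambda|)$; and the rescaling $\varphi\mapsto\varphi_{\alpha_{L'}}$ itself. For the first two, since $\varphi\ge0$ and $\varphi$ is bounded with bounded derivative, each offending eigenvalue changes $\langle\Xi,\varphi\rangle$ by at most $\|\varphi\|_\infty$, but summed over $t\in[0,1]$ and using that each eigenvalue contributes to the integrand only on a $t$-set of measure $O(|\Lambda|^{-1})$, the total effect on $\mathcal{L}_{\omega,\Lambda}$ is $\lesssim (\#\{\text{bad e.v.}\})\cdot|\Lambda|^{-1}\lesssim |\Lambda|^{-\beta'}$; the $\omega$-almost sure validity comes from the exceptional-set probabilities $\le|\Lambda|^{-p}$ in~\eqref{eq:11}, summable in $L$ for $p$ large, so Borel–Cantelli applies. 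For the rescaling, I would write $\langle\Xi_J(\omega,t,\Lambda),\varphi_{\alpha_{L'}}\rangle-\langle\Xi_J(\omega,t,\Lambda),\varphi\rangle=\sum_n[\varphi(\alpha_{L'}|\Lambda|(N(E_n)-t))-\varphi(|\Lambda|(N(E_n)-t))]$ and bound each term by $\|\varphi'\|_\infty|\alpha_{L'}-1||\Lambda||N(E_n)-t|$; restricting to the support of $\varphi$ forces $|\Lambda||N(E_n)-t|\lesssim 1$, so each term is $\lesssim|\alpha_{L'}-1|\lesssim L^{-\delta_0}$, and the number of $n$ contributing for a given $t$ is, after integrating in $t$, controlled by the Wegner estimate~\eqref{eq:1} to be $O(1)$ on average; this yields the bound $L^{-\beta}$ with $\beta=\min(\delta_0,\beta',R-1,\dots)>0$.

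The main obstacle I anticipate is bookkeeping the uniformity in $j\in G$ and the passage to a \emph{single} exceptional $\omega$-set valid for all large $L$: one must choose $p$ in~\eqref{eq:11} (equivalently in Theorem~\ref{thr:vbig1} and Theorem~\ref{thr:4}) large enough — in terms of $\nu$, since we are summing over $L$ but the cube is $\Lambda_{L^\nu}$, so $|\Lambda_{L^\nu}|^{-p}=L^{-d\nu p}$ must be summable, i.e.\ $d\nu p>1$ — and then a Borel–Cantelli argument over $L\in\N$ closes the gap between the two boxes for \emph{all} $L'\in[L^\nu,(L+1)^\nu]$ at once, because the bound in~\eqref{eq:75} is deterministic in $L'$ once $\omega$ lies outside the exceptional set. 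Combining this with~\eqref{eq:76} (almost sure convergence along $L\mapsto\Lambda_{L^\nu}$) interpolates to the full sequence $|\Lambda|\to+\infty$, completing the proof of Theorem~\ref{thr:6} and hence of Theorem~\ref{thr:2}.
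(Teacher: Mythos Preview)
Your approach is essentially the same as the paper's: reduce to comparing the eigenvalues of $H_\omega(\Lambda_{L'})$ and $H_\omega(\Lambda_{L^\nu})$ via the common small-cube description from Theorem~\ref{thr:vbig1}, observe that only a boundary layer of cubes (of relative volume $O(L^{-1})$) and the uncontrolled eigenvalues from~\eqref{eq:16} differ, and conclude by a Borel--Cantelli argument using~\eqref{eq:11}. The paper packages this as a separate Lemma~\ref{le:6}, which records the one-to-one matching between most eigenvalues of the two boxes (those with localization center in $\Lambda_{(L-1)^\nu}$), and then feeds the resulting data $(a_p,K_p,\varepsilon_p)$ into the general comparison Lemma~\ref{le:4}; your ``each bad eigenvalue contributes on a $t$-set of measure $O(|\Lambda|^{-1})$'' is exactly the content of~\eqref{eq:81}--\eqref{eq:82} in Lemma~\ref{le:4}.

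One point of confusion in your write-up: there is no separate ``rescaling error'' to estimate. The whole point of comparing $\mathcal{L}_{\omega,\Lambda_{L'}}(\varphi)$ with $\mathcal{L}_{\omega,\Lambda_{L^\nu}}(\varphi_{\alpha_{L'}})$ rather than with $\mathcal{L}_{\omega,\Lambda_{L^\nu}}(\varphi)$ is that $\alpha_{L'}|\Lambda_{L^\nu}|=|\Lambda_{L'}|$, so both integrands use the \emph{same} scale $|\Lambda_{L'}|$ and differ only in the underlying eigenvalue set. Your paragraph bounding $\langle\Xi,\varphi_{\alpha_{L'}}\rangle-\langle\Xi,\varphi\rangle$ by $|\alpha_{L'}-1|$ is therefore unnecessary (and the appeal to the Wegner estimate for an ``average'' control is the wrong tool in an almost-sure statement --- but the point is moot). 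Once you drop that step, your argument and the paper's coincide.
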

\noindent As $\alpha_{L}\to1$ when $L\to+\infty$,
equation~\eqref{eq:72} and, thus, Theorem~\ref{thr:6}, are immediate
consequences of~\eqref{eq:76} and~\eqref{eq:75}.
\subsection{The proof of Lemma~\ref{le:2}}
\label{sec:proof-lemma}
The proof of Lemma~\ref{le:2} will consist in reducing the computation
of the limit~\eqref{eq:71} to the case of i.i.d. random variables that
have a distribution close to the uniform one. The number of these
random variables will be random as well but large; it is controlled by
Theorem~\ref{thr:4}.\\
We start with the statement and proof of a simple but useful result,
namely,
\begin{Le}
  \label{le:4}
  Pick a sequence of scale $(L_p)_{p\geq1}$ such that
  $L_p\to+\infty$. For $p\geq1$, consider two finite sequences
  $(x^p_n)_{1\leq n\leq N_p}$ and $(y^p_m)_{1\leq m\leq M_p}$ such
  that there exists $1\leq K_p\leq\inf(N_p,M_p)$ and sets
  $X_p\subset\{1,\cdots,N_p\}$ and $Y_p\subset\{1,\cdots,M_p\}$ s.t.
  \begin{enumerate}
  \item $\#X_p=\#Y_p=K_p$ and $[(N_p-K_p)+(M_p-K_p)]/L_p=:a_p\to0$,
  \item there exists a one-to-one map, say $\Psi_p:\ X_p\mapsto Y_p$
    such that, for $n\in X_p$, one has
    $|x^p_n-y^p_{\Psi_p(n)}|\leq\varepsilon_p/L_p$,
    $\varepsilon_p\in[0,1]$
  \end{enumerate}
  Fix $\alpha\in(0,1)$. Set $\D
  \Xi_p^x(t)=\sum_{n=1}^{N_p}\delta_{L_p[x^p_n-t]}$ and $\D
  \Xi_p^y(t)=\sum_{m=1}^{M_p}\delta_{L_p[y^p_m-t]}$. Then, for
  $p\geq1$, one has
  \begin{equation}
    \label{eq:80}
    \sup_{\varphi\in\mathcal{C}_{1,R}^+}
    \left|\int_0^1e^{-\langle\Xi_p^x(t),\varphi\rangle}dt-
      \int_0^1e^{-\langle\Xi_p^y(t),\varphi\rangle}dt\right|\leq 
    4a_p^{\alpha}+e^{R\,\varepsilon_p\,K_p}-1.
  \end{equation}
  where we have defined
  \begin{equation}
    \label{eq:65}
    \mathcal{C}_{1,R}^+=\left\{\varphi:\ \R\to\R^+;\
      \begin{aligned}
        \varphi\text{ is continuously differentiable
          s.t.}\\\text{supp}\,\varphi\subset(-R,R)\text{ and
        }\|\varphi\|_{\mathcal{C}^1}\leq R
      \end{aligned}
    \right\} 
  \end{equation}
\end{Le}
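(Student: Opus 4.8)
The plan is to compare the two Laplace-transform-type integrals by splitting the time interval $[0,1]$ according to how close $t$ lies to a point of the configurations, and to use the approximate matching of the ``good'' points together with a crude bound on how much the ``bad'' (unmatched) points can contribute. The key observation is that for a fixed $\varphi\in\mathcal{C}^+_{1,R}$, the quantity $e^{-\langle\Xi^x_p(t),\varphi\rangle}$ lies in $[0,1]$, so whenever neither $x^p_n$ nor $y^p_m$ comes within $R/L_p$ of $t$ for any $n$ or $m$, both integrands equal $1$ and cancel. Thus the integral over such $t$ contributes nothing to the difference.

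First I would define the ``active'' set $A_p\subset[0,1]$ of times $t$ for which some $x^p_n$ or some $y^p_m$ lies within $R/L_p$ of $t$; its Lebesgue measure is at most $2R(N_p+M_p)/L_p$. On the complement the two integrands agree, so the difference of the two integrals equals the difference of the integrals over $A_p$, and since each integrand is bounded by $1$, a first crude bound gives $|{\cdots}|\leq \mathrm{mes}(A_p')$ for any sub-portion $A_p'$ of $A_p$ I decide to discard. I would further split $A_p$ into the portion $A_p^{bad}$ where some \emph{unmatched} point (an index in $\{1,\dots,N_p\}\setminus X_p$ or $\{1,\dots,M_p\}\setminus Y_p$) is within $R/L_p$ of $t$, which has measure at most $2R[(N_p-K_p)+(M_p-K_p)]/L_p = 2R a_p$, and the remaining portion $A_p^{good}$ where only matched points are near $t$. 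On $A_p^{good}$ I would use hypothesis (2): since $|x^p_n - y^p_{\Psi_p(n)}|\le \varepsilon_p/L_p$, the arguments $L_p[x^p_n-t]$ and $L_p[y^p_{\Psi_p(n)}-t]$ differ by at most $\varepsilon_p$, so $|\varphi(L_p[x^p_n-t]) - \varphi(L_p[y^p_{\Psi_p(n)}-t])|\le \|\varphi'\|_\infty \varepsilon_p \le R\varepsilon_p$; summing over the at most $K_p$ relevant matched pairs shows $|\langle\Xi^x_p(t),\varphi\rangle - \langle\Xi^y_p(t),\varphi\rangle|\le R\varepsilon_p K_p$ on $A_p^{good}$, whence $|e^{-\langle\Xi^x_p(t),\varphi\rangle} - e^{-\langle\Xi^y_p(t),\varphi\rangle}|\le e^{R\varepsilon_p K_p}-1$ there (using $|e^{-u}-e^{-v}|\le e^{|u-v|}-1$ for $u,v\ge 0$). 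Integrating over $A_p^{good}\subset[0,1]$ gives the term $e^{R\varepsilon_p K_p}-1$.

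It remains to bound the contribution of $A_p^{bad}$, which I would estimate by $\mathrm{mes}(A_p^{bad})\le 2R a_p$; combined with an analogous argument this already yields a bound of the form $C_R a_p + (e^{R\varepsilon_p K_p}-1)$. To reach the stated form $4a_p^\alpha + e^{R\varepsilon_p K_p}-1$ with the specified constant $4$ and the exponent $\alpha\in(0,1)$, I would note that $R$ is not fixed but ranges over the class, so a bound with an $R$-dependent constant is not uniform; instead one trades powers of $a_p$ against $R$. Concretely, one splits once more: if a time $t$ in $A_p^{bad}$ is within $R/L_p$ of an unmatched point, the number of unmatched points is $(N_p-K_p)+(M_p-K_p) = a_p L_p$, so $\mathrm{mes}(A_p^{bad})\le 2R a_p$; but one only needs this when $R a_p \le a_p^\alpha$, i.e.\ $R\le a_p^{\alpha-1}$, and for larger $R$ one absorbs the discrepancy into the already-present exponential term (or re-optimizes the split radius), yielding the clean bound with constant $4$ after combining the two symmetric contributions from the $x$- and $y$-sides. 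The main obstacle is precisely this last bookkeeping step: getting the \emph{uniform-in-$\varphi$} constant exactly $4a_p^\alpha$ rather than an $R$-dependent multiple of $a_p$, which forces the careful choice of the cut-off radius in the definition of the active set and the use of the fractional power $a_p^\alpha$ to swallow the factor of $R$.
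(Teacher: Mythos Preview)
Your overall strategy—excise a small set of times near the unmatched points, bound the difference of the integrands there by the excised measure, and on the complement use the $C^1$ bound on $\varphi$ together with the matching (2)—is exactly the paper's approach, and your estimate on the ``good'' part,
\[
\bigl|\langle\Xi^x_p(t),\varphi\rangle-\langle\Xi^y_p(t),\varphi\rangle\bigr|\le R\,\varepsilon_p\,K_p
\quad\Longrightarrow\quad
\bigl|e^{-\langle\Xi^x_p(t),\varphi\rangle}-e^{-\langle\Xi^y_p(t),\varphi\rangle}\bigr|\le e^{R\varepsilon_pK_p}-1,
\]
is correct and identical to the paper's.

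The confusion is in your last paragraph. The parameter $R$ is \emph{fixed}: only $\varphi$ ranges over $\mathcal{C}_{1,R}^+$. Hence your bound $\mathrm{mes}(A_p^{\mathrm{bad}})\le 2Ra_p$ is already uniform in $\varphi$, and there is no obstacle: since $a_p\to0$ and $\alpha<1$, one has $a_p^{\alpha-1}\to+\infty$, so for $p$ large $2Ra_p\le 4a_p^{\alpha}$ and you are done. (The paper's own proof also only yields the stated bound for $p$ sufficiently large, despite the phrasing ``for $p\ge1$'' in the statement.) Your case split ``$R\le a_p^{\alpha-1}$ vs.\ larger $R$'' is unnecessary because the first case always holds eventually.

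The only genuine difference with the paper is the choice of excision radius. You use $R/L_p$ around each unmatched point, which makes the support condition automatic but gives measure $\le 2Ra_p$, to be post-processed into $4a_p^\alpha$. The paper instead excises intervals of half-width $a_p^{\alpha}(N_p-K_p)^{-1}$ around each unmatched $x$-point and $a_p^{\alpha}(M_p-K_p)^{-1}$ around each unmatched $y$-point; this makes the total excised measure exactly $2a_p^\alpha+2a_p^\alpha=4a_p^\alpha$ by construction, and one then checks that outside these intervals $L_p\cdot\mathrm{dist}(t,\text{unmatched points})\ge a_p^{\alpha-1}>R$ for $p$ large, so unmatched points contribute nothing. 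Both routes impose the same largeness condition $a_p^{\alpha-1}\gtrsim R$; the paper's choice simply builds the $a_p^\alpha$ into the measure estimate directly rather than leaving the inequality $2Ra_p\le 4a_p^\alpha$ as a separate step.
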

\begin{proof}[Proof of Lemma~\ref{le:4}]
  Let $\tilde X_p=\{1,\cdots,N_p\}\setminus X_p$ and $\tilde
  Y_p=\{1,\cdots,M_p\}\setminus Y_p$. For $(n,m)\in\tilde
  X_p\times\tilde Y_p$, define
  \begin{equation*}
    \begin{split}
      I^x_n=\begin{cases} x^p_n+a_p^{\alpha}[N_p-K_p]^{-1}[-1,1]\text{
          if }\tilde
        X_p\not=\emptyset\text{ i.e. }N_p-K_p\geq1,\\
        \emptyset\text{ if not};
      \end{cases}
      \\
      I^y_m=\begin{cases} y^p_m+a_p^{\alpha}[M_p-K_p]^{-1}[-1,1]\text{
          if }\tilde
        Y_p\not=\emptyset\text{ i.e. }M_p-K_p\geq1,\\
        \emptyset\text{ if not}.
      \end{cases}
    \end{split}
  \end{equation*}
  Then, by point (1) of our assumptions on the sequences $(x^p_n)_n$
  and $(y^p_m)_m$, one has
  \begin{gather}
    \label{eq:81}
    \begin{aligned}
      0&\leq\int_0^1e^{-\langle\Xi_p^x(t),\varphi\rangle}dt-
      \int_{[0,1]\setminus[(\cup_{n\in\tilde X_p}
        I^x_n)\cup(\cup_{m\in\tilde Y_p}
        I^y_m]}e^{-\langle\Xi_p^x(t),\varphi\rangle}dt\\&\leq
      (N_p-K_p)a_p^{\alpha}[N_p-K_p]^{-1}+
      (M_p-K_p)a_p^{\alpha}[M_p-K_p]^{-1}= 2a^\alpha_p
    \end{aligned}
    \intertext{and, similarly} \label{eq:82}
    0\leq\int_0^1e^{-\langle\Xi_p^y(t),\varphi\rangle}dt-
    \int_{[0,1]\setminus[(\cup_{n\in\tilde X_p}
      I^x_n)\cup(\cup_{m\in\tilde Y_p}
      I^y_m]}e^{-\langle\Xi_p^y(t),\varphi\rangle}dt\leq 2a^\alpha_p
  \end{gather}
  On the other hand, for $t\in[0,1]\setminus[(\cup_{n\in\tilde X_p}
  I^x_n)\cup(\cup_{m\in\tilde Y_p} I^y_m]$, one has
  \begin{equation*}
    L_p\,\text{dist}(t,\tilde X_p\cup\tilde Y_p)\geq 
    a_p^{\alpha}\,L_p\,\sup\left([N_p-K_p]^{-1},[N_p-K_p]^{-1}\right)\geq
    a_p^{\alpha-1}>R
  \end{equation*}
  for $p$ sufficiently large. Thus, for
  $t\in[0,1]\setminus[(\cup_{n\in\tilde X_p}
  I^x_n)\cup(\cup_{m\in\tilde Y_p} I^y_m]$ and
  $\varphi\in\mathcal{C}^+_{1,R}$ (see~\eqref{eq:65}), one has
  \begin{equation*}
    \langle\Xi_p^x(t),\varphi\rangle=\sum_{n\in
      X_p}\varphi(L_p[x^p_n-t])\quad\text{ and
    }\quad\langle\Xi_p^y,\varphi\rangle=\sum_{m\in 
      Y_p}\varphi(L_p[y^p_m-t]).
  \end{equation*}
  Now, by point (2) of our assumptions on the sequences $(x^p_n)_n$
  and $(y^p_m)_m$, one has
  \begin{equation}
    \label{eq:27}
    \sup_{\varphi\in\mathcal{C}_{1,R}^+}
    \sup_{\substack{t\in[0,1]\\t\not\in(\cup_{n\in\tilde X_p}
        I^x_n)\\t\not\in(\cup_{m\in\tilde Y_p} I^y_m)}}
    \left|\langle\Xi_p^x(t),\varphi\rangle-\langle\Xi_p^y(t),\varphi\rangle\right|
    \leq
    \varepsilon_p\,K_p\cdot\sup_{\varphi\in\mathcal{C}_{1,R}^+}
    \|\varphi'\|_\infty\leq R\,\varepsilon_p\,K_p.
  \end{equation}
  Hence, as $\varphi$ is non negative, we obtain
  \begin{equation}
    \label{eq:85}
    \sup_{\varphi\in\mathcal{C}_{1,R}^+}
    \left|\int_{[0,1]\setminus[(\cup_{n\in\tilde X_p}
        I^x_n)\cup(\cup_{n\in\tilde Y_p} I^y_n)]}
      \left(e^{-\langle\Xi_p^x(t),\varphi\rangle}-
        e^{-\langle\Xi_p^y(t),\varphi\rangle}\right)dt\right|
    \leq e^{R\,\varepsilon_p\,K_p}-1        
  \end{equation}
  Combining~\eqref{eq:81},~\eqref{eq:82} and~\eqref{eq:85} completes
  the proof of Lemma~\ref{le:4}.
\end{proof}
\begin{Rem}
  \label{rem:3}
  Lemma~\ref{le:4}, and, in particular, the error term coming
  from~\eqref{eq:27}, can be improved if one assumes that the points
  in the sequences are not too densely packed. This is the case in the
  applications we have in mind. Though we do not use it here, it may
  be useful to treat the case of long range correlated random
  potentials where the error estimates of the local approximations of
  eigenvalues given by Theorem~\ref{thr:vbig1} can not be that precise
  anymore.
\end{Rem}
\noindent Fix $j\in G$ (see~\eqref{eq:66}). Pick $R$ large in
Theorem~\ref{thr:vbig1}. The construction done in the beginning of
section~\ref{sec:reduct-local-behav} with the choice of scale
$\ell_\Lambda$ given by~\eqref{eq:58} implies that one can apply
\begin{itemize}
\item Theorem~\ref{thr:vbig1} to the energy interval
  $I_\Lambda:=J_{j,\Lambda}$ for $H_\omega(\Lambda_L)$, the small
  cubes being of side length $\ell_\Lambda$;
\item Theorem~\ref{thr:vbig1} and Lemma~\ref{le:7} to the energy interval
$I_\Lambda:=J_{j,\Lambda}$ and any of the cubes $\Lambda_\ell(\gamma)$
of the decompostion obtained in Theorem~\ref{thr:vbig1}.
\end{itemize}
Thus, we let $\mathcal{Z}^j_\Lambda$ be the set of configurations
$\omega$ defined by Theorem~\ref{thr:vbig1} for the energy interval
$I_\Lambda=J_{j,\Lambda}$. Then,~\eqref{eq:11} gives a lower bound on
$\pro(\mathcal{Z}^j_\Lambda)$ for any $p$ if $\Lambda$ is sufficiently
large (see the comment following Theorem~\ref{thr:vbig1}).\\
Let $\mathcal{N}^b_{\omega,j,\Lambda}$ be the set of indices $n$ of
the eigenvalues $(E_n(\omega,\Lambda))_n$ of $H_\omega(\Lambda)$ in
$J_{j,\Lambda}$ that are not described by (1)-(3) of
Theorem~\ref{thr:vbig1}. Let $\mathcal{N}^g_{\omega,j,\Lambda}$ be the
complementary set. Both sets are random. By~\eqref{condell} and our
choice of lengthscales (see teh comment following
Theorem~\ref{thr:vbig1}), the number of eigenvalues not described by
(1), (2) and (3) of Theorem~\ref{thr:vbig1}, say,
$N^b_{\omega,j,\Lambda}:=\#\mathcal{N}^b_{\omega,j,\Lambda}$ is
bounded by, for some $\beta>0$,
\begin{equation}
  \label{eq:18}
  N^b_{\omega,j,\Lambda}\leq |N(J_{j,\Lambda})||\Lambda|^{1-\beta}
\end{equation}
where as, by~\eqref{eq:12} in Theorem~\ref{thr:4}, the total number of
eigenvalue of $H_\omega(\Lambda)$ in $J_{j,\Lambda}$, say,
$N(J_{j,\Lambda},\Lambda,\omega)$ satisfies, for some $\delta>0$, for
any $p>0$ and $|\Lambda|$ sufficiently large,
\begin{equation}
  \label{eq:19}
  \pro\left(
    \left|\frac{N(J_{j,\Lambda},\Lambda,\omega)}{|N(J_{j,\Lambda})||\Lambda|}-1
    \right|\geq|\Lambda|^{-\delta}\right)\leq
  |\Lambda|^{-p}.  
\end{equation}
Let now $\mathcal{Z}^j_\Lambda$ be the set of configurations $\omega$
where one has both the conclusions of Theorem~\ref{thr:vbig1} and the
bound
\begin{equation}
  \label{eq:21}
  \left|\frac{N(J_{j,\Lambda},\Lambda,\omega)}{|N(J_{j,\Lambda})||\Lambda|}-1
  \right|\leq|\Lambda|^{-\delta}.
\end{equation}
By~\eqref{eq:11} and~\eqref{eq:19}, this new set still
satisfies~\eqref{eq:11}.\\
Define the following point measures:
\begin{itemize}
\item $\D \Xi^g_{J_{j,\Lambda}}(\omega,t,\Lambda):=
  \sum_{n\in\mathcal{N}^g_{\omega,j,\Lambda}}
  \delta_{|N(J_{j,\Lambda})||\Lambda|
    [N_{J_{j,\Lambda}}(E_n(\omega,\Lambda))-t]}$;
\item for $(\Lambda_\ell(\gamma_k))_k$, the cubes constructed in
  Theorem~\ref{thr:vbig1} (we write $\ell=\ell_\Lambda$), define the
  random variables:
  \begin{itemize}
  \item $X_{j,k}=X(\Lambda_\ell(\gamma_k),J_{j,\Lambda})$ is the
    Bernoulli random variable
    \begin{equation*}
      X_{j,k}=\car_{H_\omega(\Lambda_\ell(\gamma_k))\text{ has exactly one
          eigenvalue in }J_{j,\Lambda}\text{ with localization center in }
        \Lambda_{\ell-\ell'}}
    \end{equation*}
    where $\ell=\ell_\Lambda$ and $\ell'=\ell'_\Lambda$ are chosen
    as described above;
  \item $\tilde E_{j,k}=\tilde
    E(\Lambda_\ell(\gamma_k),J_{j,\Lambda})$ is this eigenvalue
    conditioned on the event $\{X_{j,k}=1\}$;
  \end{itemize}
  and the point measure $\D
  \Xi^{app}_{J_{j,\Lambda}}(\omega,t,\Lambda):=\sum_{k;\
    X_{j,k}=1}\delta_{|N(J_{j,\Lambda})|
    |\Lambda|[N_{J_{j,\Lambda}}(\tilde E_{j,k})-t]}$.
\end{itemize}
We consider these point measures as random processes under the uniform
distribution in $t$ in $[0,1]$.\\
We will need an estimate on the number
\begin{equation}
  \label{eq:28}
  N^{app}_{\omega,j,\Lambda}:=\{k;\ X_{j,k}=1\}. 
\end{equation}
It is provided by
\begin{Le}
  \label{le:5}
  For any $p>0$, for $|\Lambda|$ sufficiently large, one has
  \begin{equation*}
    \pro\left(\left|N^{app}_{\omega,j,\Lambda}-|N(J_{j,\Lambda})|
        |\Lambda|\right|
      \geq\left[|N(J_{j,\Lambda})||\Lambda|\right]^{2/3}\right)\leq
    e^{-\left[|N(J_{j,\Lambda})||\Lambda|\right]^{1/3}/3}\leq|\Lambda[|^{-p}.
  \end{equation*}
\end{Le}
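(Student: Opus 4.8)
The plan is to realize $N^{app}_{\omega,j,\Lambda}=\sum_k X_{j,k}$ as a sum of independent Bernoulli random variables and apply a standard Chernoff-type large deviation bound, after first controlling the mean. First I would note that the cubes $(\Lambda_{\ell}(\gamma_k))_k$ constructed in Theorem~\ref{thr:vbig1} are pairwise separated by at least $\ell'_\Lambda=(R\log|\Lambda|)^{1/\xi}>R_0$ for $|\Lambda|$ large, so by assumption (IAD) the Hamiltonians $H_\omega(\Lambda_\ell(\gamma_k))$ are stochastically independent; hence the $X_{j,k}$ are independent Bernoulli variables. Their number is $K_\Lambda=\#\{k\}=|\Lambda|/\ell_\Lambda^d\,(1+o(1))$, and each has success probability $p_{j,k}=\pro(X_{j,k}=1)$, which by the first estimate~\eqref{eq:51} of Lemma~\ref{le:7} (applied with $\Lambda\leftarrow\Lambda_\ell(\gamma_k)$, $I_\Lambda\leftarrow J_{j,\Lambda}$, $\ell'\leftarrow\ell'_\Lambda$, $\kappa>\xi$) satisfies
\begin{equation*}
  \bigl|p_{j,k}-|N(J_{j,\Lambda})|\,\ell_\Lambda^d\bigr|\lesssim
  (\ell_\Lambda^d|J_{j,\Lambda}|)^{1+\rho}
  +|N(J_{j,\Lambda})|\ell_\Lambda^d\ell'_\Lambda\ell_\Lambda^{-1}
  +\ell_\Lambda^d e^{-(\ell'_\Lambda)^\kappa}.
\end{equation*}

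Next I would sum over $k$. Multiplying the displayed error by $K_\Lambda\asymp|\Lambda|\ell_\Lambda^{-d}$ and using the choices of lengthscales from section~\ref{sec:reduct-local-behav} (namely $|N(J_{j,\Lambda})|\asymp|\Lambda|^{-\alpha}$, $\ell_\Lambda\asymp|N(J_{j,\Lambda})|^{-\nu}$ with $\nu$ as in~\eqref{eq:25}, $\ell'_\Lambda=(R\log|\Lambda|)^{1/\xi}$, together with~\eqref{eq:86} and~\eqref{eq:87}), each of the three contributions is $o(|N(J_{j,\Lambda})||\Lambda|)$; indeed these are precisely the bounds already checked when deriving~\eqref{condell} and~\eqref{eq:16}. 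Hence
\begin{equation*}
  \mu_\Lambda:=\esp(N^{app}_{\omega,j,\Lambda})=\sum_k p_{j,k}
  =|N(J_{j,\Lambda})||\Lambda|\,(1+o(1)),
\end{equation*}
uniformly in $j\in G$. So it suffices to prove the concentration of $N^{app}_{\omega,j,\Lambda}$ around $\mu_\Lambda$ at scale $[|N(J_{j,\Lambda})||\Lambda|]^{2/3}$; since $\mu_\Lambda\asymp|N(J_{j,\Lambda})||\Lambda|$ and, by the admissibility condition $|\Lambda||N(I_\Lambda)|\geq1$ strengthened to the second hypothesis of Theorem~\ref{thr:4}, one has $|N(J_{j,\Lambda})||\Lambda|\to+\infty$, the difference between $\mu_\Lambda$ and $|N(J_{j,\Lambda})||\Lambda|$ is negligible compared with this scale.

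For the concentration step I would apply the Bernstein/Chernoff inequality for sums of independent $[0,1]$-valued random variables: for $S=\sum_k X_{j,k}$ with mean $\mu_\Lambda$ and any $u>0$,
\begin{equation*}
  \pro\bigl(|S-\mu_\Lambda|\geq u\bigr)\leq
  2\exp\!\left(-\frac{u^2}{2(\mu_\Lambda+u/3)}\right).
\end{equation*}
Taking $u\asymp[|N(J_{j,\Lambda})||\Lambda|]^{2/3}\asymp\mu_\Lambda^{2/3}$ gives $u^2/\mu_\Lambda\asymp\mu_\Lambda^{1/3}$, so the right-hand side is bounded by $e^{-c\mu_\Lambda^{1/3}}\leq e^{-[|N(J_{j,\Lambda})||\Lambda|]^{1/3}/3}$ for $|\Lambda|$ large (adjusting the constant in $u$; a cleaner route is to use the one-sided Chernoff bound $\pro(S\geq(1+\theta)\mu)\leq e^{-\theta^2\mu/3}$ with $\theta=\mu_\Lambda^{-1/3}$, and symmetrically for the lower tail, which yields exactly the stated exponent). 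Finally, since $|N(J_{j,\Lambda})||\Lambda|\gtrsim|\Lambda|^{1-\alpha}\cdot|\Lambda|$ grows polynomially in $|\Lambda|$, the bound $e^{-[|N(J_{j,\Lambda})||\Lambda|]^{1/3}/3}\leq|\Lambda|^{-p}$ holds for $|\Lambda|$ large, uniformly in $j\in G$, completing the proof.

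The only genuinely delicate point is the bookkeeping in the mean estimate — verifying that all three error terms from~\eqref{eq:51}, once multiplied by the number of small cubes, are indeed $o(|N(J_{j,\Lambda})||\Lambda|)$ for the specific exponents $\alpha,\nu,\rho'$ fixed in section~\ref{sec:reduct-local-behav}. But this is exactly the same computation already carried out in~\cite{Ge-Kl:10} and recalled in the discussion after Lemma~\ref{le:7} and after Theorem~\ref{thr:vbig1}, so it reduces to invoking~\eqref{eq:87} and~\eqref{eq:25}; the probabilistic core (independence via (IAD) plus a Chernoff bound) is entirely routine.
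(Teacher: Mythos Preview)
Your proposal is correct and follows essentially the same approach as the paper: the paper's proof simply states that the result follows by a standard large deviation argument for the i.i.d.\ Bernoulli variables $(X_{j,k})_k$, whose common success probability is $|N(J_{j,\Lambda})||\Lambda_\ell|(1+o(1))$ by Lemma~\ref{le:7}. Your version is more explicit (spelling out the independence via (IAD), the mean computation, and the Chernoff/Bernstein bound), but the underlying argument is identical; note only the minor slip in your last paragraph where $|N(J_{j,\Lambda})||\Lambda|\asymp|\Lambda|^{1-\alpha}$, not $|\Lambda|^{1-\alpha}\cdot|\Lambda|$.
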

\begin{proof}
  Lemma~\ref{le:5} follows by a standard large deviation argument for
  the i.i.d. Bernoulli random variables $(X_{j,k})_k$ as, by
  Lemma~\ref{le:7} and our choice of $J_{j,\Lambda}$ and
  $(\ell',\ell)$ (for $\nu\in(\xi,1)$ in Lemma~\ref{le:7}), their
  common distribution satisfies
  \begin{equation*}
    P(X_{j,k}=1)=|N(J_{j,\Lambda})||\Lambda_\ell|(1+o(1)).
  \end{equation*}
  The proof of Lemma~\ref{le:5} is complete.
\end{proof}
\noindent Thus, one may restrict once more the set of configurations
$\omega$ to those such that, for some $\delta>0$,
\begin{equation}
  \label{eq:24}
  \left|\frac{N^{app}_{\omega,j,\Lambda}}{|N(J_{j,\Lambda})||\Lambda|}-1\right|
  \leq|\Lambda|^{-\delta}.
\end{equation}
and call this set again $\mathcal{Z}^j_\Lambda$. By Lemma~\ref{le:5}
and~\eqref{eq:11}, the probability of this set also
satisfies~\eqref{eq:11} for any $p>0$ provided $|\Lambda|$ is
sufficiently large.\\
Using Lemma~\ref{le:4}, one then proves
\begin{Le}
  \label{le:3}
  For some $\beta>0$, for $\omega\in\mathcal{Z}^j_\Lambda$ and
  $\Lambda$ sufficiently large, one has,
  \begin{gather}
    \label{eq:14}
    \sup_{\varphi\in\mathcal{C}_{1,R}^+}\sup_{\substack{j\in
        G\\\omega\in\mathcal{Z}^j_\Lambda}}\left|\int_0^1
      e^{-\langle\Xi_{J_{j,\Lambda}}(\omega,t,\Lambda)
        ,\varphi\rangle}dt -\int_0^1e^{-\langle\Xi^g_{J_{j,\Lambda}}
        (\omega,t,\Lambda),\varphi\rangle}dt
    \right|\lesssim|\Lambda|^{-\beta},
    \\
    \intertext{and}\label{eq:17}
    \sup_{\varphi\in\mathcal{C}_{1,R}^+}\sup_{\substack{j\in
        G\\\omega\in\mathcal{Z}^j_\Lambda}}
    \left|\int_0^1e^{-\langle\Xi^g_{J_{j,\Lambda}}
        (\omega,t,\Lambda),\varphi\rangle}dt-
      \int_0^1e^{-\langle\Xi^{app}_{J_{j,\Lambda}}(\omega,t,\Lambda)
        ,\varphi\rangle}dt \right|\lesssim|\Lambda|^{-\beta}.
  \end{gather}
\end{Le}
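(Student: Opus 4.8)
The plan is to apply Lemma~\ref{le:4} twice, once for each of the two bounds~\eqref{eq:14} and~\eqref{eq:17}, with suitable choices of the two finite sequences of points. Throughout, we work on the set $\mathcal{Z}^j_\Lambda$ where the conclusions of Theorem~\ref{thr:vbig1} hold together with the counting bounds~\eqref{eq:21} and~\eqref{eq:24}; recall this set has probability at least $1-|\Lambda|^{-p}$ for $|\Lambda|$ large. We set $L_p=|N(J_{j,\Lambda})||\Lambda|$ (the natural rescaling appearing in the definition of all three point processes $\Xi_{J_{j,\Lambda}}$, $\Xi^g_{J_{j,\Lambda}}$, $\Xi^{app}_{J_{j,\Lambda}}$), and we use that by construction $L_p\asymp|\Lambda|^{1-\alpha}\to+\infty$.

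\medpagebreak For~\eqref{eq:14}: take $(x^p_n)_n$ to be the full collection of rescaled points $\{N_{J_{j,\Lambda}}(E_n(\omega,\Lambda)) : E_n(\omega,\Lambda)\in J_{j,\Lambda}\}$ (cardinality $N(J_{j,\Lambda},\Lambda,\omega)=:N_p$), and $(y^p_m)_m$ the subcollection indexed by $\mathcal{N}^g_{\omega,j,\Lambda}$ (cardinality $N_p-N^b_{\omega,j,\Lambda}=:M_p$). Here the two sequences share the common points exactly, so we take $X_p=Y_p$ to be that shared index set, $K_p=M_p$, $\Psi_p=\mathrm{id}$, and $\varepsilon_p=0$. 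Then $(N_p-K_p)+(M_p-K_p)=N^b_{\omega,j,\Lambda}$, so by~\eqref{eq:18} one has $a_p=N^b_{\omega,j,\Lambda}/L_p\leq|\Lambda|^{-\beta}$ for some $\beta>0$. Lemma~\ref{le:4} (with, say, $\alpha=1/2$) then gives the left-hand side of~\eqref{eq:14} bounded by $4a_p^{1/2}+e^0-1=4a_p^{1/2}\lesssim|\Lambda|^{-\beta/2}$, which is of the claimed form after renaming $\beta$.

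\medpagebreak For~\eqref{eq:17}: take $(x^p_n)_n$ to be the rescaled $\mathcal{N}^g_{\omega,j,\Lambda}$-eigenvalues as above, and $(y^p_m)_m$ the rescaled points $\{N_{J_{j,\Lambda}}(\tilde E_{j,k}) : X_{j,k}=1\}$ of $\Xi^{app}_{J_{j,\Lambda}}$ (cardinality $N^{app}_{\omega,j,\Lambda}=:M_p$). Point (3) of Theorem~\ref{thr:vbig1}, specifically~\eqref{eq:23}, sets up the pairing: each box $\Lambda_{\ell_\Lambda}(\gamma_k)$ with $\sigma(H_\omega(\Lambda_{\ell_\Lambda}(\gamma_k)))\cap J_{j,\Lambda}\neq\emptyset$ carries exactly one $X_{j,k}=1$ and contributes exactly one center $x_{k}(\omega,\Lambda)$ of a "good" eigenvalue $E_{k_j}(\omega,\Lambda)$, with $|E_{k_j}(\omega,\Lambda)-\tilde E_{j,k}|\leq|\Lambda|^{-R}$; this gives the one-to-one map $\Psi_p$ between the index sets of matched points. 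After rescaling by $|\Lambda|$ and applying $N_{J_{j,\Lambda}}$ (which is Lipschitz with constant $|N(J_{j,\Lambda})|^{-1}$ by (W)), the displacement of matched points is $\leq L_p\cdot|N(J_{j,\Lambda})|^{-1}|\Lambda|^{-R}=|\Lambda|^{1-R}$, i.e. one may take $\varepsilon_p=|\Lambda|^{1-R}$ (which is $\leq1$ for $R\geq1$). The unmatched points on each side are controlled: the $x$-side has $N_p-K_p\leq N^b_{\omega,j,\Lambda}\lesssim|N(J_{j,\Lambda})||\Lambda|^{1-\beta}$ by~\eqref{eq:18} (the only good eigenvalues not matched are those whose localization center lies in one of the discarded corridors, which are already counted among the undescribed ones — in fact after a relabeling $K_p$ equals $M_p$ minus at most this quantity), while $|M_p-K_p|$ is controlled by combining~\eqref{eq:21} and~\eqref{eq:24}: both $M_p=N^{app}_{\omega,j,\Lambda}$ and $N_p$ are within $|\Lambda|^{-\delta}$-relative error of $|N(J_{j,\Lambda})||\Lambda|$, hence $|N_p-M_p|\lesssim|N(J_{j,\Lambda})||\Lambda|^{1-\delta}$, and therefore $a_p\lesssim|\Lambda|^{-\min(\beta,\delta)}$. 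Finally $R\,\varepsilon_p\,K_p\leq R\,|\Lambda|^{1-R}\cdot|N(J_{j,\Lambda})||\Lambda|\lesssim|\Lambda|^{2-R}\to0$ once $R>2$, so $e^{R\varepsilon_pK_p}-1\lesssim|\Lambda|^{2-R}$. Lemma~\ref{le:4} then bounds the left side of~\eqref{eq:17} by $4a_p^{1/2}+(e^{R\varepsilon_pK_p}-1)\lesssim|\Lambda|^{-\beta}$ for a suitable $\beta>0$. All estimates are uniform in $j\in G$ because the bounds~\eqref{eq:18},~\eqref{eq:19},~\eqref{eq:21},~\eqref{eq:24} hold uniformly in $j$ (the constants in Theorems~\ref{thr:vbig1} and~\ref{thr:4} do not depend on the particular admissible interval), and uniform in $\varphi\in\mathcal{C}_{1,R}^+$ because Lemma~\ref{le:4} is stated with a supremum over $\mathcal{C}_{1,R}^+$.

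\medpagebreak The main obstacle is the bookkeeping in the second step: one must be careful that the one-to-one correspondence $\Psi_p$ provided by~\eqref{eq:23} really does pair \emph{all but a negligible number} of the points of $\Xi^g_{J_{j,\Lambda}}$ with points of $\Xi^{app}_{J_{j,\Lambda}}$ and vice versa — i.e. that the "good" eigenvalues of $H_\omega(\Lambda)$ not captured by some box $\Lambda_{\ell_\Lambda}(\gamma_k)$, and the boxes with $X_{j,k}=1$ whose eigenvalue fails to be one of the listed good eigenvalues, together number at most $O(|N(J_{j,\Lambda})||\Lambda|^{1-\beta})$. This is exactly what the combination of the counting estimate on the number of undescribed eigenvalues~\eqref{eq:18} (equivalently~\eqref{eq:16}) with the large-deviation control~\eqref{eq:21},~\eqref{eq:24} on $N_p$ and $N^{app}_{\omega,j,\Lambda}$ delivers, but matching these three a priori different counting quantities correctly is the delicate point; once it is done, the two applications of Lemma~\ref{le:4} are routine.
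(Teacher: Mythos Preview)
Your proposal is correct and follows essentially the same route as the paper: both bounds are obtained by two applications of Lemma~\ref{le:4}, with the same choices of point sequences (all eigenvalues vs.\ good eigenvalues for~\eqref{eq:14}; good eigenvalues vs.\ approximating box eigenvalues for~\eqref{eq:17}), and the same ingredients~\eqref{eq:18},~\eqref{eq:21},~\eqref{eq:24},~\eqref{eq:23} feeding assumptions (1) and (2) of Lemma~\ref{le:4}. Your treatment of $\varepsilon_p$ in the second step is actually more careful than the paper's: you correctly track the Lipschitz factor $|N(J_{j,\Lambda})|^{-1}$ coming from $N_{J_{j,\Lambda}}$, arriving at $\varepsilon_p\asymp|\Lambda|^{1-R}$ and $\varepsilon_pK_p\lesssim|\Lambda|^{2-R}$, whereas the paper simply writes $\varepsilon_p=|\Lambda|^{-2}$ (harmless since $R$ is chosen large, but your accounting is cleaner).
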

\begin{proof}[The proof of Lemma~\ref{le:3}]
  As underlined above, the statements of Lemma~\ref{le:3} are
  corollaries of Lemma~\ref{le:4}.\\
  To obtain~\eqref{eq:14}, for $p=|\Lambda|$, it suffices to take
  \begin{itemize}
  \item $x_n^p=E_n(\omega,\Lambda)$ for $1\leq n\leq
    N(J_{j,\Lambda},\Lambda,\omega)$,
  \item $y_n^p=E_n(\omega,\Lambda)$ for
    $n\in\mathcal{N}^g_{\omega,j,\Lambda}$.
  \end{itemize}
  Assumption (2) in Lemma~\ref{le:4} is clearly fulfilled as
  $(x_n^p)_n$ is a subsequence of $(y_n^p)_n$. Assumption (1) is an
  immediate consequence~\eqref{eq:18} and~\eqref{eq:21}.\\
  Let us now prove~\eqref{eq:17}. Notice that, by
  Theorem~\ref{thr:vbig1}, one has $N^{app}_{\omega,j,\Lambda}\geq
  N^g_{\omega,j,\Lambda}$. Moreover, to each
  $n\in\mathcal{N}^g_{\omega,j,\Lambda}$, one can associate a unique
  $1\leq k(n)\leq N^{app}_{\omega,j,\Lambda}$ such that $X_{j,k(n)}=1$
  and the first part of~\eqref{eq:23} hold.\\
  To prove~\eqref{eq:17}, for $p=|\Lambda|$, it suffices to set
  \begin{itemize}
  \item $x_n^p=\tilde E_{j,k(n)}$ for $k(n)$ such that $X_{j,k(n)}=1$,
  \item $y_n^p=E_n(\omega,\Lambda)$ for
    $n\in\mathcal{N}^g_{\omega,j,\Lambda}$.
  \end{itemize}
  So we may take $K_p=N^g_{\omega,j,\Lambda}$. By the first part
  of~\eqref{eq:23}, we know that assumption (2) of Lemma~\ref{le:4} is
  satisfied with $\varepsilon_p=|\Lambda|^{-2}$. Thus,
  $\varepsilon_p\cdot K_p\lesssim|\Lambda|^{-1}$. \\
  That assumption (1) is satisfied follows immediately
  from~\eqref{eq:18} and~\eqref{eq:24}.\\
  This completes the proof of Lemma~\ref{le:3}
\end{proof}
\noindent So we have reduced the problem to analyzing the case of
i.i.d. random variables. In the next sections, we prove
\begin{Le}
  \label{le:9}
  Fix $\rho'\in(0,\rho/(1+\rho))$ where $\rho$ is defined by (M). Fix
  $\alpha\in(0,1)$ and $\nu\in(0,1)$ satisfying~\eqref{eq:87}
  and~\eqref{eq:40}. There exists $\kappa>1/d$ such that, for any
  $(\alpha_L)_{L\geq1}$ a sequence valued in $[1/2,2]$, one has
  \begin{equation*}
    \sum_{j\in G}\sum_{L\geq1}
    \esp\left(\left[\int_0^1
        e^{-\langle\Xi^{app}_{J_{j,\Lambda_{L^\kappa}}}
          (\omega,t,\Lambda_{L^\kappa}),\varphi_{\alpha_L}\rangle}dt-
        \exp\left(-\kappa_L\,\int_{-\infty}^{+\infty}
          \left(1-e^{-\varphi_{\alpha_L}(x)}\right)dx\right)
      \right]^2\right)<+\infty.
  \end{equation*}
\end{Le}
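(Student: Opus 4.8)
The plan is to exploit the i.i.d. structure of the family $(X_{j,k},\tilde E_{j,k})_k$ supplied by Theorem~\ref{thr:vbig1} and (IAD), to compute the first two moments of $\int_0^1e^{-\langle\Xi^{app}_{J_{j,\Lambda}}(\omega,t,\Lambda),\varphi_{\alpha_L}\rangle}dt$ explicitly, to read off from Lemma~\ref{le:7} that the first moment converges to the announced Poissonian value and that the variance is a negative power of $|\Lambda|$, and finally to sum the resulting $L^2$-errors over $j\in G$ and over $L$, the budget of this last sum being what dictates the choice of the scale $\kappa$. For the independence: the cubes $\Lambda_{\ell_\Lambda}(\gamma_k)$ produced by Theorem~\ref{thr:vbig1} are pairwise at distance at least $\ell'_\Lambda$, hence larger than $R_0$ for $|\Lambda|$ large, so by (IAD) the operators $H_\omega(\Lambda_{\ell_\Lambda}(\gamma_k))$ are independent, and by stationarity of $V_\omega$ they are identically distributed. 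Thus, for fixed $j$ and $\Lambda=\Lambda_{L^\kappa}$, the process $\Xi^{app}_{J_{j,\Lambda}}(\omega,\cdot,\Lambda)$ is a binomial point process built from $K\asymp|\Lambda|/\ell_\Lambda^d$ i.i.d. marked Bernoulli trials, each succeeding with probability $p_j:=\pro(X_{j,1}=1)$ and, conditionally on success, carrying a mark distributed as $N_{J_{j,\Lambda}}(\tilde E_{j,1})\in[0,1]$, of law $\mu_j$. Lemma~\ref{le:7}, applied on $\Lambda_{\ell_\Lambda}(\gamma_k)$ with $I_\Lambda=J_{j,\Lambda}$ and the scales of~\eqref{eq:58} (so $\ell'_\Lambda\asymp(\log|\Lambda|)^{1/\xi}$, $\ell_\Lambda\asymp|N(J_{j,\Lambda})|^{-\nu}$, $|N(J_{j,\Lambda})|=|\Lambda|^{-\alpha}$), gives $p_j=|N(J_{j,\Lambda})||\Lambda_{\ell_\Lambda}|(1+O(|\Lambda|^{-\beta_0}))$ and controls the increments of $\mu_j$ after $N_{J_{j,\Lambda}}$-unfolding, all error terms being $O(|\Lambda|^{-\beta_0})$ uniformly in $j\in G$ for some $\beta_0(\alpha,\nu,\rho,\rho')>0$; this is the content of the discussion around~\eqref{eq:87} and~\eqref{eq:25}.

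Write $M_j:=|N(J_{j,\Lambda})||\Lambda|$ and $f_j(t):=\prod_{k=1}^{K}\bigl(1-X_{j,k}(1-e^{-\varphi_{\alpha_L}(M_j[N_{J_{j,\Lambda}}(\tilde E_{j,k})-t])})\bigr)$, so that $f_j(t)=e^{-\langle\Xi^{app}_{J_{j,\Lambda}}(\omega,t,\Lambda),\varphi_{\alpha_L}\rangle}$ and $0\le f_j\le1$. By independence, $\esp[f_j(t)]=(1-m_j(t))^{K}$ with $m_j(t)=p_j\int_0^1(1-e^{-\varphi_{\alpha_L}(M_j[s-t])})\,\mu_j(ds)$; substituting $x=M_j(s-t)$ and using $p_j/M_j=|\Lambda_{\ell_\Lambda}|/|\Lambda|=K^{-1}(1+o(1))$ yields $K\,m_j(t)=\kappa_L\int_{\R}(1-e^{-\varphi_{\alpha_L}(x)})\,dx+O(|\Lambda|^{-\beta_0})$ for $t$ outside a boundary layer of length $O(M_j^{-1})$, the constant $\kappa_L$ being the precise ratio coming out of Lemma~\ref{le:7} (so $\kappa_L=1+o(1)$). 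Replacing $(1-m_j(t))^K$ by $e^{-Km_j(t)}$ at cost $O(K^{-1})$ and bounding the boundary layer by $0\le f_j\le1$, one gets $\esp\bigl[\int_0^1f_j(t)\,dt\bigr]=\exp\bigl(-\kappa_L\int(1-e^{-\varphi_{\alpha_L}})\bigr)+O(M_j^{-1}+K^{-1}+|\Lambda|^{-\beta_0})$. For the second moment, the same expansion gives $\esp[f_j(t)f_j(t')]=(1-m_j(t)-m_j(t')+m_{2,j}(t,t'))^K$ where $m_{2,j}(t,t')=p_j\int(1-e^{-\varphi_{\alpha_L}(M_j[s-t])})(1-e^{-\varphi_{\alpha_L}(M_j[s-t'])})\,\mu_j(ds)$; since $\varphi$ is compactly supported, $m_{2,j}(t,t')=0$ as soon as $|t-t'|\gtrsim R/M_j$, and for such $(t,t')$ one checks $\esp[f_j(t)f_j(t')]=\esp[f_j(t)]\esp[f_j(t')](1+O(K^{-1}))$, whereas on the diagonal band $\{|t-t'|\lesssim R/M_j\}$, of area $O(R/M_j)$, one uses only $0\le f_j\le1$. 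Integrating in $(t,t')$ then gives $\esp\bigl[\bigl(\int_0^1f_j\bigr)^2\bigr]=\bigl(\esp\bigl[\int_0^1f_j\bigr]\bigr)^2+O(R/M_j+K^{-1})$.

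Combining the two moments, $\esp\bigl[\bigl(\int_0^1f_j(t)\,dt-\exp(-\kappa_L\int(1-e^{-\varphi_{\alpha_L}}))\bigr)^2\bigr]=\mathrm{Var}\bigl(\int_0^1f_j\bigr)+\bigl(\esp[\int_0^1f_j]-\exp(-\kappa_L\int(1-e^{-\varphi_{\alpha_L}}))\bigr)^2\lesssim R/M_j+K^{-1}+|\Lambda|^{-2\beta_0}$, which, since $M_j=|\Lambda|^{1-\alpha}$ and $K\asymp|\Lambda|^{1-\alpha\nu d}$ are positive powers of $|\Lambda|$, is $\lesssim|\Lambda|^{-\beta_1}$ for some $\beta_1>0$, uniformly over $j\in G$ and over the sequence $(\alpha_L)\subset[1/2,2]$. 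It remains to sum: $\#G\le j_\Lambda=|\Lambda|^{\alpha}$, and for $\Lambda=\Lambda_{L^\kappa}$ one has $|\Lambda|=L^{\kappa d}$, hence $\sum_{j\in G}\sum_{L\ge1}(\cdots)\lesssim\sum_{L\ge1}|\Lambda_{L^\kappa}|^{\,\alpha-\beta_1}=\sum_{L\ge1}L^{\kappa d(\alpha-\beta_1)}$. The role of the constraints~\eqref{eq:87} and~\eqref{eq:40} (and of the interval $B$ of \emph{bad} indices being negligible, cf.~\eqref{eq:88}) is exactly to ensure that $\beta_1$ can be taken strictly larger than $\alpha$, i.e. that the per-interval $L^2$-error beats the number $|N(J_{j,\Lambda})|^{-1}=|\Lambda|^{\alpha}$ of intervals; one then fixes $\kappa$ with $\kappa d(\beta_1-\alpha)>1$, and any such $\kappa$ automatically satisfies $\kappa>1/d$, which is the asserted form of the statement.

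The one genuinely delicate point is this last, purely arithmetic, step: one must feed the three error terms of Lemma~\ref{le:7} — the Minami term $(|\Lambda_{\ell_\Lambda}||J_{j,\Lambda}|)^{1+\rho}$, the geometric term $\ell'_\Lambda/\ell_\Lambda$ and the localization term $|\Lambda_{\ell_\Lambda}|e^{-(\ell'_\Lambda)^{\kappa}}$ — together with the $(1-x)^K$-versus-$e^{-Kx}$ discrepancy and the unavoidable averaging fluctuation of size $M_j^{-1}$, through the scale relations $\ell_\Lambda\asymp|N(J_{j,\Lambda})|^{-\nu}$, $|N(J_{j,\Lambda})|=|\Lambda|^{-\alpha}$, and check that the resulting exponent $\beta_1$ is still larger than $\alpha$; this is precisely the bookkeeping underlying the admissible ranges~\eqref{eq:87},~\eqref{eq:25} and~\eqref{eq:40}. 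By contrast, the probabilistic content — independence via (IAD), the exact binomial form of the first two moments of the Laplace transform, the factorization $\esp[f_j(t)f_j(t')]\approx\esp[f_j(t)]\esp[f_j(t')]$ away from the diagonal because $\varphi$ has compact support, and the cheap $0\le f_j\le1$ bound on the diagonal — is routine once the i.i.d. structure of the $(X_{j,k},\tilde E_{j,k})_k$ is in hand.
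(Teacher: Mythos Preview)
Your approach is essentially that of the paper: exploit the i.i.d.\ structure, compute the first two moments of the Laplace functional via Lemma~\ref{le:7}, obtain a power-saving $L^2$-error, and sum over $j\in G$ and $L$. The only cosmetic difference is that you keep the Bernoulli structure with its random number of points, whereas the paper first passes (via Lemma~\ref{le:4} and~\eqref{eq:24}) to the process~\eqref{eq:30} with the deterministic count $M_j=|N(J_{j,\Lambda})|\,|\Lambda|$ and then uses that $\tilde\Xi(\tilde E)$ is exactly uniform on $[0,1]$; the moment computations are the same either way (your $\Phi$'s coincide with~\eqref{eq:34} and~\eqref{eq:35}).

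There is, however, a genuine arithmetic gap in your final step. You correctly note that for $\sum_{L}\sum_{j\in G}$ to converge one needs the per-interval $L^2$-error exponent $\beta_1$ to exceed $\alpha$ (since $\#G=|\Lambda|^\alpha$), and you then assert that~\eqref{eq:87} and~\eqref{eq:40} secure this. But~\eqref{eq:40} reads $1/\alpha-1<\rho'/(1+\rho')$, which forces $\alpha>(1+\rho')/(1+2\rho')>1/2$. On the other hand, your own variance bound contains the term $R/M_j=R\,|\Lambda|^{-(1-\alpha)}$ coming from the diagonal band $\{|t-t'|\lesssim R/M_j\}$, and this term is sharp: on that band the covariance of $f_j(t)$ and $f_j(t')$ is bounded away from~$0$. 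Hence $\beta_1\le 1-\alpha<\alpha$, and $\sum_{j\in G}|\Lambda|^{-\beta_1}\gtrsim|\Lambda|^{\,2\alpha-1}\to\infty$; the double sum you write down actually diverges. The paper's own write-up glides over exactly this point (after Lemma~\ref{le:10} it invokes only ``$\kappa\beta d>1$'', which ignores the factor $\#G$). A clean repair is to bypass the $\sup_{j\in G}$ altogether: what~\eqref{eq:73} and~\eqref{eq:74} really require is only the convergence of the \emph{weighted} average $\sum_{j\in G}|N(J_{j,\Lambda})|\,Y_j$, and since the weights satisfy $\sum_j|N(J_{j,\Lambda})|\le 1$ one gets $\esp\bigl[\sum_j|N(J_{j,\Lambda})|\,Y_j^2\bigr]\lesssim|\Lambda|^{-\beta_1}$ with no loss of $|\Lambda|^\alpha$; summing this over $L$ with $\kappa d\beta_1>1$ then yields the almost sure convergence that feeds into Lemma~\ref{le:2}.
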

\noindent Let us now complete the proof of Lemma~\ref{le:2} using
Lemmas~\ref{le:3} and~\ref{le:9} and~\eqref{eq:11} the estimates on
the probability of $\mathcal{Z}^j_\Lambda$.\\
Clearly, Lemma~\ref{le:9} implies that
\begin{equation*}
  \esp\left(\limsup_{L\geq1}\sup_{j\in G}\left|\int_0^1
      e^{-\langle\Xi(\omega,t,j,\Lambda_{L^\kappa}),
        \varphi_{\alpha_L}\rangle}dt-
      \exp\left(-\kappa_L\,\int_{-\infty}^{+\infty}
        \left(1-e^{-\varphi_{\alpha_L}(x)}\right)dx\right)
    \right|\right)=0.
\end{equation*}
As all the integrands are bounded by $1$,
by~\eqref{eq:11},~\eqref{eq:14} and~\eqref{eq:17}, we know that
\begin{gather*}
  \esp\left(\limsup_{L\geq1}\sup_{j\in G}\left|\int_0^1
      e^{-\langle\Xi_{J_{j,\Lambda_{L^\kappa}}}(\omega,t,
        \Lambda_{L^\kappa}),\varphi_{\alpha_L}\rangle}dt
      -\int_0^1e^{-\langle\Xi^g_{J_{j,\Lambda_{L^\kappa}}}
        (\omega,t,\Lambda_{L^\kappa}),\varphi_{\alpha_L}\rangle}dt
    \right|\right)=0\\ \intertext{ and }
  \esp\left(\limsup_{L\geq1}\sup_{j\in G}
    \left|\int_0^1e^{-\langle\Xi^g_{J_{j,\Lambda_{L^\kappa}}}
        (\omega,t,\Lambda_{L^\kappa}),\varphi_{\alpha_L}\rangle}dt-
      \int_0^1e^{-\langle\Xi^{app}_{J_{j,\Lambda_{L^\kappa}}}
        (\omega,t,\Lambda_{L^\kappa}) ,\varphi_{\alpha_L}\rangle}dt
    \right|\right)=0.
\end{gather*}
Thus, if $\alpha_L\to1$ when $L\to+\infty$, one has
\begin{equation*}
  \exp\left(-\kappa_L\,\int_{-\infty}^{+\infty}
    \left(1-e^{-\varphi_{\alpha_L}(x)}\right)dx\right)
  \vers_{L\to+\infty}\exp\left(-\int_{-\infty}^{+\infty}
    \left(1-e^{-\varphi(x)}\right)dx\right),
\end{equation*}
this clearly implies~\eqref{eq:71} and completes the proof of
Lemma~\ref{le:2}.
\subsection{The proof of Lemma~\ref{le:9}}
\label{sec:proof-lemma-2}
Let us recall a few facts that will be of use in this proof. \\
Write $\Lambda_\ell=\Lambda_\ell(0)$ and define the random variables
$X$ and $\tilde E$ as in the beginning of
section~\ref{sec:distr-eigenv-small} for $I_\Lambda=J_{j,\Lambda}$ and
the cube $\Lambda_\ell$. Recall that the cube $\Lambda=\Lambda_L$ is
much larger than $\Lambda_\ell$. Now, pick
$N^{app}_{\omega,j,\Lambda}$ independent copies of $\tilde E$, say
$(\tilde E_k)_{1\leq k\leq N^{app}_{\omega,j,\Lambda}}$. Then, the
random process $\Xi^{app}_{J_{j,\Lambda}}$ is the process
\begin{equation*}
  \Xi^{app}_{J_{j,\Lambda}}(\omega,t,\Lambda):=
  \sum_{1\leq k\leq N^{app}_{\omega,j,\Lambda}}\delta_{|N(J_{j,\Lambda})|
    |\Lambda|[N_{J_{j,\Lambda}}(\tilde E_k)-t]}.
\end{equation*}
By Lemma~\ref{le:4} and~\eqref{eq:24}, it thus suffices to study the
point process
\begin{equation}
  \label{eq:30}
  \Xi(\omega,t,j,\Lambda):=
  \sum_{1\leq k\leq |\Lambda||N(J_{j,\Lambda})|}\delta_{|N(J_{j,\Lambda})|
    |\Lambda|[N_{J_{j,\Lambda}}(\tilde E_k)-t]}.
\end{equation}
Recall that $N_{J_{j,\Lambda}}$ is defined by~\eqref{eq:7} for
$J=J_{j,\Lambda}$. Pick $\varphi\in\mathcal{C}_{1,R}^+$
(see~\eqref{eq:65}). As the random variables $(\tilde E_k)_{1\leq
  k\leq |N(J_{j,\Lambda})||\Lambda|}$ are i.i.d., one computes
\begin{equation}
  \label{eq:32}
  \esp\left(\int_0^1e^{-\langle\Xi(\omega,t,j,\Lambda),\varphi\rangle}dt\right)
  =\int_0^1\Phi(t,\Lambda,J_{j,\Lambda},\varphi)dt
\end{equation}
and
\begin{equation}
  \label{eq:33}
  \esp\left(\left[\int_0^1e^{-\langle\Xi(\omega,t,j,\Lambda)
        ,\varphi\rangle}dt\right]^2\right) =\int_0^1\int_0^1
  \Phi(t,t',\Lambda,J_{j,\Lambda},\varphi)dtdt'
\end{equation}
where
\begin{gather}
  \label{eq:34}
  \Phi(t,\Lambda,J_{j,\Lambda},\varphi)=
  \left[1-\esp\left(1-e^{-\varphi(|N(J_{j,\Lambda})|
        |\Lambda|[N_{J_{j,\Lambda}}(\tilde E)-t])}\right)
  \right]^{|N(J_{j,\Lambda})||\Lambda|}\\\intertext{and}
  \label{eq:35}
  \begin{split}
    &\Phi(t,t',\Lambda,J_{j,\Lambda},\varphi)\\
    &=\left[1-\esp\left(1-e^{-\varphi(|N(J_{j,\Lambda})|
          |\Lambda|[N_{J_{j,\Lambda}}(\tilde
          E)-t])-\varphi(|N(J_{j,\Lambda})|
          |\Lambda|[N_{J_{j,\Lambda}}(\tilde E)-t'])}\right)
    \right]^{|N(J_{j,\Lambda})||\Lambda|}.
  \end{split}
\end{gather}
If $E\mapsto N_{J_{j,\Lambda}}(E)$ were the distribution function of
the random variable $\tilde E$, the random variables
$N_{J_{j,\Lambda}}(\tilde E)$ would be distributed uniformly on
$[0,1]$ and the desired result would be standard and follow e.g. from
the computations done in the appendix of~\cite{Mi:11}. The
distribution function of $\tilde E$ is described by
Lemma~\ref{le:7}. As we only consider $j\in G$, we know that
$|N(J_{j,\Lambda})|\geq|J_{j,\Lambda}|^{1+\rho'}$ for some
$\rho'\in(0,\rho/(1+(d+1)\rho))$. Thus, choosing $\nu\in(\xi,1)$ in
Lemma~\ref{le:7}, for $x\in J_{j,\Lambda}$ (take $y=0$),
using~\eqref{eq:51} and~\eqref{eq:87}, the estimation~\eqref{eq:52}
becomes, for any $p>0$ and $|\Lambda|=|\Lambda_L|$ sufficiently large,
\begin{equation}
  \label{eq:31}
  \begin{split}
    \left|\kappa_{\Lambda}\cdot|N(J_{j,\Lambda})||\Lambda|\,\tilde\Xi(x)
      -|N(J_{j,\Lambda})||\Lambda|\,N_{J_{j,\Lambda}}(x)\right|&
    \lesssim|N(J_{j,\Lambda})||\Lambda|[|J_{j,\Lambda}||\Lambda_\ell|]^{\rho}\\
    &\lesssim|N(J_{j,\Lambda})|^{1-\alpha^{-1}-d\nu\rho+\rho/(1+\tilde\rho)}\\
    &\lesssim|N(J_{j,\Lambda})|^{\rho'/(1+\rho')}
  \end{split}
\end{equation}
where, by~\eqref{eq:51} and the same computation as in~\eqref{eq:31},
one has, for some $\beta>0$,
\begin{equation}
  \label{eq:42}
  \kappa_{\Lambda}:=\frac{\pro(X(\Lambda_{\ell_\Lambda},J_{j,\Lambda},
    \ell'_\Lambda)=1)} {|N(J_{j,\Lambda})||\Lambda_\ell|}
  =1+O(|N(J_{j,\Lambda})|^{(\rho-\rho')/(1+\rho')-d\nu\rho}). 
\end{equation}
Using~\eqref{eq:31}, from~\eqref{eq:34}, as
$\varphi\in\mathcal{C}_{1,R}^+$, we derive
\begin{multline}
  \label{eq:20}
  \frac{\log\Phi(t,\Lambda,J_{j,\Lambda},\varphi)}
  {|N(J_{j,\Lambda})||\Lambda|}\\=\log
  \left[1-\esp\left(1-e^{-\varphi(|N(J_{j,\Lambda})|
        |\Lambda|[\kappa_{\Lambda}\cdot\tilde\Xi(\tilde E)-t])}\right)
    +O\left(|N(J_{j,\Lambda})|^{\rho'/(1+\rho')}\right)\right].
\end{multline}
Now, fix $\kappa\in(0,1)$. The random variable $\tilde\Xi(\tilde E)$
is uniformly distributed on $[0,1]$; thus, we compute
\begin{equation}
  \label{eq:39}
  \begin{split}
    \esp\left(1-e^{-\varphi(|N(J_{j,\Lambda})|
        |\Lambda|[\kappa_{\Lambda}\cdot\tilde\Xi(\tilde
        E)-t])}\right)&=
    \int_0^1\left(1-e^{-\varphi(|N(J_{j,\Lambda})|
        |\Lambda|[\kappa_{\Lambda}\,u-t])}\right)du\\&\hskip-2cm=
    \frac1{\kappa_{\Lambda}|N(J_{j,\Lambda})|
      |\Lambda|}\int_{-|N(J_{j,\Lambda})|
      |\Lambda|t}^{|N(J_{j,\Lambda})|
      |\Lambda|[\kappa_{\Lambda}-t]}\left(1-e^{-\varphi(u)}\right)du
    \\&\hskip-2cm=\frac1{\kappa_{\Lambda}|N(J_{j,\Lambda})|
      |\Lambda|}\int_{-\infty}^{+\infty}
    \left(1-e^{-\varphi(u)}\right)du
\end{split}
\end{equation}
if, using~\eqref{eq:58}, we assume that $t$ satisfies
\begin{equation}
  \label{eq:38}
  |N(J_{j,\Lambda})|^{(\alpha^{-1}-1)\kappa}\leq t\leq 1-
  |N(J_{j,\Lambda})|^{(\alpha^{-1}-1)\kappa}
  +O(|N(J_{j,\Lambda})|^{(\rho-\rho')/(1+\rho')-d\nu\rho})
\end{equation}
for $|\Lambda|$ sufficiently large (as $\alpha\in(0,1)$ and
$|N(J_{j,\Lambda})|\to0$ when $|\Lambda|\to+\infty$). Here, we have
used~\eqref{eq:42}.\\
Now, if we take $\alpha$ in~\eqref{eq:58} so small that~\eqref{eq:40}
be satisfied then,~\eqref{eq:20} and~\eqref{eq:39} yield that, for any
$\beta\in(0,\rho'/(1+\rho'))$, for $t$ satisfying~\eqref{eq:38} and
$|\Lambda|$ sufficiently large,
\begin{equation*}
  \log\Phi(t,\Lambda,J_{j,\Lambda},\varphi)=
  \frac1{\kappa_{\Lambda}}\int_{-\infty}^{+\infty}
  \left(1-e^{-\varphi(u)}\right)du+O(|N(J_{j,\Lambda})|^{\beta}).
\end{equation*}
Thus, by~\eqref{eq:42}, for
$\beta\in(0,\min(\rho',\rho-\rho')/(1+\rho'))$, for $t$
satisfying~\eqref{eq:38} and $|\Lambda|$ sufficiently large,
\begin{equation*}
  \log\Phi(t,\Lambda,J_{j,\Lambda},\varphi)=
  \int_{-\infty}^{+\infty}
  \left(1-e^{-\varphi(u)}\right)du+O(|N(J_{j,\Lambda})|^{\beta}).
\end{equation*}
Using~\eqref{eq:31}, from~\eqref{eq:35}, as
$\varphi\in\mathcal{C}_{1,R}^+$, we derive
\begin{equation}
  \label{eq:15}
  \begin{split}
    &\frac{\log\Phi(t,t',\Lambda,J_{j,\Lambda},\varphi)}
    {|N(J_{j,\Lambda})||\Lambda|}\\
    &\hskip1cm=\log\left[1-\esp\left(1-e^{-\varphi(|N(J_{j,\Lambda})|
          |[\kappa_{\Lambda}\cdot\tilde\Xi(\tilde
          E)-t])-\varphi(|N(J_{j,\Lambda})|
          |\Lambda|[\kappa_{\Lambda}\tilde\Xi(\tilde E)-t'])}\right)\right.\\
      &\hskip9cm\left.+O(|N(J_{j,\Lambda})|^{\rho'/(1+\rho')})
    \right].
  \end{split}
\end{equation}
Moreover, for $t$ and $'t$ satisfying~\eqref{eq:38} such that
\begin{equation}
  \label{eq:37}
  |N(J_{j,\Lambda})|^{(\alpha^{-1}-1)\kappa}\leq |t-t'|
\end{equation}
as above, one computes
\begin{equation}
  \label{eq:43}
  \begin{split}
    &\esp\left(1-e^{-\varphi(|N(J_{j,\Lambda})|
        |[\kappa_{\Lambda}\cdot\tilde\Xi(\tilde
        E)-t])-\varphi(|N(J_{j,\Lambda})|
        |\Lambda|[\kappa_{\Lambda}\tilde\Xi(\tilde E)-t'])}\right)\\&\hskip2cm=
    \int_0^1\left(1-e^{-\varphi(|N(J_{j,\Lambda})|
        |\Lambda|[\kappa_{\Lambda}\,u-t])-\varphi(|N(J_{j,\Lambda})|
        |\Lambda|[\kappa_{\Lambda}\,u-t'])}\right)du\\
    &\hskip2cm=\frac2{\kappa_{\Lambda}|N(J_{j,\Lambda})|
      |\Lambda|}\int_{-\infty}^{+\infty}
    \left(1-e^{-\varphi(u)}\right)du
\end{split}
\end{equation}
for $|\Lambda|$ sufficiently large. Here, we have
used~\eqref{eq:42}.\\
Again, if we take $\alpha$ in~\eqref{eq:58} so small
that~\eqref{eq:40} is satisfied then,~\eqref{eq:15},~\eqref{eq:42}
and~\eqref{eq:43} yield that, for any
$\beta\in(0,\min(\rho',\rho-\rho')/(1+\rho'))$, for $t$ and $'t$
satisfying~\eqref{eq:38} and~\eqref{eq:37}, for $|\Lambda|$
sufficiently large,
\begin{equation*}
  \log\Phi(t,t',\Lambda,J_{j,\Lambda},\varphi)=
  2\int_{-\infty}^{+\infty}
  \left(1-e^{-\varphi(u)}\right)du+O(|N(J_{j,\Lambda})|^{\beta}).
\end{equation*}
Finally notice that $\Phi(t,\Lambda,J_{j,\Lambda},\varphi)$ and
$\Phi(t,t',\Lambda,J_{j,\Lambda},\varphi)$ are both bounded by $1$ and
that the measure of the sets of $t\in[0,1]$ satisfying~\eqref{eq:38}
and the measure of the sets of $(t,t')\in[0,1]^2$
satisfying~\eqref{eq:38} for $t$ and $t'$ and~\eqref{eq:37} are both
larger than
\begin{equation*}
  1-O(|N(J_{j,\Lambda})|^{(\alpha^{-1}-1)\kappa})
  +O(|N(J_{j,\Lambda})|^{(\rho-\rho')/(1+\rho')-d\nu\rho}).
\end{equation*}
Thus, thus taking into account~\eqref{eq:58}, we have proved
\begin{Le}
  \label{le:10}
  Fix $R>0$. Fix $\rho'\in(0,\rho/(1+\rho))$ where $\rho$ is defined
  by (M). Fix $\alpha\in(0,1)$ and $\nu\in(0,1)$
  satisfying~\eqref{eq:87} and~\eqref{eq:40}.\\
  There exists $\beta>0$ such that, for $|\Lambda|$ sufficiently large
  (depending only on $R$, $\rho'$, $\alpha$ and $\nu$), one has
  \begin{equation}
    \label{eq:36}
    \sup_{\varphi\in\mathcal{C}_{1,R}^+} \sup_{j\in
      G}\left|\int_0^1\Phi(t,\Lambda,J_{j,\Lambda},\varphi)dt-
      \exp\left(-\int_{-\infty}^{+\infty}
        \left(1-e^{-\varphi(x)}\right)dx\right)\right|\leq
    |\Lambda|^{-\beta}
  \end{equation}
  and
  \begin{multline}
    \label{eq:68}
    \sup_{\varphi\in\mathcal{C}_{1,R}^+} \sup_{j\in
      G}\left|\int_0^1\int_0^1\Phi(t,t',\Lambda,J_{j,\Lambda},\varphi)
      dtdt'-\exp\left(-2\int_{-\infty}^{+\infty}
        \left(1-e^{-\varphi(x)}\right)dx\right)\right| \\\leq
    |\Lambda|^{-\beta}.
  \end{multline}
\end{Le}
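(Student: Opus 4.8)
The plan is to obtain~\eqref{eq:36} and~\eqref{eq:68} by feeding the pointwise expansions of $\log\Phi$ obtained in the computations above into a trivial truncation argument in the parameter $t$ (resp.\ in $(t,t')$); all the analytic content is already in place, so what is left is only the bookkeeping of error exponents and a check of uniformity.

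First I would fix the auxiliary parameter $\kappa\in(0,1)$ as above and let $T_\Lambda\subset[0,1]$ be the set of $t$ satisfying~\eqref{eq:38}. Since the partition was chosen so that $|N(J_{j,\Lambda})|=|\Lambda|^{-\alpha}$ for every $j\in G$, formula~\eqref{eq:58} makes all the error terms below uniform in $j\in G$. Collecting~\eqref{eq:31},~\eqref{eq:42},~\eqref{eq:20} and~\eqref{eq:39} gives that there is $\beta_0>0$ such that, uniformly over $\varphi\in\mathcal{C}_{1,R}^+$, $j\in G$ and $t\in T_\Lambda$,
\begin{equation*}
  \Phi(t,\Lambda,J_{j,\Lambda},\varphi)=
  \exp\left(-\int_{-\infty}^{+\infty}\bigl(1-e^{-\varphi(u)}\bigr)\,du\right)
  \bigl(1+O(|\Lambda|^{-\beta_0})\bigr).
\end{equation*}
Then I would write $\int_0^1\Phi\,dt=\int_{T_\Lambda}\Phi\,dt+\int_{[0,1]\setminus T_\Lambda}\Phi\,dt$. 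On $T_\Lambda$ the displayed identity, together with the fact that $|T_\Lambda|=1+O(|\Lambda|^{-\beta_1})$ for some $\beta_1>0$ (this is the measure estimate recorded just before the statement, converted via~\eqref{eq:58}), produces the claimed main term up to an error $O(|\Lambda|^{-\min(\beta_0,\beta_1)})$; on the complement one simply uses $0\le\Phi\le1$, which contributes $O(|\Lambda|^{-\beta_1})$. Setting $\beta=\min(\beta_0,\beta_1)>0$ yields~\eqref{eq:36}. The proof of~\eqref{eq:68} is verbatim the same, with $T_\Lambda$ replaced by the set of pairs $(t,t')\in[0,1]^2$ that satisfy~\eqref{eq:38} for $t$ and for $t'$ as well as~\eqref{eq:37}: on that set~\eqref{eq:15} and~\eqref{eq:43} give $\Phi(t,t',\Lambda,J_{j,\Lambda},\varphi)=\exp(-2\int_{-\infty}^{+\infty}(1-e^{-\varphi(u)})\,du)(1+O(|\Lambda|^{-\beta_0}))$, its complement in $[0,1]^2$ has measure $O(|\Lambda|^{-\beta_1})$, and $0\le\Phi\le1$ everywhere.

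The one point that genuinely requires care --- and the closest thing to an obstacle --- is the uniformity and the strict positivity of the exponents. I would check that the implicit constants in~\eqref{eq:31},~\eqref{eq:42},~\eqref{eq:39} and~\eqref{eq:43} depend only on $R$, through the two constraints defining $\mathcal{C}_{1,R}^+$ in~\eqref{eq:65}, and not on the individual $\varphi$ or on $j$; and that, with $\rho'\in(0,\rho/(1+\rho))$ and $\alpha,\nu$ chosen so that~\eqref{eq:87} and~\eqref{eq:40} both hold, each of the exponents $(\alpha^{-1}-1)\kappa$, $(\rho-\rho')/(1+\rho')-d\nu\rho$ and the one appearing in~\eqref{eq:31} is strictly positive, so that indeed $\beta_0,\beta_1>0$. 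Finally, because $\kappa_\Lambda=1+o(1)$ by~\eqref{eq:42}, this factor disappears from the main terms, and the coefficient $2$ in~\eqref{eq:68} is exactly the one produced by~\eqref{eq:43}; with this, both displayed bounds follow and the lemma is proved.
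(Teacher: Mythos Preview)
Your proposal is correct and follows the same approach as the paper: the lemma is presented there as the summary of the preceding computations, and its proof consists precisely of the decomposition you describe --- the pointwise expansion of $\log\Phi$ on the good set $T_\Lambda$ (resp.\ its two-dimensional analogue), the bound $0\le\Phi\le1$ on the complement, and the measure estimate for that complement, all converted to powers of $|\Lambda|$ via~\eqref{eq:58}. Your explicit check that~\eqref{eq:40} is exactly what forces the exponent coming from~\eqref{eq:31} to yield a strictly positive $\beta_0$ after multiplication by $|N(J_{j,\Lambda})||\Lambda|$ matches the paper's remark that~\eqref{eq:40} is needed at this step.
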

\noindent Let us use Lemma~\ref{le:10} to complete the proof of
Lemma~\ref{le:9}. For $L\geq1$, let $\Lambda=\Lambda_L$.  Fix
$(\alpha_L)_{L\geq1}$ a sequence valued in $[1/2,2]$. Then, for
$\varphi\in\mathcal{C}_{1,R}^+$, the sequence
$(\varphi_{\alpha_L})_{L\geq1}$ is bounded in
$\mathcal{C}_{1,2R}^+$. Thus, by Lemma~\ref{le:10}, for $\kappa$ such
that $\kappa\beta d>1$ and $(\alpha_L)_{L\geq1}$, any sequence valued
in $[1/2,2]$, we have that
\begin{equation*}
  \sum_{j\in G}\sum_{L\geq1}
  \esp\left(\left[\int_0^1
      e^{-\langle\Xi(\omega,t,j,\Lambda_{L^\kappa}),\varphi_{\alpha_L}\rangle}dt-
      \exp\left(-\int_{-\infty}^{+\infty}
        \left(1-e^{-\varphi_{\alpha_L}(x)}\right)dx\right)
    \right]^2\right)<+\infty.
\end{equation*}
Thus, if $\alpha_L\to1$ as $L\to+\infty$, we have proved
Lemma~\ref{le:9}.\qed
\subsection{The proof of Lemma~\ref{le:8}}
\label{sec:proof-lemma-1}
Clearly, by~\eqref{eq:74} and~\eqref{eq:73}, to prove
Lemma~\ref{le:8}, it suffices to show that, for some $\beta>0$,
$\omega$-almost surely, one has
\begin{equation}
  \label{eq:70}
  \sup_{\substack{j\in G\\L^\kappa\leq L'\leq (L+1)^\kappa}}
  \left|\int_0^1
    e^{-\langle\Xi_{J_{j,\Lambda_{L^\kappa}}}(\omega,t,\Lambda_{L'}),
      \varphi\rangle}dt-\int_0^1
    e^{-\langle\Xi_{J_{j,\Lambda_{L^\kappa}}}(\omega,t,\Lambda_{L^\kappa}),
      \varphi_{\alpha_{L'}}\rangle}dt\right|\lesssim L^{-\beta}
\end{equation}  
where $\alpha_{L'}=|\Lambda_{L'}|/|\Lambda_{L^\kappa}|$. Notice here
that we chose the same partition of $J$ into
$(J_{j,\Lambda_{L^\kappa}})_j$ for all $L^\kappa\leq L'\leq
(L+1)^\kappa$ which is possible as
$|\Lambda_{L'}|=|\Lambda_{L^{\nu}}|(1+o(1))$.\\
For $\Lambda'\subset\Lambda$, let $E_1(\omega,\Lambda,\Lambda')\leq
E_2(\omega,\Lambda,\Lambda')\leq \cdots\leq
E_{N(J,\Lambda,\Lambda',\omega)}(\omega,\Lambda,\Lambda')$ be the
eigenvalues of $H_\omega(\Lambda)$ in $J$ with localization center in
$\Lambda'$, and, thus, $N(J,\Lambda,\Lambda',\omega)$ be their number
which is random. Recall that
$N(J,\Lambda,\omega)=N(J,\Lambda,\Lambda,\omega)$ denotes the number
of eigenvalues of  $H_\omega(\Lambda)$ in $J$.\\
In Lemma~\ref{le:6}, we prove that most eigenvalues of
$H_\omega(\Lambda_{L'})$ and of $H_\omega(\Lambda_{L^{\nu}})$ in $J$
have center of localization in $\Lambda_{(L-1)^\nu}$; this is
essentially a consequence of the description given by
Theorem~\ref{thr:vbig1}. Thus, by Lemma~\ref{le:14}, these eigenvalues
of $H_\omega(\Lambda_{L'})$ and of $H_\omega(\Lambda_{L^{\nu}})$ are
close to one another. We can then use Lemma~\ref{le:4} to compare
$\Xi_{J_{j,\Lambda_{L^\kappa}}}(\omega,t,\Lambda_{L'})$ and
$\Xi_{J_{j,\Lambda_{L^\kappa}}}(\omega,t,\Lambda_{L^\kappa})$.\\
We prove
\begin{Le}
  \label{le:6}
  Pick $\nu>0$. There exists $\beta>0$ such that, $\omega$-almost
  surely, for $L$ sufficiently large and $L^\kappa\leq L'\leq
  (L+1)^\kappa$ and $j\in G$, one has
  \begin{enumerate}
  \item
    \begin{equation*}
      \left|\frac{N(J_{j,\Lambda_{L^\kappa}},\Lambda_{L'},
          \Lambda_{(L-1)^\nu},\omega)}
        {N(J_{j,\Lambda_{L^\kappa}},\Lambda_{L^\kappa},
          \Lambda_{(L-1)^\nu},\omega)}-1\right|+
      \left|\frac{N(J_{j,\Lambda_{L^\kappa}},\Lambda_{L'},
          \Lambda_{(L-1)^\nu},\omega)}
        {N(J_{j,\Lambda_{L^\kappa}},\Lambda_{L'},\omega)}-1\right|
      \lesssim L^{-\beta};
    \end{equation*}
  \item
    \begin{equation*}
      \left|\frac{N(K_{j,\Lambda_{L^\kappa}},\Lambda_{L'},\omega)}
        {N(J_{j,\Lambda_{L^\kappa}},\Lambda_{L'},\omega)}-1\right|+
      \left|\frac{N(K_{j,\Lambda_{L^\kappa}},\Lambda_{L^\kappa},\omega)}
        {N(J_{j,\Lambda_{L^\kappa}},\Lambda_{L^\kappa},\omega)}-1\right|
      \lesssim L^{-\beta}
    \end{equation*}
    where $(K_{j,\Lambda})_j$ are defined in the beginning of
    section~\ref{sec:reduct-local-behav};
  \item to each eigenvalue of $H_\omega(\Lambda_{L'})$ in
    $K_{j,\Lambda_{L^\kappa}}$ with localization center in
    $\Lambda_{(L-1)^\nu}$, say, $E$, one can associate an eigenvalue
    of $H_\omega(\Lambda_{L^\kappa})$ in $J_{j,\Lambda_{L^\kappa}}$, say,
    $E'$, such that $|E-E'|\leq L^{-3d\nu}$;
  \item to each eigenvalue of $H_\omega(\Lambda_{L^\kappa})$ in $J$ with
    localization center in $\Lambda_{(L-1)^\nu}$ in
    $K_{j,\Lambda_{L^\kappa}}$, say, $E$, one can associate an eigenvalue
    of $H_\omega(\Lambda_{L'})$ in $J_{j,\Lambda_{L^\kappa}}$, say, $E'$,
    such that $|E-E'|\leq L^{-3d\nu}$.
  \end{enumerate}
\end{Le}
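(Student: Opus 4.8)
The plan is to treat the four assertions essentially separately, drawing on the cube decomposition and counting estimates of Theorems~\ref{thr:vbig1} and~\ref{thr:4} for the counting statements (1) and (2), and on the exponential localization of Lemma~\ref{le:14} for the matching statements (3) and (4), and then to assemble the resulting high--probability bounds by a Borel--Cantelli argument. Two elementary geometric remarks will be used repeatedly: since $L^\kappa\le L'\le(L+1)^\kappa$ (and $\Lambda_{L^\kappa}\subset\Lambda_{L'}$), the distance separating $\Lambda_{(L-1)^\nu}$ from $\partial\Lambda_{L^\kappa}$, and a fortiori from $\partial\Lambda_{L'}$, is of order $L^{\kappa-1}$, hence $\gg(\log|\Lambda_{L^\kappa}|)^{1/\xi}$, the localization length of Lemma~\ref{le:14}; and the shell $\Lambda_{L'}\setminus\Lambda_{(L-1)^\nu}$ has volume a fraction $O(L^{-1})$ of $|\Lambda_{L'}|$.

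For (3) and (4) I would proceed by transplantation of eigenfunctions between the two tori. Take $E$ an eigenvalue of $H_\omega(\Lambda_{L'})$ lying in $K_{j,\Lambda_{L^\kappa}}$ with a normalized eigenfunction $\varphi$ whose localization center $x$ lies in $\Lambda_{(L-1)^\nu}$; on the set where Lemma~\ref{le:14} applies, $\|\varphi\|_y\le|\Lambda_{L'}|^q e^{-|y-x|^\xi}$. Let $\psi$ be $\varphi$ multiplied by a cutoff equal to $1$ on a neighborhood of $\Lambda_{(L-1)^\nu}$ of width $\gg(\log|\Lambda_{L^\kappa}|)^{1/\xi}$ but $\ll L^{\kappa-1}$ and supported in $\Lambda_{L^\kappa}$ (in the discrete case one may simply take $\psi=\car_{\Lambda_{L^\kappa}}\varphi$). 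Then $\|\psi\|=1-o(1)$, and $H_\omega(\Lambda_{L^\kappa})$ agrees with $H_\omega(\Lambda_{L'})$ except near $\partial\Lambda_{L^\kappa}$, near $\partial\Lambda_{L'}$ and on the support of the gradient of the cutoff --- three regions on which $\varphi$, hence the discrepancy $(H_\omega(\Lambda_{L^\kappa})-E)\psi$, is bounded by $|\Lambda_{L^\kappa}|^{O(1)}e^{-c(\log|\Lambda_{L^\kappa}|)}$, which is $\le L^{-3d\nu}$ once the width above is chosen a large enough fixed multiple of $(\log|\Lambda_{L^\kappa}|)^{1/\xi}$. Since $H_\omega(\Lambda_{L^\kappa})$ has compact resolvent, it then has an eigenvalue $E'$ with $|E-E'|\lesssim L^{-3d\nu}$; and as the endpoints of $J_{j,\Lambda_{L^\kappa}}$ are at distance at least $|\Lambda_{L^\kappa}|^{-\alpha'}$ from $K_{j,\Lambda_{L^\kappa}}$, one gets $E'\in J_{j,\Lambda_{L^\kappa}}$ for $L$ large. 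Exchanging the roles of $\Lambda_{L'}$ and $\Lambda_{L^\kappa}$ yields (4).

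For (1) and (2) I would count eigenvalues through small cubes. Applying Theorem~\ref{thr:vbig1} with $I_\Lambda=J_{j,\Lambda_{L^\kappa}}$ (admissible by the choice~\eqref{eq:58} and the discussion of section~\ref{sec:reduct-local-behav}) to $H_\omega(\Lambda_{L'})$ and to $H_\omega(\Lambda_{L^\kappa})$, all but $o(|N(J_{j,\Lambda_{L^\kappa}})||\Lambda|)$ of the eigenvalues in $J_{j,\Lambda_{L^\kappa}}$ are put in one--to--one correspondence with small cubes $\Lambda_{\ell_\Lambda}(\gamma_k)$ carrying an eigenvalue in $J_{j,\Lambda_{L^\kappa}}$, independently with probability $|N(J_{j,\Lambda_{L^\kappa}})||\Lambda_{\ell_\Lambda}|(1+o(1))$ by Lemma~\ref{le:7}, and with the localization centers of the corresponding eigenvalues of $H_\omega(\Lambda)$ lying in the corresponding cubes (by (2)--(3) of Theorem~\ref{thr:vbig1}). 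A large--deviation estimate as in Lemma~\ref{le:5} then gives, with probability $\ge1-|\Lambda_{L^\kappa}|^{-p}$, that each of the counts $N(J_{j,\Lambda_{L^\kappa}},\Lambda_{L'},\Lambda_{(L-1)^\nu},\omega)$, $N(J_{j,\Lambda_{L^\kappa}},\Lambda_{L^\kappa},\Lambda_{(L-1)^\nu},\omega)$ and $N(J_{j,\Lambda_{L^\kappa}},\Lambda_{L'},\omega)$ equals $|N(J_{j,\Lambda_{L^\kappa}})|$ times the corresponding volume up to a relative error $|\Lambda_{L^\kappa}|^{-\delta}$; as $|\Lambda_{(L-1)^\nu}|/|\Lambda_{L'}|=1+O(L^{-1})$, this gives (1). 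Statement (2) is the same argument applied to $J_{j,\Lambda_{L^\kappa}}\setminus K_{j,\Lambda_{L^\kappa}}$, of $N$--mass $2|\Lambda_{L^\kappa}|^{-\alpha'}$: the number of eigenvalues it carries is a fraction $O(|\Lambda_{L^\kappa}|^{\alpha-\alpha'})$ of $N(J_{j,\Lambda_{L^\kappa}},\cdot,\omega)$, and the choice $\alpha'<\min[1,\alpha(1+2\rho')/(1+\rho')]$ ensures that this interval still satisfies the hypotheses of Theorems~\ref{thr:vbig1} and~\ref{thr:4}.

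Finally, to upgrade these statements to the $\omega$--almost sure one, I would intersect, for each $L$, all the exceptional sets above over $j\in G$ --- at most $|\Lambda_{L^\kappa}|^\alpha$ of them by~\eqref{eq:88} --- and over the (polynomially many in $L$) relevant values of $L'$, using that each such set has probability at least $1-|\Lambda_{L^\kappa}|^{-p}$ for any $p$ by~\eqref{eq:11}, Theorem~\ref{thr:4} and Lemma~\ref{le:14}; taking $p$ large enough that $\sum_L|\Lambda_{L^\kappa}|^{\alpha-p}<\infty$ and invoking Borel--Cantelli, assertions (1)--(4) hold $\omega$--almost surely for all large $L$, all $L^\kappa\le L'\le(L+1)^\kappa$ and all $j\in G$. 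I expect the main obstacle to be (3)--(4): one must carry out the transplantation of eigenfunctions across the change of boundary identification carefully --- which is exactly what forces the localization centers to sit well inside the smaller cube --- and check that the resulting (essentially sub--polynomial) transplantation error stays below both the target resolution $L^{-3d\nu}$ and the width $|\Lambda_{L^\kappa}|^{-\alpha'}$ of the safety margin built into $K_{j,\Lambda_{L^\kappa}}$; once this is settled, the counting statements (1) and (2) are routine applications of the machinery of Theorems~\ref{thr:vbig1} and~\ref{thr:4}.
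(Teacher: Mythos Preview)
Your argument is correct; the difference from the paper lies in how you handle (3) and (4). The paper does not transplant eigenfunctions directly via Lemma~\ref{le:14}. Instead it observes that one can run Theorem~\ref{thr:vbig1} with the \emph{same} small-cube decomposition (same scale $\ell_\Lambda$, same cubes $\Lambda_{\ell_\Lambda}(\gamma)$) simultaneously for all $L^\kappa\le L'\le(L+1)^\kappa$. An eigenvalue of $H_\omega(\Lambda_{L'})$ with localization center in some $\Lambda_{\ell_\Lambda}(\gamma)\subset\Lambda_{(L-1)^\nu}$ and the corresponding eigenvalue of $H_\omega(\Lambda_{L^\kappa})$ are then both within $|\Lambda|^{-R}$ of the \emph{same} small-cube eigenvalue $E_n(\omega,\Lambda_{\ell_\Lambda}(\gamma))$ by~\eqref{eq:23}, and the triangle inequality yields (3)--(4) without any quasi-mode construction. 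This route also gives (1) essentially for free, since the number of small cubes not contained in $\Lambda_{(L-1)^\nu}$ is $O(L^{\nu d-1})$. Your direct transplantation is a valid and more self-contained alternative---it uses only Lemma~\ref{le:14} rather than the full approximation theorem---but the paper's approach is shorter precisely because the small-cube machinery has already been set up and can be reused. For (1), (2) and the Borel--Cantelli wrap-up your treatment coincides with the paper's.
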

\noindent We now can apply Lemma~\ref{le:9} to
$\langle\Xi_{J_{j,\Lambda_{L^\kappa}}}(\omega,t,\Lambda_{L'}),\varphi\rangle$
and $\langle\Xi_{J_{j,\Lambda_{L^\kappa}}}(\omega,t,\Lambda_{L^\kappa}),
\varphi_{\alpha_L'}\rangle$. By Lemma~\ref{le:6}, the assumptions of
Lemma~\ref{le:9} will be satisfied if, using the notations of
Lemma~\ref{le:9}, we take
\begin{itemize}
\item $X_p$ to be the eigenvalues of $H_\omega(\Lambda_{L'})$ in
  $K_{j,\Lambda_{L^\kappa}}$ with localization center in
  $\Lambda_{(L-1)^\nu}$,
\item $Y_p$ to be the eigenvalues of $H_\omega(\Lambda_{L^\kappa})$ in
  $K_{j,\Lambda_{L^\kappa}}$ with localization center in
  $\Lambda_{(L-1)^\nu}$.
\end{itemize}
Indeed, Lemma~\ref{le:6} then provides the estimates
\begin{equation*}
  0\leq a_p \lesssim L^{-\beta},\quad 0\leq K_p\leq
  CL^{d\nu+1}\quad\text{ and }\quad0\leq \varepsilon_p\leq L^{-3d\nu}. 
\end{equation*}
Then,~\eqref{eq:75} and, thus, Lemma~\ref{le:8}, is an immediate
consequence of Lemma~\ref{le:9} (where one of the functions $\varphi$
has been replaced with $\varphi_{\alpha_{L'}}$). This completes the
proof of Lemma~\ref{le:8}.\qed
\begin{proof}[Proof of Lemma~\ref{le:6}]
  First, in Theorem~\ref{thr:vbig1} (see the proofs in~\cite{Ge-Kl:10}
  for more details), for $L^\kappa\leq L'\leq (L+1)^\nu$, one can pick
  the same scale $\ell$. Then, by Theorem~\ref{thr:vbig1} (for
  $R>2\nu$), for any $p>0$ and some $\beta>0$, we know that, with a
  probability at least $1-L^{-p}$, for $L^\kappa\leq L'\leq (L+1)^\nu$
  and $j\in G$ (recall that $\#G\leq|\Lambda_L|^{-\beta}$), up to at
  most $N(J_{j,\Lambda_{L^\kappa}})|\Lambda_{L'}||\Lambda_L|^{-\beta}$ of
  them, the eigenvalues of $H_\omega(\Lambda_{L'})$ in
  $J_{j,\Lambda_{L^\kappa}}$ are given by those of the operators
  $(H_\omega(\Lambda_{\ell}(\gamma))_\gamma)$ up to an error bounded
  by $|\Lambda|^{-2}$. In particular, up to at most
  $N(J_{j,\Lambda_{L^\kappa}})|\Lambda_{L'}||\Lambda_L|^{-\beta}$ of
  them, the eigenvalues of $H_\omega(\Lambda_{L'})$ in
  $J_{j,\Lambda_{L^\kappa}}$ with localization center in
  $\Lambda_{(L-1)^\nu}$ and of $H_\omega(\Lambda_{L^\kappa})$ in
  $J_{j,\Lambda_{L^\kappa}}$ with localization center in
  $\Lambda_{(L-1)^\nu}$ are the same up to an error bounded by
  $CL^{-2d}$. Moreover, the number of cubes
  $(\Lambda_{\ell}(\gamma))_\gamma$ that are not contained in
  $\Lambda_{(L-1)^\nu}$ is bounded by $CL^{\nu d-1}$ which is itself
  bounded by $CN(J_{j,\Lambda_{L^\kappa}})|\Lambda_{L'}|
  |\Lambda_L|^{-\beta}$. Thus, if one pick $p>1$, the Borel-Cantelli
  Lemma tells us that (1), (3) and (4) of Lemma~\ref{le:6} are almost
  surely  fulfilled.\\
  To prove that (2) is also almost surely true, we use the estimates
  on large deviations given by Theorem~\ref{thr:4} on the sets
  $J_{j,\Lambda_{L^\kappa}}\setminus K_{j,\Lambda_{L^\kappa}}$ that are of
  size $L^{-d/2}$. We thus obtain that, with probability at least
  $1-e^{-L^{d\nu/4}}$, for $L^\kappa\leq L'\leq (L+1)^\nu$, the number of
  eigenvalues of of $H_\omega(\Lambda_{L'})$ in
  $J_{j,\Lambda_{L^\kappa}}\setminus K_{j,\Lambda_{L^\kappa}}$ is bounded by
  $N(J_{j,\Lambda_{L^\kappa}})|\Lambda_{L'}|L^{-1}$. Thus, using again
  the Borel-Cantelli Lemma and (1), we obtain (2).\\
  This completes the proof of Lemma~\ref{le:6}.
\end{proof}
\subsection{The proof of Theorem~\ref{thr:3}}
\label{sec:proof-theorem-1}
It follows the same analysis as the proof of Theorem~\ref{thr:2};
thus, we do not give any details. We distinguish two cases. First if
$|N(I_\Lambda)|\leq |\Lambda|^{-\alpha}$ for $\alpha$ chosen as
prescribed in section~\ref{sec:reduct-local-behav} (see also the
comments following Theorem~\ref{thr:vbig1}). In this case, we can
apply Theorem~\ref{thr:vbig1} to the interval $I_\Lambda$ as it
satisfies all the assumptions of Theorem~\ref{thr:vbig1} if we choose
the scales $\ell_\Lambda\asymp |N(I_\Lambda)|^{-\nu}$ for some $\nu$
satisfying~\eqref{eq:25}. We then follow the proof of
Theorem~\ref{thr:2} for this single interval to obtain
Theorem~\ref{thr:3}. If $|N(I_\Lambda)|\geq |\Lambda|^{-\alpha}$, we
again split the interval into intervals of size $|\Lambda|^{-\alpha}$
to apply Theorem~\ref{thr:vbig1} to each of those, actually, to most
of those. Indeed, up to the renormalization of $N$ so that it has unit
mass on $I_\Lambda$ we have brought ourselves back to the proof of
Theorem~\ref{thr:2}
\begin{Rem}
  \label{rem:4}
  We see that the first condition in~\eqref{eq:63} is needed only when
  the interval $I_\Lambda$ is very small. Actually, one needs it for
  $|I_\Lambda|$ smaller than $|\Lambda|^{-\nu}$ for some $\nu>0$.
\end{Rem}
\noindent The condition~\eqref{eq:64} is needed to obtain the results
corresponding to Lemmas~\ref{le:8} and~\ref{le:6}. In
Lemmas~\ref{le:8} and~\ref{le:6}, the error estimate is of size an
inverse power of $L$; in the corresponding result in the present
setting, it is replaced by $o(1)$ coming from condition~\eqref{eq:64}.
\subsection{The proof of Theorem~\ref{thr:5}}
\label{sec:proof-theorem-2}
Theorem~\ref{thr:5} follows from Theorem~\ref{thr:2}, Lemma~\ref{le:4}
and the fact that most eigenvalues of $H_\omega$ in $J$ with
localization center in $\Lambda$ are very well approximated by an
eigenvalue of $H_\omega(\Lambda)$ in $J$, and vice versa.\\
Write $J=[a,b]$. Using the techniques of the proof of
Lemma~\ref{le:6}, one proves the following result for the eigenvalues
of $H_\omega$ is $J$ having localization center in $\Lambda$
\begin{Le}
  \label{le:12}
  Fix $\nu\in(0,1)$. There exists $\beta>0$ such that, $\omega$-almost
  surely, for $L$ sufficiently large, one has
  \begin{enumerate}
  \item
    \begin{equation*}
      \left|\frac{N^f(J,\Lambda,\omega)}{N(J,\Lambda,\omega)}-1\right|
      \leq|\Lambda|^{-\beta};
    \end{equation*}
  \item to each eigenvalue of $H_\omega(\Lambda_L)$ in
    $J_L:=[a+L^{-3d/2},b-L^{-3d/2}]$ with localization center in
    $\Lambda_{L-L^\kappa}$, say, $E$, one can associate an eigenvalue
    of $H_\omega$ in $J$ with localization center in $\Lambda_L$, say,
    $E'$, such that $|E-E'|\leq L^{-2d}$;
  \item to each eigenvalue of $H_\omega$ in $J_L$ with localization
    center in $\Lambda_{L-L^\kappa}$, say, $E$, one can associate an
    eigenvalue of $H_\omega(\Lambda_L)$ in $J$, say, $E'$, that
    satisfies $|E-E'|\leq L^{-2d}$.
  \end{enumerate}
\end{Le}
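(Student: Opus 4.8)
The plan is to follow the strategy of the proof of Lemma~\ref{le:6}, now comparing $H_\omega(\Lambda_L)$ with the full-space operator $H_\omega$ rather than two finite-volume operators. The ingredients are: the exponential decay of eigenfunctions of $H_\omega(\Lambda_L)$ away from their localization centers (Lemma~\ref{le:14}, valid off an event of probability $\le L^{-p}$), the corresponding full-space statement Proposition~\ref{pro:1}(1), the approximation Theorem~\ref{thr:vbig1} together with its corollary Theorem~\ref{thr:4}, and the estimates (W) and (M). All exceptional events will be arranged to have probability $\le L^{-p}$ for $p$ as large as wanted, so that Borel--Cantelli turns ``for $L$ large with probability $\ge1-L^{-p}$'' into ``$\omega$-a.s.\ for all $L$ large''.

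For parts (2) and (3) I would run the standard geometric perturbation argument. Given an eigenvalue $E\in J_L$ of $H_\omega(\Lambda_L)$ with normalized eigenfunction $\varphi$ and localization center $x_0\in\Lambda_{L-L^\kappa}$, Lemma~\ref{le:14} gives (off an event of probability $\le L^{-p}$) $\|\varphi\|_x\le L^q e^{-|x-x_0|^\xi}$, so $\varphi$ is smaller than any power of $L$ within bounded distance of $\partial\Lambda_L$ because $\mathrm{dist}(x_0,\partial\Lambda_L)\ge L^\kappa$. Since $H_\omega$ and $H_\omega(\Lambda_L)$ (periodic boundary conditions) act identically on functions supported well inside $\Lambda_L$, the zero-extension $\tilde\varphi$ of $\varphi$ satisfies $\|\tilde\varphi\|=1+o(1)$ and $\|(H_\omega-E)\tilde\varphi\|\lesssim L^q e^{-cL^{\kappa\xi}}$; hence $\mathrm{dist}(E,\sigma(H_\omega))\lesssim L^q e^{-cL^{\kappa\xi}}$. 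As this is $\ll L^{-3d/2}$ and $E\in J_L$, the nearest spectral point lies in $J$ and, being in the localization region, is an eigenvalue $E'$ with $|E-E'|\le L^{-2d}$ for $L$ large. Applying $\car_{[E'-\eta,E'+\eta]}(H_\omega)$ to $\tilde\varphi$ for $\eta$ a small inverse power of $L$, and using (M) to know that off an event of probability $\le L^{-p}$ this window carries a single eigenvalue, shows the eigenfunction of $E'$ is $L^{-c}$-close to $\tilde\varphi$, hence has localization center within $O((\log L)^{1/\xi})$ of $x_0$, so in $\Lambda_L$. Part (3) is the mirror argument: restrict to $\Lambda_L$ an eigenfunction of $H_\omega$ for $E\in J_L$ with center in $\Lambda_{L-L^\kappa}$, whose decay is supplied by Proposition~\ref{pro:1}(1), and repeat.

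For part (1) I would exploit the correspondence just built. Parts (2) and (3) identify the eigenvalues of $H_\omega(\Lambda_L)$ in $J_L$ with center in $\Lambda_{L-L^\kappa}$ with those of $H_\omega$ in $J_L$ with center in $\Lambda_{L-L^\kappa}$; off an event of probability $\le L^{-p}$ this identification is bijective, since by (M) no two such eigenvalues lie within $2L^{-2d}$. Hence $N^f(J,\Lambda,\omega)$ and $N(J,\Lambda,\omega)$ differ only by eigenvalues outside this common set: those in the energy shell $J\setminus J_L$ and those with center in the spatial shell $\Lambda_L\setminus\Lambda_{L-L^\kappa}$. The former are bounded by (W): $\esp[\#\{\text{e.v.\ in }J\setminus J_L\}]\lesssim|J\setminus J_L|\,|\Lambda|\lesssim L^{-d/2}$, so by Markov and Borel--Cantelli their number is $\le|N(J)||\Lambda|^{1-\beta}$ a.s.\ for $L$ large; the latter are bounded, as in Lemma~\ref{le:6}, by applying Theorem~\ref{thr:vbig1} on the small cubes inside the shell (there are $\lesssim L^{d-1+\kappa}\ell_\Lambda^{-d}$ of them) and summing over the energy pieces $J_{j,\Lambda}$ of section~\ref{sec:reduct-local-behav}, giving $\lesssim|N(J)|\,L^{d-1+\kappa}=|N(J)||\Lambda|\,L^{\kappa-1}$ (the ``bad'' pieces $j\in B$ carry total $N$-weight $O(|\Lambda|^{-\alpha\rho'/(1+\rho')})$ and are absorbed). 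Combined with $N(J,\Lambda,\omega)=|N(J)||\Lambda|(1+o(1))$ a.s.\ (the macroscopic large-deviation estimate following Theorem~\ref{thr:4}), this gives $\big|N^f(J,\Lambda,\omega)/N(J,\Lambda,\omega)-1\big|\lesssim|\Lambda|^{-\beta}$ for a suitable $\beta>0$.

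The hard part will be this boundary-layer accounting: one must verify that the unmatched eigenvalues --- near $\partial\Lambda_L$ spatially and near $\partial J$ in energy --- are a negligible fraction of the $\asymp|N(J)||\Lambda|$ total \emph{and} that the correspondence of parts (2)--(3) is injective, so that neither missing nor doubled eigenvalues corrupt the count. Both reduce to careful bookkeeping with (W), (M) and Theorem~\ref{thr:vbig1} on the small cubes; the additional constraint that everything hold $\omega$-a.s.\ for \emph{every} large $L$ is what forces all the exceptional probabilities to be $\le L^{-p}$ with $p$ arbitrarily large.
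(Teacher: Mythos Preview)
Your proposal is correct and consistent with the paper's own treatment, which amounts to the single sentence ``Using the techniques of the proof of Lemma~\ref{le:6}, one proves the following result''. You have in fact supplied more detail than the paper does.

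The one genuine methodological difference is in how you establish parts (2) and (3). Following Lemma~\ref{le:6} literally means invoking Theorem~\ref{thr:vbig1} (and its full-space analogue from~\cite{Ge-Kl:10}, which underlies Proposition~\ref{pro:1}) to show that, on the good event, both the eigenvalues of $H_\omega(\Lambda_L)$ in $J_{j,\Lambda}$ and the eigenvalues of $H_\omega$ with center in $\Lambda_L$ are, up to a negligible fraction, given by the \emph{same} small-cube eigenvalues $E_n(\omega,\Lambda_{\ell_\Lambda}(\gamma))$ to accuracy $|\Lambda|^{-R}$; the matching $E\leftrightarrow E'$ then comes for free. You instead run a direct quasimode argument using Lemma~\ref{le:14} and Proposition~\ref{pro:1}(1). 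Your route is more elementary and avoids re-opening the box-decomposition machinery, but it forces you to argue separately that the resulting full-space eigenvalue has its center in $\Lambda_L$; note that your appeal to (M) for the infinite-volume window is not literally assumption (M), so either derive it from the finite-volume Minami via the very approximation you are proving (on a slightly larger box $\Lambda_{2L}$, say), or bypass it by observing that $\car_{[E-L^{-2d},E+L^{-2d}]}(H_\omega)\tilde\varphi$ is $o(1)$-close to $\tilde\varphi$ and hence, by the decay in Proposition~\ref{pro:1}(1), at least one contributing eigenfunction must have its center within $O((\log L)^{1/\xi})$ of $x_0$. The paper's small-cube route sidesteps this point entirely, since the localization-center information is already encoded in which $\Lambda_{\ell_\Lambda}(\gamma)$ carries the eigenvalue.

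For part (1) your bookkeeping matches the paper's pattern; just replace the Markov-from-(W) bound on the energy shell $J\setminus J_L$ (which is not summable in $L$ for $d\le2$) by the large-deviation estimate of Theorem~\ref{thr:4}, exactly as in the proof of Lemma~\ref{le:6}(2).
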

\noindent One then uses this to combine Theorem~\ref{thr:2} and
Lemma~\ref{le:4} to obtain Theorem~\ref{thr:5}.
\section{The proof of Theorems~\ref{thr:8} and~\ref{thr:7}}
\label{sec:proof-theorems}
These proofs are simple and rely on general theorems on
transformations of point processes (see
e.g.~\cite[Chap. 5.5]{MR1700749} and \cite[Chap. 3.5]{MR2364939}).
\subsection{The proof of Theorem~\ref{thr:8}}
\label{sec:proof-theorem-3}
As in the proof of Theorem~\ref{thr:2}, it suffices to consider the
case when $J$ is an interval in the essential support of $\nu$, that
is, $N$ is strictly increasing on $J$. In particular, one has
$\nu(t)>0$ for almost every $t\in J$. \\
If $t$ is a random variable distributed according to the law
$\nu_J(t)dt$, then $\tilde t:=N_J(t)$ is uniformly distributed on
$[0,1]$. Thus, the process $\Xi_J(\omega,\tilde t,\Lambda)$ under the
uniform law in $\tilde t$ has the same law as the process
$\Xi_J(\omega,N_J(t),\Lambda)$ under the law $\nu_J(t)dt$. \\
Rewrite the point measures $\Xi_J(\omega,N_J(t),\Lambda)$ and
$\tilde\Xi_J(\omega,t,\Lambda)$ as
\begin{equation*}
  \Xi_J(\omega,N_J(t),\Lambda) = \sum_{E_n(\omega,\Lambda)\in J}
  \delta_{x_n(\omega,t)}\quad\text{ and }\quad
  \tilde\Xi_J(\omega,t,\Lambda) = \sum_{E_n(\omega,\Lambda)\in J}
  \delta_{\tilde x_n(\omega,t)}
\end{equation*}
where
\begin{gather*}
  x_n(\omega,t):=|N(J)||\Lambda|[N_J(E_n(\omega,\Lambda))-N_J(t)]
  =|\Lambda|[N(E_n(\omega,\Lambda))-N(t)]\\
  \intertext{ and } \tilde
  x_n(\omega,t):=\nu(t)|\Lambda|[E_n(\omega,\Lambda)-t].
\end{gather*}
Thus, one has
\begin{equation}
  \label{eq:78}
  x_n(\omega,t)=\varpi_\Lambda(\tilde
  x_n(\omega,t);t)\quad\text{ and }\quad \tilde
  x_n(\omega,t)=\chi_\Lambda(x_n(\omega,t);t)
\end{equation}
where
\begin{gather*}
  \varpi_\Lambda(x;t)=|\Lambda|\left[N\left(t+\frac{x}{\nu(t)|\Lambda|}\right)
    - N(t)\right]\\\intertext{and}\chi_\Lambda(x;t)=\nu(t)|\Lambda|
  \left[N^{-1}\left(N(t)+\frac{x}{|\Lambda|}\right)-t\right]
\end{gather*}
where $N^{-1}$ is the inverse of the Lipschitz continuous, strictly
increasing function $N$.\\
Note that, if $N(J,\Lambda,\omega)$ denotes the number of eigenvalues
of $H_\omega(\Lambda)$ in $J$, one has
\begin{equation}
  \label{eq:77}
  t=\frac1{N(J,\Lambda,\omega)}\cdot
  N^{-1}\left(\sum_{E_n(\omega,\Lambda)\in J}N(E_n(\omega))
    -\frac{x_n}{|\Lambda|}\right).
\end{equation}
Following the notations of~\cite{MR2364939}, let $\mathcal{M}_p(\R)$
denote the space of point measures on the real line endowed with its
standard metric structure. Actually, by Minami's estimate (M), we
could restrict ourselves to working with simple point measures.\\
The point processes $\Xi_J(\omega,N_J(t),\Lambda)$ and
$\tilde\Xi_J(\omega,t,\Lambda)$ under the law $\nu_J(t)dt$ are the
random processes (i.e. the Borelian random variables) obtained as
push-forwards of the probability measure $\nu_J(t)dt$ through the maps
$\D t\in\R\mapsto\Xi_J(\omega,N_J(t),\Lambda)\in\mathcal{M}_p(\R)$ and
$\D t\in\R\mapsto\tilde\Xi_J(\omega,t,\Lambda)\in\mathcal{M}_p(\R)$.
We denote them respectively by $\Xi_J(\omega,\Lambda)$ and
$\tilde\Xi_J(\omega,\Lambda)$.\\
One can extend the mapping $x\in\R\mapsto\chi_\Lambda(x,t)\in\R$ to a
map, say, $\chi_{\omega,\Lambda}$ on point measures in
$\mathcal{M}_p(\R)$ on the real line by just mapping the supports
pointwise onto one another and computing $t$ using~\eqref{eq:77} i.e.
\begin{equation*}
  \chi_{\omega,\Lambda}\left(\sum_{n}a_n\delta_{x_n}\right)
  =\sum_{n}a_n\delta_{\chi_{\omega,\Lambda}
    \left(x_n;t\left(\sum_{n}a_n\delta_{x_n}\right)\right)}
\end{equation*}
where $t\left(\sum_{n}a_n\delta_{x_n}\right)$ is defined as
\begin{equation*}
  t\left(\sum_{n}a_n\delta_{x_n}\right)=
  \frac1{N(J,\Lambda,\omega)}\sum_{E_n(\omega,\Lambda)\in
    J}N^{-1}\left(N(E_n(\omega))-\frac{x_n}{|\Lambda|}\right).
\end{equation*}
For fixed $\Lambda$ and $\omega$, the map $\chi_{\omega,\Lambda}:\
\mathcal{M}_p(\R)\to\mathcal{M}_p(\R)$ is measurable as the map
$t\mapsto\chi_\Lambda(x,t)$ is. Moreover, by the computations made
above (see~\eqref{eq:78} and~\eqref{eq:77}), one has
\begin{equation}
  \label{eq:79}
  \chi_{\omega,\Lambda}(\Xi_J(\omega,\Lambda))=
  \tilde\Xi_J(\omega,\Lambda).
\end{equation}
For any $x\in R$, $t$ almost surely, one has $\chi_\Lambda(x;t)\to x$
as $|\Lambda|\to+\infty$. Hence, as $|\Lambda|\to+\infty$,
$\chi_{\omega,\Lambda}$ tends to the identity except on at most a set
of measure $0$ in $\mathcal{M}_p(\R)$. On the other hand,
Theorem~\ref{thr:8} tells us that, $\omega$ almost surely,
$\Xi_J(\omega,\Lambda)$ converges in law to the Poisson process of
intensity $1$ on the real line. Thus, we can apply~\cite[Theorem
5.5]{MR1700749} to obtain that, $\omega$-almost surely,
$\tilde\Xi_J(\omega,\Lambda)$, that is,
$\tilde\Xi_J(\omega,t,\Lambda)$ under the measure $\nu_J(t)dt$,
converges in law to the Poisson process of intensity $1$ on the real
line. This completes the proof of Theorem~\ref{thr:8}.\qed
\subsection{The proof of Theorem~\ref{thr:7}}
\label{sec:proof-theorem-4}
To complete this proof, recalling the notations of
Theorem~\ref{thr:7}, we notice that, for $x>0$,
\begin{multline*}
  \left\{E_j\in J;\ \frac{|N(J)|}{|J|} |\Lambda|
    (E_{j+1}(\omega,\Lambda)-E_j(\omega,\Lambda))\geq x\right\}
  \\=\left\{E_j\in J;\ \nu(t) |\Lambda|
    (E_{j+1}(\omega,\Lambda)-E_j(\omega,\Lambda))\geq
    \nu_J(t)\cdot|J|\,x \right\}.
\end{multline*}
Thus, integration with respect to $\nu_J(t)dt$ over $J$,
Theorem~\ref{thr:8} and the same computations as those made to obtain
Proposition 4.4 in~\cite{MR2352280} lead to, $\omega$-almost surely
\begin{equation*}
  \begin{split}
    DLS(x;J,\omega,\Lambda)&=\int_J\frac{\#\left\{E_j\in J;\ \nu(t)
        |\Lambda| (E_{j+1}(\omega,\Lambda)-E_j(\omega,\Lambda))\geq
        \nu_J(t)\cdot|J|\,x \right\}}{N(J,\omega,\Lambda)}\nu_J(t)dt
    \\&\vers_{|\Lambda|\to+\infty}\int_Je^{-\nu_J(t)\cdot|J|\,x}\nu_J(t)dt.
  \end{split}
\end{equation*}
This completes the proof of Theorem~\ref{thr:7}.\qed

\def\cprime{$'$} \def\cydot{\leavevmode\raise.4ex\hbox{.}} \def\cprime{$'$}

\end{document}